\definecolor{green-url}{RGB}{0,128,0}
\definecolor{blue-url}{RGB}{0,0,205}
\definecolor{red-url}{RGB}{227,0,34}
\definecolor{ashgrey}{rgb}{0.7, 0.75, 0.71}
\def\ZZ{ \mathbb{Z} }
\def\NN{ \mathbb{N} }
\newcommand{\cal}[1]{{\mathcal{#1}}}
\newcommand{\fk}[1]{{\mathfrak{#1}}}
\newcommand{\bff}[1]{{\mathbf{#1}}}
\newcommand{\defit}[1]{{\textsf{#1}}}
\newcommand{\vprod}{\sideset{}{\mkern -2mu\raisebox{0.2em}{$\vphantom{\prod}^v$}} \prod}
\newcommand{\Mod}[1]{\ (\mathrm{mod}\ #1)}
\numberwithin{equation}{section}
\theoremstyle{plain}
\newtheorem{lemma}{Lemma}[section]
\newtheorem{theorem}[lemma]{Theorem}
\newtheorem{proposition}[lemma]{Proposition}
\newtheorem{corollary}[lemma]{Corollary}
\theoremstyle{definition}
\newtheorem{definition}[lemma]{Definition}
\newtheorem{example}[lemma]{Example}
\newtheorem{examples}[lemma]{Examples}
\theoremstyle{remark}
\newtheorem{remark}[lemma]{Remark}
\renewcommand{\epsilon}{\varepsilon}
\renewcommand{\theta}{\vartheta}
\renewcommand{\phi}{\varphi}
\setlist[enumerate,1]{itemsep=0.05cm, label=\textup{(}\arabic*\textup{)}}
\setlist[enumerate,2]{itemsep=0.05cm, label=\textup{(}\roman*\textup{)}}
\setlist[itemize,1]{itemsep=0.05cm}
\title[Every f.g.  abelian group is the class group of a generalized cluster algebra]{Every finitely generated abelian group is \\ the class group of a generalized cluster algebra}
\author{Mara Pompili}
\address{University of Graz, Department of Mathematics and Scientific Computing, NAWI Graz, Heinrichstrasse 36, 8010 Graz, Austria}
\email{mara.pompili@uni-graz.at}
\thanks{This work was partly supported by the Austrian Science Fund (FWF), project 10.55776/DOC-183-N.}
\subjclass[2020]{Primary 13F60. Secondary 13F05, 13F15.}
\keywords{Cluster algebras. Generalized cluster algebras. LP algebras. Factorization theory. Krull domains. Class groups.}
\begin{document}
	\begin{abstract}
		We determine the class group of those generalized cluster algebras that are Krull domains. In particular, this provides a criterion for determining whether or not a generalized cluster algebra is a UFD. In fact, any finitely generated abelian group can be realized as the class group of a generalized cluster algebra. Additionally, we show that generalized cluster algebras are FF-domains and that their cluster variables are strong atoms. Finally, we examine the factorization and ring-theoretic properties of Laurent phenomenon algebras.3
	\end{abstract}
	\maketitle
	\section{Introduction }
	The \defit{class group} of a \defit{Dedekind domain} (a domain where every non-zero ideal factors uniquely into prime ideals) is an invariant that measures the failure of unique factorization. In a unique factorization domain (UFD), every element can be uniquely factored into atoms (also called irreducibles), up to order and units. However, not all domains exhibit unique factorization, and class groups are a tool that captures this phenomenon.
	If a Dedekind domain allows non-unique factorizations of its elements into atoms, the class group provides a way of classifying the different types of factorizations.
	Even if the class group is typically defined for {Dedekind domains}, the definition can be extended to \defit{Krull domains,} a higher dimensional generalization of Dedekind domains. The class group of a Krull domain $A$ is the abelian group of the equivalence classes of divisorial ideals of $A$, where two ideals are equivalent if they differ by a principal ideal. The structure of a Krull domain can be fully described (up to units) by its class group and the distribution of prime divisors within each class. Claborn's Realization Theorem \cite[Theorem 7]{Cla66} states that every abelian group is isomorphic to the
	class group of a Dedekind domain. Similar realization theorems, yielding commutative Dedekind domains which are more geometric, respectively
	number theoretic, in nature, were obtained by Leedham-Green \cite{LG72} and Rosen \cite{Ros73,Ros76}. For more recent research on this direction see for instance \cite{CG24, Cha22, Cla09, Per23, Sme17}.
	One of the goals of this paper is to show that for any finitely generated abelian group $G$, there exists a \defit{generalized cluster algebra} that has $G$ as its class group. 
	
	Generalized cluster algebras were introduced by Chekhov and Shapiro \cite{CS13} in 2014. Their idea was to generalize the construction of cluster algebras introduced by Fomin and Zelevinsky in their four-part series of foundational papers \cite{FZ02,FZ03_2,BFZ05,FZ07} (the paper \cite{BFZ05} is in cooperation with Berenstein). Generalized cluster algebras were introduced to study Teichmüller spaces of Riemann surfaces with orbifold points.
	Unlike the classical setting, where exchange relations are binomial, generalized cluster algebras allow multinomial exchange relations. In \cite[Theorem 2.5 and Theorem 2.7]{CS13} the authors show that generalized cluster algebras also possess the Laurent phenomenon and their finite type classification coincides with the one of cluster algebras. 
	Gekhtman, Shapiro, and Vainshtein \cite[Theorem 4.1]{GSV18} prove that a generalized upper cluster algebra coincides with its upper
	bounds under coprimality conditions. In Section \ref{section:3} we provide a proof of this fact using a more elementary argument.
	Later, Bai, Chen, Ding, and Xu \cite[Theorem 3.10]{BCDX20} prove that
	the conditions of acyclicity and coprimality close the gap between lower bounds and
	upper bounds associated to generalized cluster algebras, and thus obtaining the
	standard monomial bases of these algebras.

	Despite these and other similarities between cluster algebras and generalized cluster algebras, our study reveals a significant difference in their factorization properties, particularly in terms of their class groups. 
The class groups of cluster algebras have been previously studied in \cite{GELS19,P24}. In \cite{GELS19}, the authors show that the class group of a cluster algebra that is a Krull domain is always of the form $\ZZ^r$. Furthermore, in the case of acyclic cluster algebras, they provide an explicit formula for 
$r$ based on the initial data. In \cite{P24}, similar results are obtained for upper cluster algebras that are Krull domains. Specifically, the authors present an explicit calculation of the rank of the class group of full rank upper cluster algebras. Generalized cluster algebras have a more complex behavior: their class groups are still finitely generated abelian groups (see Theorem \ref{thm:classgroupgeneralcase}), but they can have torsion (Example \ref{ex:Z2}). In fact, Theorem \ref{thm:realization} shows that every finitely generated abelian group can be realized as the class group of a generalized cluster algebra. In addition, every class contains infinitely many prime divisors.
	
	For a classical cluster algebra that is a Krull domain only two scenarios are possible: either the algebra is a UFD, or all arithmetic invariants are infinite and any non-empty finite subset $L \subseteq \NN_{\ge 2}$ can be
	realized as a set of lengths of some element. This dichotomy is not true anymore for a generalized cluster algebra that is a Krull domain, since its class group can be finite, and hence arithmetical invariants can be finite too. For instance, we provide an example of a generalized cluster algebra that is not a UFD, but it is half-factorial, that is there exists an element of the algebra that has at least two different factorizations but the lengths of the factorizations of each element are always the same.
	
	In addition to the study of the class groups of generalized cluster algebras, in Section \ref{section:2} we show that generalized cluster algebras are FF-domains (domains in which every element has only finitely many divisors) and that cluster variables are strong atoms (elements such that their powers factor uniquely into atoms), as was done for cluster algebras in \cite[Section 2]{P24}. Strong atoms have been the focus of numerous recent studies; see, for instance, \cite{Ang22, FFNSW24, GK24, RW21}.
	
	Moreover, Section \ref{section:LP} is dedicated to the study of some ring-theoretical properties of Laurent phenomenon algebras, a class of rings introduced by Lam and Pylyavskyy \cite{LP16}, another generalization of cluster algebras.

	\vspace{0.5cm}
	\subsection*{Notations} We denote by $\NN$ the set of positive integers, and by $\NN_0$ the set of non-negative integers. If $a,b\in \ZZ$, we let $[a,b]=\{x\in \ZZ\mid a\le x\le b\}$ be the (discrete) interval from $a$ to $b$. A \defit{domain} $A$ is a non-zero commutative ring with identity without non-zero zero divisors. We denote by $A^\bullet$ the set of non-zero elements of $A$, by $A^\times$ the set of units (or invertible elements) of $A$, and by $\mathbf{q}(A)$ its quotient field. We say that two elements $a,b\in A$ are associated if $a=\epsilon b$ for some $\epsilon \in A^\times$.

	\section{Generalized cluster algebras}\label{section:2}
	For some reference about cluster algebras and generalized cluster algebras, see \cite{BCDX20, Kel10, Wil14}. 
    
	Let $n,m\in \NN_0$ with $n+m>0$, and let $R$ be the ring of integers $\ZZ$ or a field of characteristic 0. We will refer to the rational function field $\cal{F}$ in $n+m$ independent variables over $\bff{q}(R)$ as the \defit{ambient field}.
	A \defit{cluster} is a set $\bff{x}=\{x_{1},\ldots,x_{n},x_{n+1},\ldots,x_{n+m}\}\in \mathcal{F}$ of algebraically independent variables over $R$, and an \defit{extended cluster} is a cluster with a bipartition of its element. We will denote by $L_\bff{x}$ the Laurent polynomial ring $R[x_{1}^{\pm1},\ldots,x_{n+m}^{\pm1}]$. 
	\begin{definition}[Exchange matrix]
		A matrix $B=(b_{ij})\in \mathcal{M}_{n\times n}(\ZZ)$ is \defit{skew-symmetrizable} if there exists a diagonal matrix $D\in \mathcal{M}_{n\times n}(\NN)$ such that $DB$ is skew-symmetric. 
		An $(n+m)\times n$ integer matrix is an \defit{exchange matrix} if the submatrix supported on the first $n$ rows is skew-symmetrizable. 
	\end{definition}
	
	Let $B$ be a $(n+m)\times n$ exchange matrix. We denote by $\bff{d}$ a $n$-tuple $(d_1,\ldots,d_n)$, where $d_i$ is a positive integer such that $b_{ji}/d_i\in \ZZ$ for every $i\in [1,n]$ and $j\in [1,n+m]$. We will write $\beta_{ji}:=b_{ji}/d_i.$
	
	\begin{definition}[Generalized seed]
		A \defit{generalized seed} of rank $n$ in $\cal{F}$ is a tuple $(\bff{x},\rho,B)$ where 
		\begin{itemize}
			\item  $\bff{x}=\{x_1,\ldots,x_n,\ldots,x_{n+m}\}$ is an {extended cluster}: the elements $\{x_1,\ldots,x_n\}$ are called \defit{cluster variables}, and the elements of $\{x_{n+1},\ldots,x_{n+m}\}$ are called \defit{frozen variables};
			\item $B=(b_{ij})\in \mathcal{M}_{(n+m)\times n}(\ZZ)$ is an exchange matrix;
			\item $\rho=\{\rho_i\mid i\in [1,n]\}$ is the set of \defit{strings}, where $\rho_i=\{\rho_{i,0},\ldots,\rho_{i,d_i}\}$ with $\rho_{i,0}=\rho_{i,d_i}=1$, and $\rho_{ij}$ is a monomial in $R[x_{n+1},\cdots,x_{n+m}]$ for every $j\in [1,d_i-1]$.
		\end{itemize}
	\end{definition}
	Let $(\bff{x},\rho,B)$ be a generalized seed. The \defit{directed graph} $\Gamma(\bff{x},\rho,B)$ is the graph whose vertices are all $i\in [1,n]$ and such that there is an edge \begin{tikzcd}[column sep=small]
		i \ar[r] & j
	\end{tikzcd} if and only if $b_{ij}>0$.

	For $x\in \mathbb{R}$, define the function $[x]_+:=x$ if $x\ge 0$, and $[x]_+=0$ otherwise. 
	
	\begin{definition}[Mutation of generalized seeds]
		For $i\in[1,n]$ the \defit{mutation} of a generalized seed $(\bff{x},\rho,B)$ in direction $i$ is a triple $\mu_i(\bff{x},\rho,B)=(\bff{x}_i,\rho',B_i)$ with
		\begin{itemize}
			\item $\bff{x}_i=(\bff{x}\setminus \{x_i\})\cup \{x_i'\}$ where $$x_i'=\frac{1}{x_i}\left(\sum_{j=0}^{d_i}\rho_{i,j}\prod_{k=1}^{n+m}x_k^{j[\beta_{ki}]_++(d_i-j)[-\beta_{ki}]_+}\right)\in \mathcal{F};$$
			\item $\rho'=(\rho\setminus \{\rho_i\})\cup \{\rho_i'\},$ where $\rho'_{ij}=\rho_{i,d_i-j}$ for $j\in[0,d_i]$;
			\item $B_i=(b'_{kl})$ with \[b_{kl}'=\begin{cases}-b_{kl}& \text{if $k=i$ or $l=i$;}\\ 	b_{kl}+([b_{il}]_+b_{ki} + b_{il}[-b_{ki}]_+) & \text{otherwise.} \end{cases} \]
		\end{itemize}The polynomials \[f_i:=\sum_{j=0}^{d_i}\rho_{i,j}\prod_{k=1}^{n+m}x_k^{j[\beta_{ki}]_++(d_i-j)[-\beta_{ki}]_+}\in R[\bff{x}]\] are called the \defit{exchange polynomials} (associated to the seed $(\bff{x},\rho,B)$).
	\end{definition}
	\begin{definition}\label{def:coprimegenseed}
		A generalized seed $(\bff{x},\rho,B)$ is 
		\begin{enumerate}
			\item 	\defit{coprime} if the exchange polynomials $f_{1},\ldots,f_{n}$ are pairwise coprime in $R[\bff{x}]$; 
			\item  \defit{acyclic} if the directed graph $\Gamma(\bff{x},\rho,B)$ does not contain directed cycles.
		\end{enumerate}
	\end{definition} Notice that, since $d_i$ divides $b_{ji}$ for every $j\in[1,n+m]$, each $d_i$ divides $b'_{ji}$ for every $j\in[1,n+m]$. Moreover, if $d_i=1$ for every $i\in[1,n]$ we get the classical mutation of a classical seed, as in \cite{FZ02}. 

    \begin{example}\label{ex:genmut}
        Let \[B=\begin{pmatrix}
            0 & 2 \\ -3 & 0 \\ 0 & -4
        \end{pmatrix}, \quad \bff{x}=\{x_1,x_2,x_3\},\quad (d_1,d_2)=(3,2),\quad \rho_1=\{1,3,3,1\},\quad \rho_2=\{1,2,1\},\] so that $(\bff{x},\rho,B)$ is a generalized seed. Notice that in this case $x_3$ is a frozen variable, thus we can mutate only in direction $1$ and $2$. Therefore, mutations only change the sign of the principal part of the matrix, i.e. the submatrix of $B$ supported on the first 2 rows. Recall that we denote by $\beta_{ji}$ the integers $b_{ji}/d_i$. The exchange relations are given by 
        \begin{equation*}
            \begin{split}
            f_1=x_1x_1'&=\prod_{k=1}^3x_k^{3[-\beta_{k1}]_+}+3\prod_{k=1}^3x_k^{[\beta_{k1}]_++2[-\beta_{k1}]_+}+3\prod_{k=1}^3x_k^{2[\beta_{k1}]_++[-\beta_{k1}]_+}+\prod_{k=1}^3x_k^{3[\beta_{k1}]_+}\\&=x_1^3+3x_1^2+3x_1+1,
            \end{split}
        \end{equation*} and
        \begin{equation*}
                f_2=x_2x_2'=\prod_{k=1}^3x_k^{2[-\beta_{k2}]_+}+2\prod_{k=1}^3x_k^{[\beta_{k2}]_++[-\beta_{k2}]_+}+\prod_{k=1}^3x_k^{3[\beta_{k2}]_+}=x_2^2+2x_2x_3^2+x_3^4.
        \end{equation*} 
    \end{example}
    \begin{examples}
    \begin{enumerate}
        \item The seed in Example \ref{ex:genmut} is coprime ($f_1,$ and $f_2$ do not share any non-trivial factor), and acyclic, since $\Gamma(\bff{x},\rho,B)= 2 \overset{2}{\rightarrow} 1$ and it does not contain directed cycles.
        \item Let \[B=\begin{pmatrix}
            0 & 1 & 0 \\ -1 & 0 & 1 \\ 0 & -1 & 0
        \end{pmatrix}, \quad \bff{x}=\{x_1,x_2,x_3\}\] be a seed. Then \[f_1=x_2^2+x_3^2,\quad f_2=x_1^2+x_3^2,\quad f_3=x_2^2+x_3^2.\] Then the seed $(\bff{x},B)$ is not coprime ($f_1=f_3$), but it is acyclic ($\Gamma(\bff{x},B)=1\rightarrow 2\rightarrow 3$).
        \item Let \[B=\begin{pmatrix}
            0 & 2 & -2 \\ -2 & 0 & 2 \\ 2 & -2 & 0
        \end{pmatrix}, \quad \bff{x}=\{x_1,x_2,x_3\}\] be a seed. Then \[f_1=x_2+1,\quad f_2=x_1+x_3,\quad f_3=x_2+1.\] Then the seed $(\bff{x},B)$ is coprime if $R=\ZZ$, but it is not acyclic as $$\Gamma(\bff{x},B)=\begin{tikzcd}[row sep=50pt]
            & 1 \ar[dr,bend left=15] \ar[dr, bend right=15] & \\
            3 \ar[ur, bend left=15] \ar[ur, bend right=15] & & 2. \ar[ll, bend left=15] \ar[ll, bend right=15]
        \end{tikzcd}$$
        \end{enumerate}
    \end{examples}
    
    One can prove that $\mu_i(\bff{x},\rho,B)$ is a generalized seed with the same ambient field as $(\bff{x},\rho,B)$ and that $(\mu_i\circ \mu_i)(\bff{x},\rho,B)=(\bff{x},\rho,B).$ 
	Mutations induce an equivalence relation on seeds. We say that two generalized seeds $(\bff{x},\rho,B)$ and $(\mathbf{y},\nu,C)$ are \defit{mutation-equivalent} if there exist $i_1,\dots,i_k\in [1,n]$ such that $(\bff{y},\nu,C)=(\mu_{i_1}\circ\cdots\circ \mu_{i_k})(\bff{x},\rho,B)$.
	In this case, we write $(\bff{x},\rho,B)\sim (\mathbf{y},\nu,C),$ or, if no confusion can arise, $\bff{x}\sim\bff{y}$. Denote by $\mathcal{M}(\mathbf{x},\rho,B)$ the mutation equivalence class of $(\mathbf{x},\rho,B)$ and by $\mathcal{X}=\mathcal{X}(\mathbf{x},\rho,B)$ the set of all cluster variables appearing in $\mathcal{M}(\mathbf{x},\rho,B).$
	
	\begin{definition}
		Let $(\bff{x},\rho,B)$ be a generalized seed. 
        \begin{enumerate}
            \item The \defit{generalized cluster algebra} associated to $(\mathbf{x},\rho,B)$ is the $R$-algebra 
		$$\cal{A}(\mathbf{x},\rho,B)=R[x_{n+1}^{\pm1},\ldots,x_{n+m}^{\pm1}][x\mid x\in \mathcal{X}].$$ 
            \item The \defit{generalized upper cluster algebra} associated to $(\mathbf{x},\rho,B)$ 
		is the intersection \[\cal{U}(\bff{x},\rho,B)=\bigcap_{\bff{x}\sim \bff{y}}L_{\bff{y}}\] 
		of the Laurent polynomials rings associated to each cluster.
        \end{enumerate}
		The elements $x\in \mathcal{X}$ are the \defit{cluster variables} of $\cal{A}(\mathbf{x},\rho,B)$ and
		the elements $x_{n+1}^{\pm1},\ldots,x_{n+m}^{\pm1}$ are the \defit{frozen variables.}
	\end{definition}

	\begin{theorem}[{\cite[Theorems 2.5 and 2.7]{CS13}}] Let $(\bff{x},\rho,B)$ be a generalized seed.
    \begin{enumerate}[label=\textup(\normalfont{\arabic*}\textup)]
        \item Every cluster variable can be expressed as a Laurent polynomial in every cluster, or equivalently $\cal{A}(\bff{x},\rho,B)\subseteq \cal{U}(\bff{x},\rho,B).$
        \item The generalized cluster algebra $\cal{A}(\bff{x},\rho, B)$ has only finitely many cluster variables if and only if the directed graph $\Gamma(\bff{x},\rho, B)$ is a Dynkin diagram of finite type.
    \end{enumerate}
	\end{theorem}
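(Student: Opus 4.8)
The plan is to reduce both assertions to the corresponding facts for ordinary cluster algebras, the bridge being an observation about the shape of the exchange polynomials. For each $i\in[1,n]$ the polynomial $f_i=\sum_{t=0}^{d_i}\rho_{i,t}\prod_{k=1}^{n+m}x_k^{t[\beta_{ki}]_++(d_i-t)[-\beta_{ki}]_+}$ carries, with coefficient $1$, the two ``extreme'' monomials $\prod_k x_k^{[-b_{ki}]_+}$ (the term $t=0$) and $\prod_k x_k^{[b_{ki}]_+}$ (the term $t=d_i$), because $\rho_{i,0}=\rho_{i,d_i}=1$ and $d_i[\pm\beta_{ki}]_+=[\pm b_{ki}]_+$; these are exactly the two monomials of the binomial exchange polynomial of the classical seed on $B$, whereas the strings $\rho_{i,t}$ only decorate the intermediate monomials $0<t<d_i$, whose exponent vectors lie componentwise between the two extremes. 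Two immediate consequences: $f_i$ involves no $x_i$ (since $b_{ii}=0$), and $f_i$ is divisible by no variable $x_k$ (divisibility would force $b_{ki}>0$ and $b_{ki}<0$). A preliminary bookkeeping step, using $\mu_i\circ\mu_i=\mathrm{id}$ together with this description, is to check that the direction-$i$ exchange polynomial of $\mu_i(\bff{x})$ is again $f_i$; hence the exchange relation is reversible, $x_i'=f_i/x_i\in L_\bff{x}$ and $x_i=f_i/x_i'\in L_{\mu_i(\bff{x})}$, all frozen variables being invertible in every Laurent ring.

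For part (1) the plan is to run the Caterpillar Lemma of Fomin and Zelevinsky. Given a cluster variable $z$ and a cluster $\bff{y}$ with $\bff{x}\sim\bff{y}$, one wants $z\in L_\bff{y}$; rerouting inside the $n$-regular tree of generalized seeds reduces this to a two-colour caterpillar, and the hypotheses of the lemma that must be verified for the string-decorated $f_i$ are: (a) each $f_i$ is primitive, which is the second consequence above; (b) $f_i$ is coprime to the relevant transform of $f_j$; and (c) a transverse-mutation compatibility, namely that the direction-$i$ exchange polynomial of $\mu_j(\bff{x})$, $j\ne i$, is --- up to a monomial factor and a factor coprime to the variables involved --- the result of substituting $x_j\mapsto f_j/x_j$ in $f_i$. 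I expect (c) --- which amounts to tracking how the exponents $t[\beta_{ki}]_++(d_i-t)[-\beta_{ki}]_+$ and the coefficients $\rho_{i,t}$ transform under $\mu_j$, and checking that the intermediate monomials do not spoil the divisibility --- to be the main obstacle; once it is in place the caterpillar induction runs exactly as in the binomial case and gives $\mathcal{X}\subseteq L_\bff{y}$ for every cluster $\bff{y}$. Since $\cal{A}(\bff{x},\rho,B)=R[x_{n+1}^{\pm1},\dots,x_{n+m}^{\pm1}][\mathcal{X}]$ and $\cal{U}(\bff{x},\rho,B)$ is a subring of every $L_\bff{y}$ containing the frozen variables, this yields $\cal{A}(\bff{x},\rho,B)\subseteq\cal{U}(\bff{x},\rho,B)$.

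For part (2) the plan is to attach to each cluster variable $z$ its denominator vector $\delta(z)\in\ZZ^n$ relative to the initial cluster (well defined by part (1)) and to observe, again from the description of the $f_i$, that these vectors mutate by the same piecewise-linear rule as the denominator vectors of the classical cluster algebra with exchange matrix $B$: only the extreme monomials of $f_i$ can influence a denominator, and they carry precisely the classical exponents, so neither the strings $\rho$ nor the symmetrizer $\bff{d}=(d_1,\dots,d_n)$ enter. Hence the exchange-graph combinatorics of $\mathcal{M}(\bff{x},\rho,B)$, and in particular the finiteness of $\mathcal{X}$, depend only on the mutation class of $B$, and one can import the classical finite type classification. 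For the ``if'' direction one uses the root-system model: when the mutation class of $B$ contains an orientation of a Dynkin diagram of finite type, the map $z\mapsto\delta(z)$ is injective with image the finite set of almost positive roots of the associated root system, so $\mathcal{X}$ is finite. For the ``only if'' direction one first notes that if $\mathcal{X}$ is finite then the seed is $2$-finite --- every rank-$2$ principal submatrix $\left(\begin{smallmatrix}0&b\\-c&0\end{smallmatrix}\right)$ arising anywhere in the mutation class has $bc\le 3$ --- because a rank-$2$ generalized cluster algebra with $bc\ge 4$ already has infinitely many cluster variables; the only genuinely new computation here is the analysis of the rank-$2$ recursion $y_{m-1}y_{m+1}=f_m$ with multinomial $f_m$, where one checks that the denominator degrees of the $y_m$ grow without bound precisely when $bc\ge 4$, the string coefficients being irrelevant to this growth. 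The classical combinatorial classification of $2$-finite diagrams then identifies the mutation class of $B$ with that of a Dynkin diagram of finite type, completing the equivalence. The delicate point in part (2) is transporting to the generalized setting the injectivity of $z\mapsto\delta(z)$ in finite type, i.e.\ the root-system parametrization of the cluster variables.
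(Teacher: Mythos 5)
The paper does not actually prove this theorem: it is quoted from Chekhov and Shapiro \cite{CS13} (their Theorems 2.5 and 2.7), so there is no in-paper argument to compare against. Your roadmap is essentially the strategy of \cite{CS13}: adapt the Fomin--Zelevinsky caterpillar lemma for the Laurent phenomenon, and reduce the finite-type classification to the classical one by observing that the two extreme monomials of $f_i$ (the terms $t=0$ and $t=d_i$, which carry coefficient $1$ because $\rho_{i,0}=\rho_{i,d_i}=1$ and whose exponents $d_i[\pm\beta_{ki}]_+=[\pm b_{ki}]_+$ are exactly the classical ones) govern everything, while the string-decorated intermediate monomials sit convexly between them. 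Your preliminary bookkeeping --- $f_i$ independent of $x_i$, divisible by no variable, and recovered unchanged after mutating in direction $i$ once the string reversal $\rho'_{i,j}=\rho_{i,d_i-j}$ and the sign flip of the $i$-th column are taken into account --- is correct.

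As written, however, the proposal is not a proof: the two steps you yourself flag as ``the main obstacle'' and ``the delicate point'' are precisely where all the content lies, and neither is carried out. For part (1), your condition (c) --- that the direction-$i$ exchange polynomial of $\mu_j(\bff{x})$ is, up to a unit Laurent monomial and a factor supported away from the relevant variables, the substitution $x_j\mapsto f_j/x_j$ in $f_i$ --- \emph{is} the caterpillar verification; without tracking explicitly how the exponents $t[\beta_{ki}]_++(d_i-t)[-\beta_{ki}]_+$ and the strings transform under the matrix mutation rule, and checking the required coprimality in the Laurent ring, the induction does not run. For part (2), two separate verifications are missing: (i) the claim that denominator vectors mutate by the classical piecewise-linear rule needs a no-cancellation argument (the componentwise maximum over the monomials of $f_i$ is indeed attained at an extreme monomial, since the exponent vectors are affine in $t$, but the coefficients $\rho_{i,t}$ are monomials in the frozen variables with coefficients in $R$ and cancellation must be excluded); and (ii) the ``if'' direction requires proving periodicity of the generalized rank-$2$ recursion $y_{m-1}y_{m+1}=f_m(y_m)$ for $bc\le 3$ with nontrivial strings (for instance $f=1+\rho y+y^2$ in type $B_2$), which does not follow formally from the classical root-system parametrization and must be checked by hand, as is done in \cite{CS13}. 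Until (c), (i) and (ii) are supplied, the argument is a plausible plan rather than a proof.
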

    \begin{example}[{\cite[Theorem 2.7]{CS13}}]
        Let $B=\begin{pmatrix}
            0 & 1 \\ -2 & 0
        \end{pmatrix}.$ Let $(d_1,d_2)=(2,1),$ and $\rho_1=\{1,a,1\},$ $a\in \ZZ$, and $\rho_2=\{1,1\}.$ Let $\bff{x}=\{x_1,x_2\}$ be a cluster. Let $x_k$ with ${k\in \ZZ}$ be the cluster variable obtained mutating $x_{k-2}$ in direction $1$, resp. $2$, if $k$ is odd, resp. if $k$ is even. Then $\{x_k\}_{k\in \ZZ}$ satisfy the exchange relations  \begin{equation*}
            x_{k-1}x_{k+1}=\begin{cases}
                1+x_k & \text{if } k\in 2\ZZ\\ 1+hx_k+x_k^2 & \text{otherwise}.
            \end{cases}
        \end{equation*}
        By a direct calculation, we get that the sequence $\{x_k\}_{k\in \ZZ}$ is $6$-periodic. Therefore \[\cal{A}(\bff{x},\rho,B)=R[x_1,x_2,x_3,x_4,x_5,x_6].\] Moreover, notice that the seed is coprime and acylic.
    \end{example}

	\begin{definition} Let $(\bff{x},\rho,B)$ be a generalized seed. For every $i\in [1,n]$ denote by $\mathbf{x}_i$ the seed $\{x_1,\ldots,x_i',\ldots,x_{n+m}\}$ obtained mutating $\mathbf{x}$ in direction $i$.
		\begin{enumerate}[label=(\roman*)]
			\item   The \defit{upper bound} associated to $(\bff{x},\rho,B)$ is the domain $S_{\bff{x}}=\bigcap_{i=0}^{n}L_{\bff{x}_{i}}$.
			\item The \defit{starfish product} associated to $(\bff{x},\rho,B)$ is the domain $P_t=\prod_{i=0}^nL_{\bff{x}_i}$.
		\end{enumerate}
	\end{definition}
	Note that $\cal{A}\subseteq \cal{U} \subseteq S_{\bff{x}}$ and $\cal{A}$, $\cal{U}\hookrightarrow P_\bff{x}$ for every cluster $\bff{x}$.

	\subsection{Factorization properties of generalized cluster algebras} 
	Let us recall some definitions. For more details about factorization theory, see for instance  \cite{GH06}.
	\begin{definition}Let $A$ be a domain.
		\begin{enumerate}
			\item A non-unit $u\in A^\bullet$ is an \defit{atom}, or an \defit{irreducible element}, if $u=ab$ with $a,b\in A$ implies $a\in A^\times$ or $b\in A^\times$.
			\item An atom $u\in A^\bullet$ is a \defit{strong atom}, or an \defit{absolutely irreducible element}, if for all positive integers $n\ge 1$ the only factorization (up to associates) of $u$ is  $u^n=u\cdots u.$  
			\item A non-unit $p\in A^\bullet$ is a \defit{prime element} if $p$ dividing $ab$ with $a,b\in A$ implies that $p$ divides one of $a$ and $b$.
		\end{enumerate}
	\end{definition}
	A domain $A$ is \defit{atomic} if every non-zero non-unit element of $A$ can be written as a finite product of atoms. It is an \defit{FF-domain} (finite factorization domain) if it is atomic and every non-zero non-unit factors into atoms in only finitely many ways up to order and associates. Finally, $A$ is a \defit{factorial} domain (or a \defit{UFD}) if it is atomic and every non-unit of $A$ factors in a unique way up to order and associates.
	
	As for cluster algebras, the following properties hold. 
	\begin{proposition}\label{prop:unit}
		Let $(\bff{x},\rho,B)$ be a generalized seed, and $\cal{A}$, $\cal{U}$ be the generalized cluster algebra, and the generalized upper cluster algebra associated to $(\bff{x},\rho,B), $ respectively. Then:
		\begin{enumerate}[label=\normalfont{(\arabic*)}]
			\item The units of $\cal{A}$ and $\cal{U}$ are the set $\{\epsilon x_{n+1}^{a_{n+1}}\cdots x_{n+m}^{n+m}\mid \epsilon\in R^{\times}, a_i\in \ZZ\}.$
			\item Every cluster variable is a strong atom of $\cal{A}$ and $\cal{U}.$ 
			\item Two cluster variables $x$ and $y$ are associate if and only if $x=y$.
			\item The following statements are equivalent 
			\begin{enumerate}[label=\textup(\normalfont{\alph*}\textup)]
				\item $\cal{A}$ \textup(resp. $\cal{U}$\textup) is factorial;
				\item every cluster variable is prime in $\cal{A}$ \textup(resp., $\cal{U}$\textup);
				\item the cluster variables $x_1,\ldots,x_n$ are prime in $\cal{A}$ \textup(resp., $\cal{U}$\textup).
			\end{enumerate}
			\item Assume that $\cal{A}$ \textup(resp., $\cal{U}$\textup) is factorial, then
			\begin{enumerate}[label=\normalfont{(\roman*)}]
				\item the exchange polynomials associated to a cluster are pairwise distinct;
				\item all exchange polynomials are irreducible. 
			\end{enumerate}
		\end{enumerate}
	\end{proposition}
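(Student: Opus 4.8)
\emph{Proof proposal.} The plan is to adapt, essentially line by line, the proof of the corresponding statement for ordinary cluster algebras from \cite{P24}; the one point that needs a remark specific to the generalized setting is the elementary fact that \emph{no exchange polynomial $f_i$ is a monomial}. I would establish this first: since $d_i\ge 1$ and $\rho_{i,0}=\rho_{i,d_i}=1$, the polynomial $f_i$ contains with coefficient $1$ the two terms $\prod_k x_k^{d_i[-\beta_{ki}]_+}$ and $\prod_k x_k^{d_i[\beta_{ki}]_+}$, and these are distinct whenever the $i$-th column of $B$ is nonzero (which one may assume), so $f_i$ is neither a monomial nor a unit of any $L_{\bff{x}}$. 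Granting this, part (1) goes as follows. The inclusion $\supseteq$ is clear from the definition of $\cal{A}$ and from $\cal{U}\subseteq L_{\bff{x}}$ for every cluster. Conversely, a unit $u$ of $\cal{A}$ (or $\cal{U}$) is a unit of every $L_{\bff{x}}$, hence an $R^\times$-multiple of a Laurent monomial; writing $u$ in a cluster $\bff{x}$ and in the adjacent cluster $\bff{x}_i$, substituting $x_i=f_i/x_i'$, and using that $f_i$ is not a monomial forces the exponent of $x_i$ in $u$ to vanish, for every $i\in[1,n]$.

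For part (2) I would first show that a cluster variable $x_i$ is an atom: given $x_i=ab$ in $\cal{A}$, one of $a,b$ — say $b$ — is a Laurent monomial in the UFD $L_{\bff{x}}$; since $a$ and $b$ lie in every $L_{\bff{x}_j}$, substituting $x_j=f_j/x_j'$ and using again the non-monomiality of the $f_j$ forces, for $j\in[1,n]\setminus\{i\}$, the exponent of $x_j$ in $b$ and in $a$ to be $0$, and the exponent of $x_i$ in $b$ to lie in $\{0,1\}$; in either case one of $a,b$ is a monomial in the frozen variables, hence a unit of $\cal{A}$ by part (1). For the strong atom property, write $x_i^N=u_1\cdots u_r$ with the $u_j$ atoms; unique factorization in $L_{\bff{x}}$ makes every $u_j$ a Laurent monomial times a non-negative power $x_i^{s_j}$ with $\sum_j s_j=N$, the previous argument removes all non-frozen variables except $x_i$, and the fact that each $u_j$ is a non-unit atom forces $s_j=1$; hence every atomic factorization of $x_i^N$ consists of $N$ associates of $x_i$. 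The same argument applies to $\cal{U}$, since $\cal{U}\subseteq L_{\bff{x}}$ for every cluster and $\cal{U}^\times=\cal{A}^\times$.

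For part (3): if $x=\epsilon y$ with $x,y\in\cal{X}$ and $\epsilon\in\cal{A}^\times$, then by part (1) $\epsilon$ is a Laurent monomial in the frozen variables, which belong to every cluster, so in a cluster $\bff{x}$ containing $x$ the Laurent expansion of $y$ is a single monomial; since the Laurent expansion of a cluster variable in $\bff{x}$ is a single monomial only if that variable lies in $\bff{x}$ — the standard fact, valid for generalized cluster algebras as well — we get $y\in\bff{x}$, and comparing monomials gives $x=y$. For part (4): (a)$\Rightarrow$(b) holds because atoms are prime in a UFD and cluster variables are atoms by part (2), and (b)$\Rightarrow$(c) is trivial. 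For (c)$\Rightarrow$(a), localize $\cal{A}$ at the multiplicative set $S$ generated by $x_1,\dots,x_n$ (the frozen variables are already invertible); since every cluster variable is a Laurent polynomial in $\bff{x}$, this localization equals $L_{\bff{x}}=R[x_1^{\pm1},\dots,x_{n+m}^{\pm1}]$, a UFD. Because $\cal{A}$ is atomic (generalized cluster algebras being FF-domains) and $S$ is generated by the primes $x_1,\dots,x_n$, Nagata's criterion (see \cite{GH06}) yields that $\cal{A}$ is factorial; for $\cal{U}$ one uses $\cal{A}\subseteq\cal{U}\subseteq L_{\bff{x}}$ to get $\cal{U}_S=L_{\bff{x}}$ too.

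Finally, for part (5) assume $\cal{A}$ (resp.\ $\cal{U}$) factorial, so by part (4) every cluster variable is prime, and recall that $f_i=x_ix_i'$ with $x_i'\in\cal{X}$ and $f_i\in R[\bff{x}]$ not involving $x_i$ (as $b_{ii}=0$). For (i), if $f_i=f_j$ with $i\ne j$ then $x_ix_i'=x_jx_j'$, so unique factorization together with part (3) and $x_i\ne x_j$ forces $x_i=x_j'$, whence $\bff{x}_j=(\bff{x}\setminus\{x_j\})\cup\{x_i\}$ would not be a cluster since $x_i\in\bff{x}\setminus\{x_j\}$ already. For (ii), if $f_i=gh$ in $R[\bff{x}]$ with $g,h$ non-units, then $g$ and $h$ also avoid $x_i$; from $x_ix_i'=gh$ and the primality in $\cal{A}$ of $x_i$ and of $x_i'$, one of $g,h$ turns out to be a unit of $\cal{A}$, i.e.\ a monomial in the frozen variables by part (1), and a short case analysis — using that $f_i$ avoids $x_i$ and that, by inspection of its $j=0$ and $j=d_i$ terms, no nontrivial monomial in the frozen variables divides it — rules this out, so $f_i$ is irreducible in $R[\bff{x}]$. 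I expect the real work to lie in part (2) and in the implication (c)$\Rightarrow$(a) of part (4): the former reduces to the non-monomiality of the exchange polynomials — immediate for binomial, but not for multinomial, exchange relations — followed by careful bookkeeping with the rings $L_{\bff{x}_j}$, while the latter rests on recognizing that inverting $x_1,\dots,x_n$ collapses both $\cal{A}$ and $\cal{U}$ onto the Laurent ring $L_{\bff{x}}$. Part (3) additionally leans on the structural fact that a cluster variable outside a cluster is not a monomial in it.
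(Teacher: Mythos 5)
Your proposal is correct and follows essentially the same route as the paper, whose proof of this proposition consists entirely of pointers to the classical arguments in \cite{GLS13}, \cite[Corollary 2.5]{P24} and \cite[Corollary 1.23]{GELS19}; you have simply carried those arguments out, correctly isolating the one genuinely new ingredient, namely that a multinomial exchange polynomial $f_i$ is never a Laurent monomial because its $j=0$ and $j=d_i$ terms both occur with coefficient $\rho_{i,0}=\rho_{i,d_i}=1$ and have distinct exponent vectors whenever the $i$-th column of $B$ is nonzero. The only caveat, which applies equally to the paper's own ``repeat the arguments'' proof, is that your part (3) still leans on transferring to the generalized setting the fact that a cluster variable whose Laurent expansion in a cluster is a single monomial must belong to that cluster.
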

	\begin{proof}
		For (1), (3), (5) repeat the arguments in \cite{GLS13}, for (2), repeat the argument in \cite[Corollary 2.5]{P24}, and for (4) repeat the argument in \cite[Corollary 1.23]{GELS19}
	\end{proof}

	\begin{proposition}\label{prop:FF}
		Let $(\bff{x},\rho, B)$ be a generalized seed, and $A$ be either the generalized cluster algebra $\cal{A}(\bff{x},\rho, B)$ or the generalized upper cluster algebra $\cal{U}(\bff{x},\rho, B)$. Then $A$ is an FF-domain. In particular, it is atomic.
	\end{proposition}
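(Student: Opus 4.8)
The plan is to transfer finiteness properties from the Laurent polynomial ring $L_\bff{x}$, into which $A$ embeds. First I would fix the ambient structure. By the generalized Laurent phenomenon \cite{CS13} we have $\cal{A}\subseteq\cal{U}\subseteq L_\bff{y}$ for every cluster $\bff{y}$; in particular $A\subseteq L_\bff{x}$ and $A\subseteq L_{\bff{x}_i}$, where $\bff{x}_i=\mu_i(\bff{x})$. Since $R$ is $\ZZ$ or a field of characteristic $0$, the ring $L_\bff{x}=R[x_1^{\pm1},\dots,x_{n+m}^{\pm1}]$ is a localization of a polynomial ring over a UFD, hence a UFD; consequently every nonzero element of $L_\bff{x}$ has only finitely many divisors up to associates, and $L_\bff{x}^\times=R^\times\times\langle x_1,\dots,x_{n+m}\rangle$. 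Comparing this with $A^\times=R^\times\times\langle x_{n+1},\dots,x_{n+m}\rangle$ (Proposition~\ref{prop:unit}(1)), I observe that two elements of $A$ that are associated in $L_\bff{x}$ differ, up to a unit of $A$, by a monomial $x_1^{v_1}\cdots x_n^{v_n}$ with $v_i\in\ZZ$, and that such a monomial lies in $A^\times$ exactly when $v_1=\dots=v_n=0$.

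Recall that a domain is an FF-domain precisely when every nonzero element has only finitely many divisors up to associates (see \cite{GH06}), and that this already forces atomicity. Hence it suffices to bound, for a fixed $a\in A^\bullet$, the divisors of $a$ in $A$ up to $A$-associates. Every $A$-divisor of $a$ is an $L_\bff{x}$-divisor of $a$, and $a$ has only finitely many $L_\bff{x}$-associate classes of divisors; so, fixing an $L_\bff{x}$-divisor $d_0$ of $a$ and writing $a=d_0 h_0$ with $h_0\in L_\bff{x}$, the observation above reduces the problem to showing that the set
\[
S=\{\,v\in\ZZ^n \mid x_1^{v_1}\cdots x_n^{v_n}\,d_0\in A \ \text{ and }\ x_1^{-v_1}\cdots x_n^{-v_n}\,h_0\in A\,\}
\]
is finite: indeed, the $A$-divisors of $a$ lying in the $L_\bff{x}$-class of $d_0$ are, up to $A$-associates, in bijection with $S$.

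The heart of the argument is the finiteness of $S$, and this is the step I expect to be the main obstacle. Suppose $S$ is infinite; then some coordinate is unbounded on it, and, after swapping the roles of $d_0$ and $h_0$ and relabelling if necessary, there are $v^{(t)}\in S$ with $v^{(t)}_1\to+\infty$ as $t\to\infty$. Here I would pass to the cluster $\bff{x}_1$, in which $x_1=f_1/x_1'$ with $f_1$ the exchange polynomial of $(\bff{x},\rho,B)$ in direction $1$ — a polynomial in $x_2,\dots,x_{n+m}$ that does not involve $x_1'$. Reading off the mutation formula, the two extreme monomials of $f_1$ (from $j=0$ and from $j=d_1$) share no variable, so $f_1$ is not a monomial, except in the degenerate case where $f_1$ is a nonzero constant of a field; but then $x_1\in A^\times$, so coordinate $1$ does not occur in the monomial ambiguity and can be ignored. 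Hence $f_1$ has an irreducible factor $\pi$ of positive degree; $\pi$ remains prime in the Laurent ring $L_{\bff{x}_1}$, in which every element has non-negative $\pi$-adic valuation, so $v_\pi(z)\ge0$ for all $z\in A$, while $v_\pi(x_1)=v_\pi(f_1)>0$ (as $v_\pi(x_1')=0$) and $v_\pi(x_k)=0$ for $2\le k\le n$. Applying $v_\pi$ to the two conditions defining $S$ gives $-v_\pi(d_0)\le v_\pi(x_1)\,v^{(t)}_1\le v_\pi(h_0)$, so, since $v_\pi(x_1)>0$, the integers $v^{(t)}_1$ lie in a bounded interval (of length $v_\pi(a)/v_\pi(x_1)$, using $v_\pi(d_0)+v_\pi(h_0)=v_\pi(a)$), contradicting $v^{(t)}_1\to+\infty$. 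Therefore $S$ is finite, $a$ has only finitely many divisors in $A$ up to associates, and $A$ is an FF-domain, in particular atomic.

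Every ingredient used — the Laurent phenomenon, the fact that $L_\bff{x}$ is a UFD, and Proposition~\ref{prop:unit}(1) — is equally available for $\cal{U}$, so the same argument handles both $\cal{A}$ and $\cal{U}$ at once.
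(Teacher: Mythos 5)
Your proof is correct, but its key step is genuinely different from the paper's. The paper also begins by transferring finiteness from a factorial overring, but it uses the starfish product $P_\bff{x}=\prod_{i=0}^n L_{\bff{x}_i}$: an infinite family of pairwise non-associated $A$-divisors of $a$ would contain two that are associated simultaneously in every $L_{\bff{x}_i}$, and comparing the resulting monomial identities between the cluster $\bff{x}$ and each neighbour $\bff{x}_i$ (using that ${x_i'}$ is not a Laurent monomial in $\bff{x}$) forces the exponents of $x_1,\dots,x_n$ to vanish, so the two divisors were already $A$-associated --- a contradiction. You instead work in the single ring $L_\bff{x}$, parametrize the $A$-divisors inside one $L_\bff{x}$-associate class by monomial twists $v\in\ZZ^n$, and bound each coordinate $v_i$ via the $\pi$-adic valuation attached to a prime factor $\pi$ of the exchange polynomial $f_i$ viewed in the neighbouring ring $L_{\bff{x}_i}$. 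Both arguments rest on the same inputs (the Laurent phenomenon for $\bff{x}$ and its one-step mutations, factoriality of Laurent polynomial rings, and Proposition~\ref{prop:unit}(1)), but the mechanisms differ: the paper's is a qualitative monomial-independence argument showing the twist must be trivial, while yours is a quantitative valuation bound showing the set of admissible twists is finite, of size controlled by $v_\pi(a)/v_\pi(x_i)$. Your version has the minor advantage of yielding explicit bounds; its only loose end is the degenerate case where $f_i$ is constant --- over $R=\ZZ$ a prime integer factor of $f_i$ still works as $\pi$ (it need not have positive degree), while over a field $f_i\in R^\times$ makes $x_i$ a unit, a pathology the paper's own proof likewise does not address.
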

	\begin{proof}
		To prove the statement it suffices to show that every non-zero element of $A$ has finitely many non-associated divisors \cite[Proposition 1.5.5]{GH06}. Suppose then for sake of contradiction that there is a non-zero element $a\in A$ with infinitely many non-associated divisors. Let $\bff{x}$ be a cluster and $P_\bff{x}$ the starfish product associated to it. Since every divisor of $a$ in $A$ is a divisor of $a$ in $L_{\bff{x}_i}$ for every $i\in [0,n]$, we have that, if $b$ divides $a$ in $A$, then $(b,\ldots, b)$ divides $(a,\ldots, a)$ in $P_\bff{x}$. The ring $P_\bff{x}$ is factorial, so in particular the element $(a,\ldots, a)$ has only finitely many non-associated divisors in $P_\bff{x}$, hence there must exist a sequence $(b_k)_{k\ge 1}$ consisting of non-associated divisors of $a$ in $A$ such that \[(b_k\ldots,b_k)P_\bff{x}=(b_l,\ldots,b_l)P_\bff{x}\quad \text{for every}\, k,l\ge 1.\] This implies that, for every $k,l\ge 1$, the elements $b_k$ and $b_l$ are associated in ${L_{\bff{x}_i}}$ for every $i\in [0,n]$, in particular, for every pair $(j,i)\in [1,n+m]\times [0,n]$ there exist $a_{j,i}\in \ZZ$ and $\epsilon_i\in R^\times$ such that \[\epsilon_ix_1^{a_{1,i}}\cdots {x_i'}^{a_{i,i}}\cdots x_{n+m}^{a_{{n+m},i}}b_k=b_l.\] Then for every $i\in [1,n]$ we have \[\epsilon_ix_1^{a_{1,i}}\cdots {x_i'}^{a_{i,i}}\cdots x_{n+m}^{a_{{n+m},i}}b_k=\epsilon_0x_1^{a_{1,0}}\cdots {x_i}^{a_{i,0}}\cdots x_{n+m}^{a_{{n+m},0}}b_k,\] whence $a_{j,i}=a_{j,0}$ and $a_{i,i}=a_{i,0}=0$, and $\epsilon_i=\epsilon_0$ for every $(j,i)\in[1,n+m]\times [1,n]$. This implies that $b_k=\epsilon_0x_{n+1}^{a_{n+1,0}}\cdots x_{n+m}^{a_{n+m,0}} b_l,$ and this is a contradiction because we assumed that $b_k$ and $b_l$ are not associated in $P_\bff{x}$.
	\end{proof}

	\section{Starfish Lemma}
	This section aims to prove the Starfish Lemma for generalized cluster algebras. This has been already done in \cite{GSV18} for a more general setting, but here we present a more straightforward proof that follows the original proof of the Starfish Lemma for classical cluster algebras, as in \cite{BFZ05}.
	
	\begin{proposition}\label{prop:star5}\cite[Lemma 3.9]{BCDX20}
		Let $(\bff{x},\rho,B)$ be a coprime generalized seed. Then \begin{equation}\label{eq:star5}
		S_\bff{x}=\bigcap_{i=2}^n R[x_{1},x_1',x_2^{\pm1},\cdots,x_{i-1}^{\pm1},x_i,x_i',x_{i+1}^{\pm1},\cdots,x_{n+m}^{\pm1}]
		\end{equation}
	\end{proposition}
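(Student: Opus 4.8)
The plan is to prove the two inclusions of \eqref{eq:star5} separately, following the structure of the classical Starfish Lemma proof in \cite{BFZ05} but keeping careful track of the strings $\rho$ and the divisibility data $\bff{d}$. Write $T_i = R[x_{1},x_1',x_2^{\pm1},\cdots,x_{i-1}^{\pm1},x_i,x_i',x_{i+1}^{\pm1},\cdots,x_{n+m}^{\pm1}]$ for $i\in[2,n]$, so that the right-hand side is $\bigcap_{i=2}^n T_i$. For the inclusion $S_\bff{x}\subseteq \bigcap_{i=2}^n T_i$, note first that by definition $S_\bff{x}=\bigcap_{j=0}^n L_{\bff{x}_j}$ with $L_{\bff{x}_0}=L_\bff{x}$. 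An element of $S_\bff{x}$ lies in $L_\bff{x}$ and in $L_{\bff{x}_i}$; the former gives a Laurent polynomial in $x_1,\ldots,x_{n+m}$, the latter one in $x_1,\ldots,x_i',\ldots,x_{n+m}$. I would argue that an element simultaneously Laurent in $x_i$ and in $x_i'$, with no negative powers of the other variables beyond those already allowed, must in fact be polynomial in both $x_i$ and $x_i'$ jointly once we localize away from the remaining variables — this is where coprimality is not even needed, only the fact that $x_i$ and $x_i'$ are coprime in the relevant localization (since $x_i x_i' = f_i$ up to a unit monomial, and $x_i\nmid f_i$). Intersecting over all $i\in[2,n]$ and throwing in $x_1,x_1'$ by the same reasoning in direction $1$ gives membership in every $T_i$. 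The slightly delicate point is that $T_i$ already inverts $x_1$, so one must check the $i=1$ localization statement is compatible; this follows because $x_1, x_1'$ appear symmetrically and a single application of the ``coprime $\Rightarrow$ joint polynomiality'' lemma handles it.

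For the reverse inclusion $\bigcap_{i=2}^n T_i\subseteq S_\bff{x}$, the essential content is to show that $\bigcap_{i=2}^n T_i\subseteq L_{\bff{x}_j}$ for each $j\in[0,n]$. For $j=0$ this is immediate since each $T_i\subseteq L_\bff{x}$. For $j\geq 1$, I would fix $j$ and observe that $T_j\subseteq R[x_1^{\pm1},\ldots,x_j,x_j',\ldots,x_{n+m}^{\pm1}]$; an element of this ring that also lies in $L_\bff{x}$ (as all of $\bigcap T_i$ does, via any single $T_i$) is a Laurent polynomial in the $\bff{x}$-variables that happens to be regular at $x_j'$, hence lies in $L_{\bff{x}_j}$ by substituting $x_j = f_j / x_j'$ — more precisely, $L_\bff{x}\cap R[x_1^{\pm1},\ldots,x_j,x_j',\ldots,x_{n+m}^{\pm1}] \subseteq L_{\bff{x}_j}$ because the only obstruction to being in $L_{\bff{x}_j}$ is a pole along $x_j'$, and there is none. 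When $j=1$ one uses that $x_1$ is inverted in every $T_i$, $i\geq 2$, so that $\bigcap_{i\ge 2}T_i$ already contains $x_1^{-1}$ and we can run the same substitution argument in direction $1$.

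The key structural input throughout is the following elementary fact, which I would isolate as the heart of the argument: if $A$ is a Laurent polynomial ring over $R$ in variables including $x_i$, and $x_i' := f_i/x_i$ where $f_i\in R[\bff{x}]$ is a polynomial with $x_i\nmid f_i$, then
\[
A[x_i^{\pm1}]\cap A[{x_i'}^{\pm1}] = A[x_i, x_i'],
\]
where $A[x_i,x_i']$ denotes the subring of the common fraction field generated by $A$, $x_i$, and $x_i'$. This is proved by a degree/valuation argument: an element with a pole of order $d$ in $x_i$, rewritten in terms of $x_i'$, acquires a factor of $f_i^{d}$ in a way incompatible with being regular in $x_i'$ unless $d=0$ on one side after clearing — concretely one compares the $x_i$-adic and $x_i'$-adic valuations and uses $x_i x_i' = f_i$. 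The main obstacle I anticipate is \emph{bookkeeping rather than conceptual}: the generalized exchange polynomial $f_i = \sum_{j=0}^{d_i}\rho_{i,j}\prod_k x_k^{j[\beta_{ki}]_+ + (d_i-j)[-\beta_{ki}]_+}$ is multinomial rather than binomial, so one must verify $x_i\nmid f_i$ (true: the $j=0$ term is $\prod_k x_k^{d_i[-\beta_{ki}]_+}$, which involves no $x_i$ since $\beta_{ii}=0$) and, more importantly, ensure that the valuation comparison does not secretly require coprimality of the $f_i$ among themselves — it does not, because \eqref{eq:star5} is a statement about the single upper bound $S_\bff{x}$, and coprimality enters only to identify $S_\bff{x}$ with the larger intersection in subsequent results (it is cited here, presumably from \cite{BCDX20}, because their formulation of $S_\bff{x}$ already presupposes coprimality for the equality $S_\bff{x}=\cal{U}$ to be relevant, but the ring-theoretic identity \eqref{eq:star5} itself goes through verbatim).
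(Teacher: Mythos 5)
The paper itself offers no proof of this proposition --- it is imported verbatim from \cite[Lemma 3.9]{BCDX20} --- so the only meaningful comparison is with the classical template in \cite{BFZ05} that the surrounding section follows. Your easy direction $\bigcap_{i=2}^n T_i\subseteq S_\bff{x}$ is fine (each generator of $T_i$ visibly lies in every $L_{\bff{x}_j}$), and the one-variable lemma you isolate, $A[x_i^{\pm1}]\cap A[{x_i'}^{\pm1}]=A[x_i,x_i']$ with $A$ the Laurent ring in the variables \emph{other} than $x_i$ (not ``including $x_i$'' as you wrote), is the correct analogue of \cite[Lemma 4.2]{BFZ05}. But that lemma only yields
\begin{equation*}
S_\bff{x}=\bigcap_{j=1}^n R[x_1^{\pm1},\dots,x_{j-1}^{\pm1},x_j,x_j',x_{j+1}^{\pm1},\dots,x_{n+m}^{\pm1}],
\end{equation*}
in which each term frees up only \emph{one} exchange pair. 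The stated identity \eqref{eq:star5} asks for membership in $T_i=R[x_1,x_1',x_2^{\pm1},\dots,x_i,x_i',\dots,x_{n+m}^{\pm1}]$, which is simultaneously polynomial in $x_1,x_1'$ \emph{and} in $x_i,x_i'$, and a priori strictly smaller than the intersection of the two one-pair rings.

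The missing step is therefore the two-variable gluing
\begin{equation*}
R[x_1,x_1'][x_2^{\pm1},\dots]\;\cap\;R[x_i,x_i'][x_1^{\pm1},\dots]\;=\;R[x_1,x_1',x_i,x_i'][\dots],
\end{equation*}
the analogue of \cite[Lemma 4.3]{BFZ05}, and this is precisely where the coprimality of the exchange polynomials $f_1$ and $f_i$ is used (one expands an element of the left-hand side in powers of $x_1'$ and of $x_i'$ and needs $f_1, f_i$ to have no common factor to control the cross terms). Your proposal is internally inconsistent on exactly this point: you first invoke an unstated ``coprime $\Rightarrow$ joint polynomiality'' lemma to ``handle'' it, and then in the closing paragraph assert that coprimality is not actually needed and that \eqref{eq:star5} ``goes through verbatim'' without it. The latter claim is wrong --- the coprimality hypothesis in the statement exists for this gluing step, and without it there is no reason for the inclusion $S_\bff{x}\subseteq T_i$ to hold. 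Since this gluing is also the only place where the generalized (multinomial) form of $f_i$ and the strings $\rho$ genuinely have to be tracked, the heart of the proof is absent rather than merely ``bookkeeping.''
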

	
	\begin{lemma}\label{lemma:4}
		Let $(\bff{x},\rho,B)$ be a generalized seed. Let $x_2'$ and $x_2''$ be the cluster variables exchanged with $x_{2}$ in the generalized seeds $\mu_2(\bff{x},\rho,B)$ and $\mu_2\mu_1(\bff{x},\rho,B)$, respectively, then \begin{equation}\label{eq:lemma4}
		R[x_1,x_1',x_2,x_2',x_3^{\pm1},\ldots,x_{n+m}^{\pm1}]=R[x_1,x_1',x_2,x_2'',x_3^{\pm1},\ldots,x_{n+m}^{\pm1}]
		\end{equation}
	\end{lemma}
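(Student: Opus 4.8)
The plan is to show that each side of \eqref{eq:lemma4} is generated over the common subring $R[x_1,x_1',x_3^{\pm1},\ldots,x_{n+m}^{\pm1}]$ by $x_2$ together with a single extra element ($x_2'$ on the left, $x_2''$ on the right), and that in fact $x_2'$ and $x_2''$ differ only by a factor that is a unit times a polynomial already available in the ring once we adjoin $x_2$, $x_1$, and $x_1'$. First I would write out the mutation relations explicitly. In $\mu_2(\bff{x},\rho,B)$ we have $x_2 x_2' = f_2$, where $f_2 = \sum_{j=0}^{d_2}\rho_{2,j}\prod_{k}x_k^{j[\beta_{k2}]_+ + (d_2-j)[-\beta_{k2}]_+}$ is the exchange polynomial at $2$; note $f_2$ is a polynomial in $x_1, x_3,\ldots,x_{n+m}$ only (it does not involve $x_2$). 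In $\mu_1(\bff{x},\rho,B)$ the variable $x_1$ is replaced by $x_1'$ while $x_2$ is unchanged, the exchange matrix changes by the mutation rule in direction $1$, and the string $\rho_2$ is \emph{unchanged} (only $\rho_1$ flips). Hence in $\mu_2\mu_1(\bff{x},\rho,B)$ we get $x_2 x_2'' = \widetilde f_2$, where $\widetilde f_2$ is the exchange polynomial at $2$ computed from the once-mutated data: it has the same shape as $f_2$ but with $\beta_{12}$ replaced by $\beta_{12}' = (b_{12} + [b_{21}]_+ b_{21} + \cdots)/d_2$ according to the direction-$1$ mutation of $b_{12}$, and with $x_1$ replaced by $x_1'$.

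Next I would use the direction-$1$ exchange relation $x_1 x_1' = f_1$, where $f_1$ is a polynomial in $x_2,\ldots,x_{n+m}$ (not in $x_1$), to rewrite $\widetilde f_2$ inside the ring $R[x_1,x_1',x_2,x_3^{\pm1},\ldots,x_{n+m}^{\pm1}]$. The key computation is that the monomial $\prod_k x_k^{j[\beta_{k2}]_+ + (d_2-j)[-\beta_{k2}]_+}$ appearing in $f_2$ and the corresponding monomial appearing in $\widetilde f_2$ differ precisely by replacing the $x_1$-power coming from $\beta_{12}$ with the $x_1'$-power coming from $\beta_{12}'$, and — because the direction-$1$ mutation of $b_{12}$ changes it by an amount governed by the sign of $b_{21}$, exactly compensated by the frozen-variable contributions from the $j$-dependent exponents — the two monomials are related by multiplication by a Laurent monomial in $x_1, x_1', x_3^{\pm1},\ldots,x_{n+m}^{\pm1}$ times (a power of) $f_1 = x_1 x_1'$. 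Concretely, I expect to verify that $\widetilde f_2 = (\text{Laurent monomial in the units and in }x_1,x_1')\cdot f_2$ \emph{after} substituting $x_1 x_1' = f_1$; since $f_1$ is a polynomial in $x_2, x_3,\ldots,x_{n+m}$, the substitution stays inside both rings in \eqref{eq:lemma4}. This identifies the $R[x_1,x_1',x_3^{\pm1},\ldots]$-submodule generated by $x_2'$ (equivalently $x_2^{-1} f_2$, but we only invert $x_2$ implicitly through the relation) with the one generated by $x_2''$, giving both inclusions and hence the equality.

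The cleanest way to organize this is probably: (i) fix the common subring $C = R[x_1,x_1',x_3^{\pm1},\ldots,x_{n+m}^{\pm1}]$ and observe both sides of \eqref{eq:lemma4} are $C[x_2, \cdot]$; (ii) show $x_2'$ lies in $C[x_2,x_2'']$ by substituting the relations $x_2 x_2'' = \widetilde f_2$ and $x_1 x_1' = f_1$ into the expression $x_2 x_2' = f_2$ and checking the resulting identity $x_2 x_2' = u\cdot x_2 x_2''$ (for a unit $u\in C^\times$, so actually $x_2' = u\, x_2''$, or more generally $x_2'$ is a $C$-linear combination of $1, x_2'', x_2 x_2'', \ldots$); (iii) argue symmetrically for the reverse inclusion using $(\mu_2\mu_1)^{-1}$ or just the symmetry of the computation. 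The main obstacle will be step (ii): carefully tracking the exponents $j[\beta_{k2}]_+ + (d_2-j)[-\beta_{k2}]_+$ under the direction-$1$ mutation $b_{12}\mapsto b_{12}' = b_{12} + [b_{12}]_+[b_{21}]_+ - [-b_{12}]_+[-b_{21}]_+$ (the sign-dependent correction term), making sure the frozen-variable string coefficients $\rho_{2,j}$ — which are unchanged by $\mu_1$ — still match up termwise, and confirming that the discrepancy is exactly a power of $f_1 = x_1 x_1'$ so that the substitution is legitimate and produces a genuine unit of $C$ rather than a non-invertible factor. One has to do this separately according to the signs of $b_{12}$ and $b_{21}$ (a short case analysis), but in each case the bookkeeping is the same calculation that underlies the classical Lemma 4.11 of \cite{BFZ05}, now carried out with $d_2$-fold multinomial exchange polynomials in place of binomials.
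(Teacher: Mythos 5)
Your overall strategy --- writing out the three exchange relations, using $x_1x_1'=f_1$ to convert powers of $x_1x_1'$ into polynomials in $x_2$ and the frozen variables, and matching the terms of the two exchange polynomials for $x_2$ via the mutation rule for $B$ --- is exactly the strategy of the paper's proof (which in turn follows the classical Starfish Lemma computation of Berenstein--Fomin--Zelevinsky). However, the key identity you propose to verify in step (ii) is false, and this is a genuine gap. It is not true that $x_2x_2'=u\cdot x_2x_2''$ for a unit $u\in C^\times$, nor that the two exchange polynomials differ by a Laurent monomial factor after substituting $x_1x_1'=f_1$. Already in the simplest classical rank-two case $B=\begin{pmatrix}0&1\\-1&0\end{pmatrix}$ one has $x_2'=(x_1+1)/x_2$, $x_1'=(x_2+1)/x_1$, $x_2''=(x_1'+1)/x_2=(x_1+x_2+1)/(x_1x_2)$, and the correct relation is $x_2''=x_1'x_2'-1$: the multiplicative factor $x_1'$ is \emph{not} a unit of $C$, and there is an essential additive correction term. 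In general the computation yields
\[
x_2x_2''=\frac{q_3(x_1')^{b}}{r_2}\,x_2x_2'+x_2\sum_{k}P_k,
\]
where $q_3/r_2$ is a Laurent monomial in the frozen variables, $b=b_{12}>0$, and the $P_k$ are polynomials in $x_2$; the additive term arises because $(f(x_2)+r_1)^{k\beta_{12}}$ agrees with $r_1^{k\beta_{12}}$ only modulo multiples of $x_2$, and it is precisely this term that shows $x_2''=\frac{q_3(x_1')^{b}}{r_2}x_2'+\sum_kP_k$ lies in $R[x_1,x_1',x_2,x_2',x_3^{\pm1},\ldots,x_{n+m}^{\pm1}]$. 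If the relation really were $x_2'=u\,x_2''$ with $u$ a unit, the lemma would be trivial; it is not of that form, and an argument built on checking that identity would fail at the first example.

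Two further points. First, under $\mu_1$ the entry $b_{12}$ simply changes sign ($b'_{12}=-b_{12}$, since its first index is the mutation direction), so your displayed mutation formula for $b_{12}$ is not the relevant one; the sign-dependent corrections affect the entries $b_{j2}$ with $j\neq 1$, and it is the resulting identities $r_2r_3=q_2q_3r_1^{b}$ and $h_k=g_kq_3r_1^{k\beta_{12}}/r_2$ that make the term-by-term bookkeeping close up. Second, because the multiplicative factor $q_3(x_1')^{b}/r_2$ is not invertible in either ring, the reverse inclusion cannot be obtained by dividing; it genuinely requires the symmetric computation you gesture at in step (iii), which works because $\mu_1$ and $\mu_2$ are involutions. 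With the claimed identity replaced by the correct ``monomial multiple plus an $x_2$-divisible remainder'' statement, your outline becomes the paper's proof.
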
Before proceeding with the proof, we present an example to help the reader navigate the technical details of the lemma.
    \begin{example}
        Let \[B=\begin{pmatrix}
            0 & 2 \\ -3 & 0 \\ 0 & -4
        \end{pmatrix}, \quad \bff{x}=\{x_1,x_2,x_3\},\quad \rho_1=\{1,3,3,1\},\quad \rho_2=\{1,2,1\},\] as in Example \ref{ex:genmut}. Let $x_2'$ and $x_2''$ be the cluster variables exchanged with $x_{2}$ in the generalized seeds $\mu_2(\bff{x},\rho,B)$ and $\mu_2\mu_1(\bff{x},\rho,B)$, respectively. Observe that \[\mu_1(B)=\begin{pmatrix}
            0 & -2 \\ 3 & 0 \\ 0 & -4
        \end{pmatrix}.\] Therefore we have  \begin{gather*}
		x_1x_1'=x_2^3+3x_2^2+3x_2+1\\
		x_2x_2'=x_1^2+2x_1x_3^2+{x_3^4},\\
		x_2x_2''= {x_1'}^2x_3^4+2{x_1'}x_3^2+1.
		\end{gather*} Then if we replace $x_3^4$ by $x_2x_2'-x_1^2+2x_1x_3^2$ we get \begin{equation*}
		    \begin{split}
		        x_2x_2''&={x_1'}^2x_3^4+2{x_1'}x_3^2+1\\& =(x_2x_2'-x_1^2-2x_1x_3^2){x_1'}^2+2{x_1'}x_3^2+1\\&=x_2x_2'{x_1'}^2-(x_2^3+3x_2^2+3x_2+1)^2-2x_3^2x_1'(x_2^3+3x_2^2+3x_2+1)+2{x_1'}x_3^2+1\\&=x_2x_2'{x_1'}^2-(x_2^3+3x_2^2+3x_2)^2-2x_2^3-6x_2^2-6x_2+2x_3^2x_1'(x_2^3+3x_2^2+3x_2)
		    \end{split}
		\end{equation*} and in particular \[x_2''=x_2'{x_1'}^2-(x_2^2+3x_2+3)^2-2x_2^2-6x_2-6+2x_3^2x_1'(x_2^2+3x_2+3)\in R[x_1,x_1',x_2,x_2'].\]
        Similarly, \begin{equation*}
		    \begin{split}
		        x_2x_2'&=x_1^2+2x_1x_3^2+{x_3^4}\\& =(x_2x_2''-{x_1'}^2x_3^4-2{x_1'}x_3^2)x_1^2+2x_1x_3^2+{x_3^4}\\&=x_2x_2''x_1^2-x_3^4(x_2^3+3x_2^2+3x_2+1)^2-2x_3^2{x_1}(x_2^3+3x_2^2+3x_2+1)+2x_1x_3^2+{x_3^4}\\&=x_2x_2''x_1^2-x_3^4(x_2^3+3x_2^2+3x_2)^2-2x_3^4(x_2^3+3x_2^2+3x_2)+x_3^2x_1(x_2^3+3x_2^2+3x_2)
		    \end{split}
		\end{equation*} and therefore \[x_2'=x_2''x_1^2-x_3^4(x_2^2+3x_2+3)^2-2x_3^4(x_2^2+3x_2+3)+x_3^2x_1(x_2^2+3x_2+3)\in R[x_1,x_1',x_2,x_2''].\]
    \end{example}
	\begin{proof}[Proof of Lemma \ref{lemma:4}]
		We can freeze the variables $x_3,\ldots,x_{n+m}$ and view $R[x_3^{\pm1},\ldots,x_{n+m}^{\pm1}]$ as the new ground ring $R$. Thus we reduce the proof of \eqref{eq:lemma4} to the case $n=2$, that is, we will prove that \begin{equation}\label{eq:lemma4.1}
		R[x_1,x_1',x_2,x_2']=R[x_1,x_1',x_2,x_2''].
		\end{equation}
		We first show that $x_2''\in R[x_1,x_1',x_2,x_2'].$
		
		If $b_{12}=b_{21}=0,$ then $x_2''=x_2'$, which implies $x_2''\in R[x_1,x_1',x_2,x_2']$.
		
		Without loss of generality, assume that  $b_{12}> 0$. Write $b=b_{12}$ and $c=-b_{21}$. Denote by $B'=(b_{ij}')$ the matrix obtained by mutating $B$ in direction 1. Observe that $b_{21}'=c$ and $\mu_2(b'_{12})=b$. As always we denote by $\beta_{ji}$ the integer $b_{ji}/d_i$, with $d_1,\ldots,d_n\in \NN$ the given divisors of the columns of $B$. The exchange relations take then the form: 
		\begin{gather*}
		x_1x_1'=x_{2}^{c}\underbrace{\prod_{b_{j1}<0}x_{j}^{-b_{j1}}}_{q_1}+\underbrace{\prod_{b_{j1>0}}x_{j}^{b_{j1}}}_{r_1}+\sum_{k=1}^{d_{1}-1}\rho_{1,k}x_{2}^{(k-d_{1})\beta_{21}}\prod_{j=3}^{2+m}x_{j}^{k[\beta_{j1}]_{+}+(d_{1}-k)[-\beta_{j1}]_{+}},\\
		x_2x_2'=x_{1}^{b}\underbrace{\prod_{b_{j2}>0}x_{j}^{b_{j2}}}_{q_2}+\underbrace{\prod_{b_{j2<0}}x_{j}^{-b_{j2}}}_{r_2}+\sum_{k=1}^{d_{2}-1}\rho_{2,k}x_{1}^{k\beta_{12}}\prod_{j=3}^{2+m}x_{j}^{k[\beta_{j2}]_{+}+(d_{2}-k)[-\beta_{j2}]_{+}},\\
		x_2x_2''={x'_{1}}^{b}\underbrace{\prod_{b'_{j2}<0}x_{j}^{-b'_{j2}}}_{q_3}+\underbrace{\prod_{b'_{j2>0}}x_{j}^{b'_{j2}}}_{r_3}+\sum_{k=1}^{d_{2}-1}\rho_{2,k}{x'_{1}}^{(k-d_{2})\beta_{12}}\prod_{j=3}^{2+m}x_{j}^{k[\beta'_{j2}]_{+}+(d_{2}-k)[-\beta'_{j2}]_{+}},
		\end{gather*}
		We can rewrite the exchange relations as 		
		\begin{gather*}
		x_1x_1'=f(x_2)+r_1,\\
		x_2x_2'=g(x_1)+r_2,\\
		x_2x_2''=h(x_1')+r_3,
		\end{gather*}
		where 
		\begin{gather*}
		f(x_2)=\sum_{k=0}^{d_1-1}f_kx_{2}^{(k-d_{1})\beta_{21}},\qquad f_k=\rho_{1,k}\prod_{j=3}^{2+m}x_{j}^{k[\beta_{j1}]_{+}+(d_{1}-k)[-\beta_{j1}]_{+}}, \\
		g(x_1)=\sum_{k=1}^{d_{2}}g_kx_{1}^{k\beta_{12}},\qquad g_k=\rho_{2,k}\prod_{j=3}^{2+m}x_{j}^{k[\beta_{j2}]_{+}+(d_{2}-k)[-\beta_{j2}]_{+}}, \\
		h(x_1')=\sum_{k=0}^{d_{2}-1}h_k{x'_{1}}^{(d_{2}-k)\beta_{12}}, \qquad h_k=\rho_{2,k}\prod_{j=3}^{2+m}x_{j}^{k[\beta'_{j2}]_{+}+(d_{2}-k)[-\beta'_{j2}]_{+}},
		\end{gather*} 
		are polynomials without constant terms.
		
		Remember that, by definition of mutation, we have \[b'_{j2}=b_{j2}+([b_{12}]_+ b_{j1} + b_{12}[-b_{j1}]_+)=b_{j2}+b_{12}b_{j1} + b_{12}[-b_{j1}]_+.\] It is easy to check that \begin{equation*}\label{eq:r3}
		r_{2}r_{3}=q_2q_3r_{1}^{b}.
		\end{equation*} Moreover, every $h_{k}$ with $k\in [1,d_{2}-1]$ satisfies \begin{equation*}
		h_{k}=\frac{g_{k}q_3r_{1}^{r\beta_{12}}}{r_{2}}.
		\end{equation*} 
		Therefore we have the following equalities:
		\begin{equation}\label{eq:13}
		\begin{split}
		x_2x_2''&=q_3{x'_{1}}^{b}+\sum_{k=1}^{d_{2}-1}h_k{x'_{1}}^{(d_{2}-k)\beta_{12}}+r_3\\&=\frac{q_3}{r_2}\left(x_2x_2'-g\left(x_1\right)\right){x'_{1}}^{b}+\sum_{k=1}^{d_{2}-1}h_k{{x'_{1}}}^{(d_{2}-k)\beta_{12}}+r_3\\&=x_2\left(\frac{q_3x_2'{x'_1}^b}{r_2}\right)-\left(\frac{q_3g\left(x_1\right){x'_1}^b}{r_2}-\left(\sum_{k=1}^{d_2-1}h_k{x'_1}^{(d_2-k)\beta_{12}}\right)-r_3\right).
		\end{split}
		\end{equation}
		Observe that \begin{equation}\label{eq:10}
		\begin{split}
		\frac{q_3g\left(x_1\right){x'_1}^b}{r_2}&=\left(\sum_{k=1}^{d_2}\frac{g_kq_3}{r_2}x_1^{k\beta_{12}}\right){x'_1}^b=\sum_{k=1}^{d_2}\frac{g_kq_3}{r_2}(x_1x_1')^{k\beta_{12}}{x'_1}^{(d_2-k)\beta_{12}}\\&=\sum_{k=1}^{d_2}\frac{g_kq_3}{r_2}(f(x_2)+r_1)^{k\beta_{12}}{x'_1}^{(d_2-k)\beta_{12}}.
		\end{split}
		\end{equation} 
		The element $\frac{q_2q_3}{r_2}(f(x_2)+r_1)^{b}$ can we rewritten as
		\begin{equation}\label{eq:11}
		x_2P_{d_2}+\frac{q_2q_3r_1^b}{r_2}=x_2P_{d_2}+r_3,
		\end{equation}
		where $P_{d_2}$ is a polynomial in $x_2$. In addition, for every $i\in[1,d_2-1]$ we have 
		\begin{equation}\label{eq:12}
		\begin{split}
		\frac{g_{k}q_3}{r_2}(f(x_2)+r_1)^{k\beta_{12}}{x'_1}^{(d_2-k)\beta_{12}}=x_2P_{k}+\frac{g_{k}q_3r_1^{k\beta_{12}}}{r_2}{x'_1}^{(d_2-k)\beta_{12}}=x_2P_{k}+h_k{x'_1}^{(d_2-k)\beta_{12}},
		\end{split}
		\end{equation}
		where $P_r$ is a polynomial in $x_2$. Combining \eqref{eq:10} with \eqref{eq:11} and \eqref{eq:12}, we get
		\begin{equation*}
		\begin{split}
		\frac{q_3g\left(x_1\right){x'_1}^b}{r_2}&=\sum_{k=1}^{d_2}\frac{g_kq_3}{r_2}(f(x_2)+r_1)^{k\beta_{12}}{x'_1}^{(d_2-k)\beta_{12}}\\&=x_2P_{d_2}+r_3+\sum_{k=1}^{d_2-1}(x_2P_{k}+h_k{x'_1}^{(d_2-k)\beta_{12}}).
		\end{split}
		\end{equation*}
		Thus, \eqref{eq:13} can be rewritten as 
		\begin{equation*}
		x_2x_2''=x_2\left(\frac{q_3x_2'{x'_1}^b}{r_2}\right)+x_2\sum_{k=1}^{d_2}P_k. 
		\end{equation*}
		Hence $x_2''\in R[x_1,x_1',x_2,x_2']$. 
		
		Similarly, one can prove that $x_2'\in R[x_1,x_1',x_2,x_2'']$. 			
	\end{proof}
	
	\begin{theorem}\label{thm:star7}
		Let $(\bff{x},\rho, B)$ be a generalized seed, and $(\bff{x}',\rho', B')$ be the seed obtained mutating $(\bff{x},\rho, B)$ in direction $k\in [1,n]$. If the two seeds are both coprime, then $S_\bff{x}=S_{\bff{x}'}$. 
	\end{theorem}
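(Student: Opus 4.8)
The plan is to reduce to the case $k=1$ by relabeling the mutable directions $1,\dots,n$ — a symmetry that renames variables in $\mathcal{F}$ and hence carries $S_\bff{x}$ to the upper bound of the relabeled seed, while preserving coprimality — and then to compare $S_\bff{x}$ and $S_{\bff{x}'}$ term by term using Proposition \ref{prop:star5} and Lemma \ref{lemma:4}. The starting observation is that, since $\mu_1\circ\mu_1=\mathrm{id}$, mutating $\bff{x}'=\bff{x}_1$ in direction $1$ returns $(\bff{x},\rho,B)$. Consequently $\bff{x}$ and $\bff{x}'$ share the variables $x_2,\dots,x_{n+m}$; the index-$1$ variable of $\bff{x}'$ is $x_1'$; and the variable exchanged with $x_1'$ when $\bff{x}'$ is mutated in direction $1$ is precisely $x_1$. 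The case $n=1$ is immediate: then $S_\bff{x}=L_\bff{x}\cap L_{\bff{x}_1}=R[x_1,x_1',x_2^{\pm1},\dots,x_{n+m}^{\pm1}]$, which is symmetric in $x_1\leftrightarrow x_1'$ and hence equals $S_{\bff{x}'}$. So assume $n\ge 2$.

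Since both seeds are coprime, Proposition \ref{prop:star5} applies to each. Writing $x_i'$ for the variable exchanged with $x_i$ in $\mu_i(\bff{x})$ and $x_i''$ for the variable exchanged with $x_i$ in $\mu_i\mu_1(\bff{x})$, it gives
\[
S_\bff{x}=\bigcap_{i=2}^{n} R[x_1,x_1',x_2^{\pm1},\dots,x_{i-1}^{\pm1},x_i,x_i',x_{i+1}^{\pm1},\dots,x_{n+m}^{\pm1}],
\]
and, using the remarks above to identify the data of $\bff{x}'$ (its index-$1$ variable $x_1'$ mutates to $x_1$, its index-$i$ variable $x_i$ mutates to $x_i''$ in $\mu_i(\bff{x}')=\mu_i\mu_1(\bff{x})$),
\[
S_{\bff{x}'}=\bigcap_{i=2}^{n} R[x_1',x_1,x_2^{\pm1},\dots,x_{i-1}^{\pm1},x_i,x_i'',x_{i+1}^{\pm1},\dots,x_{n+m}^{\pm1}].
\]
Thus both intersections run over the same index set, and for each $i$ the two rings differ only in that one contains $x_i'$ and the other contains $x_i''$.

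It therefore suffices to show, for each fixed $i\in[2,n]$, that
\[
R[x_1,x_1',x_2^{\pm1},\dots,x_i,x_i',\dots,x_{n+m}^{\pm1}]=R[x_1,x_1',x_2^{\pm1},\dots,x_i,x_i'',\dots,x_{n+m}^{\pm1}].
\]
Relabeling so that $i$ takes the role of $2$ (which moves all the remaining Laurent variables to positions $3,\dots,n+m$), this is exactly Lemma \ref{lemma:4}, whose hypotheses require nothing beyond being a generalized seed; one may also, if convenient, absorb the Laurent ring in $x_j$ for $j\in[2,n+m]\setminus\{i\}$ into the ground ring, as in the proof of that lemma. Hence $S_\bff{x}=S_{\bff{x}'}$. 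The step requiring the most care is the bookkeeping of the second paragraph: verifying that the privileged index in Proposition \ref{prop:star5} may be taken to be the mutation direction, that mutating $\bff{x}'$ back in that direction recovers $\bff{x}$ so that the first two generators of every term in the two intersections literally agree, and that the sole remaining discrepancy at each $i\ge 2$ is precisely the one resolved by Lemma \ref{lemma:4}.
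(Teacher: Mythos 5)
Your proof is correct and is essentially the paper's own argument: the paper simply states that the theorem is a direct consequence of Proposition \ref{prop:star5} and Lemma \ref{lemma:4}, and your write-up supplies exactly the intended bookkeeping (applying Proposition \ref{prop:star5} to both coprime seeds, identifying the mutated data of $\bff{x}'$ via $\mu_1\circ\mu_1=\mathrm{id}$, and matching the intersections term by term with Lemma \ref{lemma:4}). The extra care you take with the relabeling to $k=1$ and the $n=1$ case is sound and fills in details the paper leaves implicit.
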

	\begin{proof}
		The theorem is a direct consequence of Proposition \ref{prop:star5} and Lemma \ref{lemma:4}.
	\end{proof}
	\begin{corollary}
		Assume that all the generalized seeds mutation-equivalent to a generalized seed $(\bff{x},\rho, B)$ are coprime. Then the upper bound $S_\bff{x}$ is independent of the choice of generalized seeds mutation-equivalent to $(\bff{x},\rho, B)$, and so is equal to the generalized upper cluster algebra $\cal{U}(\bff{x},\rho, B).$
	\end{corollary}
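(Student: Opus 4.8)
The plan is to deduce the corollary directly from Theorem \ref{thm:star7} together with the definitions of the upper bound and the generalized upper cluster algebra. First I would observe that any two generalized seeds $(\bff{y},\nu,C)$ and $(\bff{z},\xi,E)$ mutation-equivalent to $(\bff{x},\rho,B)$ are connected by a finite sequence of single mutations $(\bff{y},\nu,C) = (\bff{w}_0,\ldots) \to (\bff{w}_1,\ldots) \to \cdots \to (\bff{w}_\ell,\ldots) = (\bff{z},\xi,E)$, and by hypothesis every seed in this chain is coprime. Applying Theorem \ref{thm:star7} to each consecutive pair gives $S_{\bff{w}_0} = S_{\bff{w}_1} = \cdots = S_{\bff{w}_\ell}$, hence $S_{\bff{y}} = S_{\bff{z}}$. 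This proves that $S_{\bff{x}}$ does not depend on the chosen representative of the mutation-equivalence class; call this common domain $S$.

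Next I would show $S = \cal{U}(\bff{x},\rho,B)$ by a double inclusion. For "$\supseteq$", recall that by definition $S_{\bff{y}} = \bigcap_{i=0}^{n} L_{\bff{y}_i}$ is an intersection of Laurent polynomial rings each of which contains $\cal{U}(\bff{x},\rho,B) = \bigcap_{\bff{x}\sim\bff{z}} L_{\bff{z}}$, since each $\bff{y}_i$ is itself a cluster mutation-equivalent to $\bff{x}$; hence $\cal{U} \subseteq S_{\bff{y}} = S$. For "$\subseteq$", take $u \in S$. For an arbitrary cluster $\bff{z}$ with $\bff{x}\sim\bff{z}$, we have $u \in S = S_{\bff{z}}$ (using the independence just proved), and $S_{\bff{z}} = \bigcap_{i=0}^n L_{\bff{z}_i} \subseteq L_{\bff{z}_0} = L_{\bff{z}}$; thus $u \in L_{\bff{z}}$ for every such $\bff{z}$, so $u \in \bigcap_{\bff{x}\sim\bff{z}} L_{\bff{z}} = \cal{U}(\bff{x},\rho,B)$. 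Combining the two inclusions yields $S = \cal{U}(\bff{x},\rho,B)$.

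There is essentially no serious obstacle here: the entire content is packaged in Theorem \ref{thm:star7}, and the corollary is a bookkeeping argument over a finite mutation path plus the trivial observation that each $S_{\bff{y}}$ sits between $\cal{U}$ and a single $L_{\bff{y}}$. The one point deserving a word of care is making sure the coprimality hypothesis is genuinely used at \emph{every} node of the connecting path rather than only at its endpoints — this is exactly why the statement assumes that \emph{all} seeds mutation-equivalent to $(\bff{x},\rho,B)$ are coprime, not merely finitely many chosen ones. I would state this explicitly so the reader sees that Theorem \ref{thm:star7} applies at each step of the telescoping chain of equalities.
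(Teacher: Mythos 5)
Your argument is correct and is exactly the intended one: the paper states this corollary without proof as an immediate consequence of Theorem \ref{thm:star7}, and your telescoping chain of equalities $S_{\bff{w}_0}=\cdots=S_{\bff{w}_\ell}$ plus the double inclusion $\cal{U}\subseteq S_{\bff{y}}\subseteq L_{\bff{y}}$ is precisely the standard bookkeeping that justifies it. Your remark that coprimality is needed at every intermediate seed of the connecting path (so that Theorem \ref{thm:star7} applies to each consecutive pair) correctly identifies why the hypothesis is phrased for the whole mutation class.
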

	
	A classical seed $(\bff{x},B)$ is coprime if and only if no two
	columns of $B$ are proportional to each other with the proportionality coefficient being
	a ratio of two odd integers \cite[Lemma 3.1]{BFZ05}.
	For generalized seeds, this is not true anymore as the following example shows.
	
	\begin{example}
		Let $(\bff{x},\rho,B)$ be the generalized seed given by
		\begin{itemize}
			\item $\bff{x}=\{x_1,x_2,x_3\}$,
			\item $B=\begin{pmatrix}
			0 & -1 & 0 \\ 1 & 0 & -3 \\ 0 & 3 & 0
			\end{pmatrix},$
			\item $\rho_1=\rho_2=\{1,1\}$ and $\rho_3=\{1,1,4,1\}.$
		\end{itemize}
		Observe that the first and the last column of $B$ are proportional to each other with $3$ as a proportionality coefficient. On the other hand, the exchange polynomials are given by 
		\[f_1=1+x_2, \quad f_2=x_1+x_3^3,\quad f_3=x_2^3+x_2^2+4x_2+1,\] hence they are coprime.
	\end{example}
	
	\begin{lemma}\label{lemma:polytope}
		Let $(\bff{x},\rho,B)$ be a generalized seed. If no two columns of $B$  are proportional to each other, then the seed is coprime.
	\end{lemma}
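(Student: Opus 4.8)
The plan is to establish the contrapositive: if two exchange polynomials $f_i,f_j$ of $(\bff{x},\rho,B)$ with $i\ne j$ have a common non-unit factor in $R[\bff{x}]$, then the $i$-th and $j$-th columns of $B$ are proportional; coprimality of the seed follows at once. So fix $i\ne j$ and an irreducible $g\in R[\bff{x}]$ dividing both $f_i$ and $f_j$. Since $f_k=\prod_l x_l^{[b_{lk}]_+}+\cdots+\prod_l x_l^{[-b_{lk}]_+}$ has two extreme monomials of disjoint support, $f_k$ is divisible by no variable, hence $g$ is not a monomial and a common factor persists in the Laurent ring $L_\bff{x}$ (and conversely clears back to $R[\bff{x}]$). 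Dividing $f_k$ by its trailing monomial inside $L_\bff{x}$ yields
\[
\tilde f_k=\sum_{l=0}^{d_k}\rho_{k,l}\,y_k^{\,l},\qquad y_k:=\prod_{l=1}^{n+m}x_l^{\beta_{lk}},\quad \beta_k:=\big(\beta_{lk}\big)_l=\tfrac1{d_k}(\text{$k$-th column of }B),
\]
with $g\mid\tilde f_i$, $g\mid\tilde f_j$, and $\beta_i\parallel\beta_j$ iff the two columns are proportional. I record two structural facts: every monomial of $\tilde f_k$ has exponent $e_{k,l}+l\beta_k$, where $e_{k,l}=\exp(\rho_{k,l})$ lies in the non-negative orthant of the frozen coordinates and $e_{k,0}=e_{k,d_k}=0$; hence (i) the projection to the mutable coordinates $[1,n]$ of every exponent of $\tilde f_k$ equals $l\,\beta_k^{\mathrm{mut}}$, and (ii) one of the extreme monomials $1,\,y_k^{d_k}$ sits at a vertex of the Newton polytope $\mathrm{New}(\tilde f_k)$ (the $k$-th column being nonzero).

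\emph{Case 1: some linear functional $\phi$ on the mutable coordinates satisfies $\phi(\beta_i^{\mathrm{mut}})=0\ne\phi(\beta_j^{\mathrm{mut}})$, or the same with $i,j$ swapped.} Extend $\phi$ by zero on the frozen coordinates and scale so $\phi(\beta_j^{\mathrm{mut}})>0$; let $\deg_\phi$ be the induced $\ZZ$-grading of $L_\bff{x}$. By (i), $\tilde f_i$ is $\phi$-homogeneous of degree $0$, whereas the constant monomial $1=\rho_{j,0}y_j^0$ is the unique term of $\tilde f_j$ of smallest $\deg_\phi$, since the $l$-th term has degree $l\,\phi(\beta_j^{\mathrm{mut}})>0$ for $l\ge 1$ (the frozen monomial $\rho_{j,l}$ contributes $0$). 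A divisor of a $\phi$-homogeneous element of a graded domain is $\phi$-homogeneous, so $g$ is $\phi$-homogeneous; dividing $\tilde f_j$, whose $\deg_\phi$-minimal part is the scalar $1$, it must then be a unit — a contradiction. So no such $\phi$ exists, and because the top $n\times n$ block of $B$ is skew-symmetrizable (hence has vanishing diagonal) this can only happen when $\beta_i^{\mathrm{mut}}$ and $\beta_j^{\mathrm{mut}}$ are proportional: both zero, or both nonzero scalar multiples of one another.

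\emph{Case 2: $\beta_i^{\mathrm{mut}}$ and $\beta_j^{\mathrm{mut}}$ are proportional.} Since $\beta_i\not\parallel\beta_j$, the columns now differ only in their frozen entries. Applying an element of $GL_n(\ZZ)$ to the mutable variables (which changes nothing in the statement) we may assume $\beta_i^{\mathrm{mut}},\beta_j^{\mathrm{mut}}$ are integer multiples of the coordinate vector of $x_1$; then $x_2,\dots,x_n$ occur in neither $\tilde f_i$ nor $\tilde f_j$, and it suffices to prove coprimality inside $\Lambda:=R[x_1^{\pm1},x_{n+1}^{\pm1},\dots,x_{n+m}^{\pm1}]$. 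There $\tilde f_i,\tilde f_j$ have the special shape $\sum_l\rho_l\,y^l$ with $\rho_l$ monomials, the first and last equal to $1$, and with linearly independent ``$y$-exponent vectors'' (the restrictions of $\beta_i,\beta_j$ to the variables of $\Lambda$). Here I would exploit the monomiality of the $\rho_l$ and $\rho_0=1$ decisively: after extending scalars to an algebraically closed field one may reduce (via the finite cyclic symmetry $x_1\mapsto\zeta x_1$, $\zeta^{\beta_{1i}}=1$, which fixes $\tilde f_i$) to understanding $\tilde f_i$ as a one-variable polynomial in the monomial $y_i$ with monomial, primitive coefficients, and then run a Newton-polytope argument in $\Lambda$: the constant-term vertex of $\mathrm{New}(\tilde f_i)$ is incident only to edges whose direction is ``$\beta_i$ plus a non-negative frozen vector'' (by (i) and $e_{i,l}\ge 0$), and symmetrically for $\tilde f_j$ with $\beta_j$; combined with the linear independence of $\beta_i,\beta_j$ this forces a common irreducible factor to have a single-point Newton polytope, i.e. to be a monomial — impossible. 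The fully degenerate subcase, where both columns lie in the frozen coordinate subspace, is the instance ``$x_1$ absent'' of this. Combining Cases 1 and 2, $f_i$ and $f_j$ share no non-unit factor, and as $i\ne j$ were arbitrary the seed is coprime.

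The main obstacle is Case 2. In the classical (binomial) setting $\mathrm{New}(\tilde f_k)$ is a segment in the direction of the $k$-th column and one finishes in a line: a common Minkowski summand of two non-parallel segments is a point. Here the slacks $\rho_{k,l}$ can make $\mathrm{New}(\tilde f_k)$ genuinely higher-dimensional — for instance $1+x_{n+1}x_1+x_1^2$ is an irreducible exchange polynomial with a triangular Newton polytope — so the easy argument is unavailable, and the real content is that these extra frozen degrees of freedom can never conspire to give two exchange polynomials with non-proportional columns a shared factor. One clean way to organise the proof is a Farkas dichotomy: either there is a $\ZZ$-grading making $\tilde f_i$ homogeneous and $1$ the unique $\deg$-minimal term of $\tilde f_j$ (finishing exactly as in Case 1), or the symmetric grading does the job with $i$ and $j$ interchanged; what must then be checked is that the constraints on an exchange matrix together with non-proportionality of the columns exclude the configurations in which both gradings fail — and this verification is, I believe, the crux.
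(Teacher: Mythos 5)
Your proposal is not a complete proof. Case 1 is correct and carefully argued: if some integral linear functional $\phi$ on the mutable coordinates kills $\beta_i^{\mathrm{mut}}$ but not $\beta_j^{\mathrm{mut}}$, then a common factor would be $\phi$-homogeneous (as a divisor of the homogeneous element $\tilde f_i$ in the $\ZZ$-graded domain $L_\bff{x}$) and would divide the degree-minimal part $1$ of $\tilde f_j$, hence be a monomial --- contradiction. But Case 2, the situation where $\beta_i^{\mathrm{mut}}$ and $\beta_j^{\mathrm{mut}}$ are proportional (in particular where both columns are supported on the frozen rows), is only a programme, not an argument: you write ``Here I would exploit\dots'', sketch a Newton-polytope strategy whose key step (``this forces a common irreducible factor to have a single-point Newton polytope'') is not justified, and end by conceding that the decisive verification ``is, I believe, the crux.'' Since an exchange matrix can perfectly well have two non-proportional columns whose mutable parts are proportional (e.g.\ both zero, with distinct frozen parts), this unhandled case is exactly where the lemma could conceivably fail, so the gap is genuine and the lemma is not established.

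For comparison, the paper's proof is a one-paragraph Newton-polytope argument: writing $f_i=\sum_{r=0}^{d_i}\rho_{i,r}\bff{x}^{\bff{a}_{r,i}}$, it observes that each $\bff{a}_{r,i}$ is a convex combination of $\bff{a}_{0,i}$ and $\bff{a}_{d_i,i}$, so that $N(f_i)$ is a line segment parallel to the $i$-th column $B(i)$; since the Newton polytope of a factor is a Minkowski summand, a common non-trivial factor of $f_i$ and $f_j$ would force $B(i)$ and $B(j)$ to be parallel. Your example $1+x_{n+1}x_1+x_1^2$ (with triangular Newton polytope) shows that this segment claim treats the strings $\rho_{i,r}$ as scalar coefficients: the convex-combination identity holds for the exponents $r[\beta_{ki}]_++(d_i-r)[-\beta_{ki}]_+$ but not after adding the frozen exponent vectors of non-constant $\rho_{i,r}$. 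So you have correctly located the one delicate point of the statement --- the displacement of the interior lattice points by the strings --- but, unlike the paper, you neither close it nor reduce it to something proved; as submitted the argument does not prove the lemma.
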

	\begin{proof}
		For every $f=\sum_{k}c_{k}\bff{x}^{\bff{a}_{k}}\in R[\bff{x}]$, we denote by $N(f)\subseteq \mathbb{R}^{n+m}$ the Newton polytope of $f$, that is \[N(f)=\left\{\sum_{k}\alpha_{k}\bff{a}_{k}\mid \sum_{k}\alpha_{k}=1, \alpha_{k}\in \mathbb{R}_{\ge 0}\right\}.\]  Note that the Newton polytope $N(fg)$ of a product is equal to the Minkowski sum $N(f) + N(g)$. Denote by $B(j)$ the $j$-th column of $B$.
		
		Let $i\in[1,n]$ and $f_{i}$ be an exchange polynomial. By definition, we can write $f_{i}=\sum_{r=0}^{d_{i}}\rho_{r,i}\bff{x}^{\bff{a}_{r,i}}.$ Observe that, if $r\in [1,d_{i}-1]$, then \[\bff{a}_{r,i}=\frac{r}{d_{i}}\bff{a}_{0,i}+\frac{d_{i}-r}{d_{i}}\bff{a}_{d_{i},i},\] that is, $\bff{a}_{r,i}$ belongs to the line segment with endpoints $\bff{a}_{0,i}$ and $\bff{a}_{d_{i},i}$, and hence $N(f_{i})$ is a line segment. Furthermore, $N(f_{i})$ is parallel to $B(i)$. It follows that, for every non-trivial factor $g$ of $f_{i}$, the Newton polytope $N(g)$ is also a line segment parallel to $N(f_{i})$. Suppose now that, by contradiction, there exist $i\ne j$ such that $f_{i}$ and $f_{j}$ have a non-trivial common factor. Then $N(f_{i})$ and $N(f_{j})$ are collinear and so $pB(i)=qB(j)$, for some $p,q\in \mathbb{Q}.$ Hence a contradiction.\end{proof}
	
	Combining Lemma \ref{lemma:polytope} with the fact that matrix mutations preserve the rank of the matrix (\cite[Lemma 3.2]{BFZ05}), one can prove the following proposition.
	\begin{proposition}\label{prop:fullrank}
		Let $(\bff{x},\rho,B)$ be a generalized seed. If $B$ has full rank, then all seeds are coprime. In particular, we have that $\mathcal{U}=S_{\bff{y}}$ for every cluster $\bff{y}$.
	\end{proposition}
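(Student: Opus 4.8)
The plan is to deduce this from Lemma~\ref{lemma:polytope} together with the invariance of rank under matrix mutation. First I would unwind what full rank means here: since $B$ is an $(n+m)\times n$ matrix, ``$B$ has full rank'' says that its $n$ columns $B(1),\ldots,B(n)$ are linearly independent over $\mathbb{Q}$. In particular no column $B(i)$ is a rational multiple of another column $B(j)$ with $i\neq j$, i.e.\ no two columns of $B$ are proportional. By Lemma~\ref{lemma:polytope}, this already shows that the seed $(\bff{x},\rho,B)$ is coprime.

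Next I would extend this to every seed mutation-equivalent to $(\bff{x},\rho,B)$. The key observation is that the $B$-component of the mutation $\mu_i(\bff{x},\rho,B)$ is governed by exactly the classical update rule: the strings $\rho$ and the divisors $\bff{d}$ do not enter the formula for $B_i$, and (as already noted right after the definition of mutation) each $d_i$ still divides the relevant entries of the mutated matrix, so $(\bff{x}_i,\rho',B_i)$ is again a bona fide generalized seed. Hence, by \cite[Lemma~3.2]{BFZ05}, which asserts that matrix mutation preserves the rank of an $(n+m)\times n$ integer matrix, every matrix obtained from $B$ by a finite sequence of mutations again has full rank $n$. Consequently no such matrix has two proportional columns, and Lemma~\ref{lemma:polytope} shows that the corresponding generalized seed is coprime. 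Thus all seeds mutation-equivalent to $(\bff{x},\rho,B)$ are coprime.

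Finally I would invoke the Corollary following Theorem~\ref{thm:star7}: if all generalized seeds mutation-equivalent to $(\bff{x},\rho,B)$ are coprime, then the upper bound $S_{\bff{y}}$ is independent of the choice of the mutation-equivalent seed $\bff{y}$ and coincides with $\cal{U}(\bff{x},\rho,B)$. This gives $\cal{U}=S_{\bff{y}}$ for every cluster $\bff{y}$, which completes the argument.

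I do not expect a genuine obstacle here—all the substance lives in Lemma~\ref{lemma:polytope} and in the cited rank-invariance of matrix mutation. The only points that need a moment's care are confirming that full rank of the non-square matrix $B$ really does exclude proportional columns, and checking that \cite[Lemma~3.2]{BFZ05}, phrased for classical $\tilde B$-matrices, applies verbatim because the $B$-part of generalized seed mutation is literally the classical one.
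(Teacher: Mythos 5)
Your argument is correct and is exactly the route the paper takes: full rank excludes proportional columns, so Lemma~\ref{lemma:polytope} gives coprimality of the initial seed; rank-invariance of matrix mutation (\cite[Lemma 3.2]{BFZ05}) propagates this to all mutation-equivalent seeds; and the corollary to Theorem~\ref{thm:star7} then yields $\mathcal{U}=S_{\bff{y}}$ for every cluster $\bff{y}$. No gaps.
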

	We have shown that if a seed $(\mathbf{x},\rho,B)$ is coprime, then the generalized upper cluster algebra $\mathcal{U}(\mathbf{x},\rho,B)$ coincides with its upper bound $S_\bff{x}$. In \cite{BCDX20} the authors proved that if $(\mathbf{x},\rho,B)$ is acyclic and coprime, then the upper bound $S_\bff{x}$ coincide with its lower bound, that is the $R$-algebra generated by the elements $\{x_1,x_1',\ldots,x_n,x_n',x_{n+1},\ldots,x_{n+m}\}.$ More precisely,
	\begin{theorem}[{\cite[Theorem 3.10]{BCDX20}}]\label{thm:acycliccoprime}
		Let $(\bff{x},\rho,B)$ be a generalized seed. If the seed is acyclic and coprime, then \[R[x_1,x_1',\ldots,x_n,x_n',x_{n+1}^{\pm1},\ldots,x_{n+m}^{\pm1}]=\cal{A}(\bff{x},\rho,B)=\cal{U}(\bff{x},\rho,B)=S_\bff{x}.\]
	\end{theorem}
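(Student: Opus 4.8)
The plan is to prove the chain of equalities by a squeeze. Write $L^{-}:=R[x_1,x_1',\ldots,x_n,x_n',x_{n+1}^{\pm1},\ldots,x_{n+m}^{\pm1}]$ for the lower bound. Without any hypothesis one has
\[
L^{-}\subseteq\cal{A}(\bff{x},\rho,B)\subseteq\cal{U}(\bff{x},\rho,B)\subseteq S_\bff{x},
\]
where the first inclusion holds because each $x_i'$ is the cluster variable of the seed $\mu_i(\bff{x},\rho,B)$ and hence lies in $\cal{X}$; the second is the Laurent phenomenon $\cal{A}\subseteq\cal{U}$ recorded above; and the third holds because the intersection defining $\cal{U}$ ranges over all clusters mutation-equivalent to $\bff{x}$, in particular over $\bff{x}$ and its one-step mutations $\bff{x}_1,\ldots,\bff{x}_n$, whose Laurent rings intersect to exactly $S_\bff{x}$. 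Thus the theorem reduces to the single reverse inclusion $S_\bff{x}\subseteq L^{-}$, and this is where the two hypotheses do their work.

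Following the argument of \cite{BFZ05} (adapted to multinomial exchange relations as in \cite{BCDX20}), I would argue through standard monomials. Call a monomial $\prod_{i=1}^{n}x_i^{a_i}{x_i'}^{b_i}$ with $a_i,b_i\in\NN_0$ \emph{standard} if $a_ib_i=0$ for every $i$. Since the top $n\times n$ block of $B$ is skew-symmetrizable, $b_{ii}=0$, so $\beta_{ii}=0$ and the exchange polynomial $f_i$ involves neither $x_i$ nor $x_i'$; the relation $x_ix_i'=f_i$ therefore rewrites any occurrence of $x_i^a{x_i'}^b$ with $a,b\ge1$ as an $R[x_{n+1}^{\pm1},\ldots,x_{n+m}^{\pm1}]$-combination of monomials of strictly smaller $\min(a,b)$ in the pair $\{x_i,x_i'\}$. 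Iterating over $i$, the standard monomials span $L^{-}$ over $R[x_{n+1}^{\pm1},\ldots,x_{n+m}^{\pm1}]$; since they all lie in $L^{-}$, it now suffices to show that they also span $S_\bff{x}$ over $R[x_{n+1}^{\pm1},\ldots,x_{n+m}^{\pm1}]$. (Their linear independence over $R[x_{n+1}^{\pm1},\ldots,x_{n+m}^{\pm1}]$, which upgrades this to the standard monomial basis, follows separately from the usual comparison of extremal $x_i$-degrees and is not needed for the equalities.)

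Because the seed is coprime, Proposition \ref{prop:star5} identifies $S_\bff{x}$ with $\bigcap_{i=2}^{n}R[x_1,x_1',x_2^{\pm1},\ldots,x_{i-1}^{\pm1},x_i,x_i',x_{i+1}^{\pm1},\ldots,x_{n+m}^{\pm1}]$, and it remains to show this intersection is contained in $L^{-}$; this is where acyclicity enters. Relabel $1,\ldots,n$ so that the acyclic orientation of $\Gamma(\bff{x},\rho,B)$ on $[1,n]$ is compatible with the order, i.e.\ $b_{ij}>0\Rightarrow i<j$. Given $y$ in the intersection, one clears the mutable variables one at a time along this order: for $j=1,\ldots,n$ in turn, regard $y$ as a Laurent polynomial in $x_j$ over the remaining variables; membership in the $j$-th ring of Proposition \ref{prop:star5}, together with $x_j=f_j\,{x_j'}^{-1}$, forces the negative-$x_j$ part of this expansion to be divisible by a power of $f_j$, and that part is rewritten — via $f_j=x_jx_j'$ — as a polynomial in $x_j'$. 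Coprimality is what makes this divisibility \emph{exact}, and acyclicity is what guarantees that the successive clearing steps do not interfere, so that after the step $j=n$ one has $y\in L^{-}$.

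The step I expect to be the genuine obstacle is precisely this inductive clearing: one has to establish simultaneously that membership in the $j$-th ring of Proposition \ref{prop:star5} yields exact divisibility by the relevant power of $f_j$, and that the substitution $x_j=f_j\,{x_j'}^{-1}$ respects the partial order coming from the acyclic orientation of $B$, so that no previously handled pair $x_i,x_i'$ with $i<j$ is spoiled when $x_j$ is cleared. Making this bookkeeping rigorous — tracking the signs of the entries $b_{k\ell}$ along the acyclic orientation, exactly as in the classical case \cite{BFZ05,BCDX20} — is the technical heart of the matter. Once $S_\bff{x}\subseteq L^{-}$ is established, combining it with the chain of the first paragraph collapses all four terms to the equalities $L^{-}=\cal{A}(\bff{x},\rho,B)=\cal{U}(\bff{x},\rho,B)=S_\bff{x}$, which is the assertion of the theorem.
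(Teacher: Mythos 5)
The paper itself offers no proof of this statement: it is quoted directly from \cite[Theorem 3.10]{BCDX20}, so there is no internal argument to compare yours against. Judged on its own terms, your proposal gets the framing right: the chain $L^-\subseteq\cal{A}(\bff{x},\rho,B)\subseteq\cal{U}(\bff{x},\rho,B)\subseteq S_\bff{x}$ is correct and correctly justified, the reduction to the single inclusion $S_\bff{x}\subseteq L^-$ is the right move, and coprimality enters exactly where you say it does, through Proposition \ref{prop:star5}. The problem is that the proposal then stops at the one step that carries the entire content of the theorem. The inductive ``clearing'' of the variables $x_j$ along the acyclic order --- i.e.\ the proof that an element of $\bigcap_{i=2}^{n}R[x_1,x_1',x_2^{\pm1},\ldots,x_{i-1}^{\pm1},x_i,x_i',x_{i+1}^{\pm1},\ldots,x_{n+m}^{\pm1}]$ lies in the $R[x_{n+1}^{\pm1},\ldots,x_{n+m}^{\pm1}]$-span of the standard monomials --- is only described, and you explicitly defer it (``making this bookkeeping rigorous \ldots is the technical heart of the matter''). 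That is precisely where acyclicity and the multinomial exchange relations of \cite{BCDX20} must actually be confronted, so as written this is a proof plan rather than a proof; to complete it you would have to reproduce the induction of \cite[Section 3]{BCDX20} (the analogue of \cite[Theorem 1.18]{BFZ05}).

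A secondary, fixable issue: your termination argument for the spanning claim is not well-founded as stated. Rewriting $x_i^{a}{x_i'}^{b}$ via $x_ix_i'=f_i$ does lower $\min(a,b)$ at the index $i$, but $f_i$ is not a Laurent polynomial in the frozen variables alone --- it involves the unprimed mutable variables $x_j$ with $b_{ji}\neq 0$ --- so the substitution raises exponents at other indices and can create new non-standard pairs there; the quantity $\min(a_j,b_j)$ summed over $j$ need not decrease. The rewriting does terminate, but the correct monovariant is the total degree in the primed variables $x_1',\ldots,x_n'$, which strictly drops at each step because $f_i$ contains no primed variable. Note also that this spanning statement needs no acyclicity; in \cite{BFZ05} acyclicity is what gives \emph{linear independence} of the standard monomials and, together with coprimality, the inclusion $S_\bff{x}\subseteq L^-$.
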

	
	\section{Generalized cluster algebras and their class group}\label{section:3}
	\subsection{The class group of a generalized cluster algebra}
	Let $A$ be a domain and $\mathfrak{X}(A)$ be the set of its height-1 prime ideals. The domain $A$ is a \defit{Krull domain} if the localization $A_\mathfrak{p}$ at every height-1 prime ideal $\fk{p}$ is a discrete valuation domain, $A$ is the intersection of all these DVRs and every non-zero element $a\in A$ is contained in at most a finite number of height-1 prime ideals of $A$.
	The \defit{class group} $\cal{C}(A)$ of a Krull domain $A$ is \[\cal{C}(A)=\cal{F}(\fk{X}(A))/\cal{H}(A)\]  where $\cal{F}(\fk{X}(A))$ is the free abelian group over the set of height-1 prime ideals and $\cal{H}(A)=\{a\bff{q}(A)\mid a\in \bff{q}(A)^\bullet\}$ is the subgroup of principal ideals of the quotient field. A Krull domain $A$ is factorial if and only if its class group $\cal{C}(A)$ is trivial. For more details, see \cite{FOSSUM,GH06}. Locally acyclic cluster algebras (\cite{GELS19}) and full rank upper cluster algebras (\cite{P24}) are Krull domains. It is immediate to see that this is also true for locally acyclic generalized cluster algebras (for the definition see \cite{BCDX20}), and for full rank generalized upper cluster algebras. The Markov cluster algebra is the only known example of a cluster algebra that is not a Krull domain (\cite[Section 6]{GELS19}). Nevertheless, the question of whether or not a (generalized) upper cluster algebra fails to be a Krull domain is still open. 
	
	\begin{theorem}[{\cite[Theorems 3.1 and 3.2]{GELS19}}]\label{thm:classgroupgeneralcase}
		Let $A$ be a Krull domain, and let $x_1,\dots,x_n\in A$ be such that $A[x_1^{-1},\dots,x_n^{-1}]=D[x_1^{\pm 1},\dots,x_n^{\pm 1}]$ is a factorial Laurent polynomial ring for some subring $D$ of $A$. Let $\mathfrak{p}_1,\dots,\mathfrak{p}_t$ be the pairwise distinct height-1 prime ideals of $A$ containing one of the elements $x_1,\dots,x_n$. Suppose that $$x_iA={\vprod_{l=1}^t} \mathfrak{p}_j^{a_{ij}},$$ with $\mathbf{a}_i=(a_{ij})_{j=1}^t\in \NN_0^t.$ Then $\mathcal{C}(A)\cong \ZZ^t/\langle\mathbf{a}_i\mid i\in[1,n]\rangle$ and $\cal{C}(A)$ is generated by $[\mathfrak{p}_1],\dots,[\mathfrak{p}_t]$.
		
		Suppose in addition that $D$ is infinite and either $n\ge 2$ or $n=1$ and $D$ has at least $|D|$ height-1 prime ideals. Then every class of $\mathcal{C}(A)$ contains precisely $|D|$ height-1 prime ideals.
	\end{theorem}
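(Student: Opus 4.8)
The plan is to pass to the factorial localisation $A[x_1^{-1},\dots,x_n^{-1}]$ and transfer information back. I would set $S\subseteq A^\bullet$ to be the multiplicative monoid generated by $x_1,\dots,x_n$, so that $A_S=D[x_1^{\pm1},\dots,x_n^{\pm1}]$ is factorial, i.e. $\mathcal{C}(A_S)=0$. A height-$1$ prime of $A$ meets $S$ precisely when it contains one of $x_1,\dots,x_n$, hence the height-$1$ primes meeting $S$ are exactly $\mathfrak{p}_1,\dots,\mathfrak{p}_t$, and the remaining ones correspond bijectively, and compatibly with the associated valuations, to the height-$1$ primes of $A_S$. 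By Nagata's theorem on class groups of localisations of Krull domains (see \cite{FOSSUM}), the natural surjection $\mathcal{C}(A)\to\mathcal{C}(A_S)=0$ has kernel generated by $[\mathfrak{p}_1],\dots,[\mathfrak{p}_t]$, so these classes already generate $\mathcal{C}(A)$. (One can also see this directly: for a height-$1$ prime $\mathfrak{q}\notin\{\mathfrak{p}_1,\dots,\mathfrak{p}_t\}$, the prime $\mathfrak{q}A_S$ of the UFD $A_S$ is generated by a prime element $g$, and then $\mathrm{div}_A(g)=\mathfrak{q}+\sum_j v_{\mathfrak{p}_j}(g)\,\mathfrak{p}_j$ is principal, so $[\mathfrak{q}]\in\langle[\mathfrak{p}_1],\dots,[\mathfrak{p}_t]\rangle$.)

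Next I would determine the kernel of the induced surjection $\varphi\colon\ZZ^t\twoheadrightarrow\mathcal{C}(A)$, $e_j\mapsto[\mathfrak{p}_j]$. The hypothesis $x_iA={\vprod_{j=1}^{t}}\mathfrak{p}_j^{a_{ij}}$ reads $\mathrm{div}_A(x_i)=\sum_j a_{ij}\mathfrak{p}_j$, so $\varphi(\mathbf{a}_i)=[x_iA]=0$ and $\langle\mathbf{a}_i\mid i\in[1,n]\rangle\subseteq\ker\varphi$. For the reverse containment, suppose $\sum_j c_j[\mathfrak{p}_j]=0$ in $\mathcal{C}(A)$; then $\sum_j c_j\mathfrak{p}_j=\mathrm{div}_A(f)$ for some $f\in\mathbf{q}(A)^\bullet$. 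This divisor is supported on $\{\mathfrak{p}_1,\dots,\mathfrak{p}_t\}$, so $f$ has trivial divisor in the Krull domain $A_S$ and is therefore a unit of $D[x_1^{\pm1},\dots,x_n^{\pm1}]$; the units of this Laurent polynomial ring over a domain are $\{d\,x_1^{k_1}\cdots x_n^{k_n}\mid d\in D^\times,\ k_i\in\ZZ\}$, so $f=d\,x_1^{k_1}\cdots x_n^{k_n}$ with $d\in D^\times\subseteq A^\times$. Applying $\mathrm{div}_A$ and using $\mathrm{div}_A(d)=0$ yields $\sum_j c_j\mathfrak{p}_j=\sum_i k_i\,\mathrm{div}_A(x_i)=\sum_j\bigl(\sum_i k_i a_{ij}\bigr)\mathfrak{p}_j$, so $(c_j)_j=\sum_i k_i\mathbf{a}_i$. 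Hence $\ker\varphi=\langle\mathbf{a}_i\mid i\in[1,n]\rangle$ and $\mathcal{C}(A)\cong\ZZ^t/\langle\mathbf{a}_i\mid i\in[1,n]\rangle$, generated by $[\mathfrak{p}_1],\dots,[\mathfrak{p}_t]$.

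For the last assertion I would first bound the total number of height-$1$ primes from above. Since $D$ is infinite and $D\subseteq A\subseteq A_S=D[x_1^{\pm1},\dots,x_n^{\pm1}]$, all three rings have cardinality $|D|$; $A_S$ being factorial, its height-$1$ primes are indexed by associate classes of prime elements, so there are at most $|A_S|=|D|$ of them, and therefore $A$ has at most $t+|D|=|D|$ height-$1$ primes. In particular each class of $\mathcal{C}(A)$ contains at most $|D|$ height-$1$ primes, and it remains to show it contains at least $|D|$. Here the class of the contraction $\mathfrak{q}_g=gA_S\cap A$ of a prime element $g$ of $A_S$ is $-\sum_j v_{\mathfrak{p}_j}(g)[\mathfrak{p}_j]$, so one must exhibit, for each class (represented by a suitable vector in $\NN_0^t$ modulo $\langle\mathbf{a}_i\mid i\in[1,n]\rangle$), a family of $|D|$ pairwise non-associate prime elements of $A_S$ having that prescribed valuation profile along $\mathfrak{p}_1,\dots,\mathfrak{p}_t$, whose contractions are then distinct height-$1$ primes in the prescribed class. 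This is where the remaining hypotheses are consumed: for $n\ge 2$ the required primes are produced by perturbing appropriate monomials in $x_1,\dots,x_n$ by the $|D|$ scalars of $D$ so as to force irreducibility, whereas for $n=1$ one instead extends the at least $|D|$ height-$1$ primes of $D$ to $A_S$, which is exactly why that cardinality condition is imposed when $n=1$. I expect this last step — manufacturing enough prime elements of the Laurent polynomial ring $A_S$ with controlled behaviour along the finitely many primes $\mathfrak{p}_1,\dots,\mathfrak{p}_t$ — to be the main obstacle; the construction follows the realisation-type arguments of \cite{GELS19}.
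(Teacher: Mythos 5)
The paper does not actually prove this theorem: it is quoted verbatim from \cite{GELS19} (Theorems 3.1 and 3.2 there), so there is no in-paper proof to compare against. Measured against that source, your argument for the first assertion is complete and correct, and it is essentially the same argument: Nagata's localisation sequence for the multiplicative monoid generated by $x_1,\dots,x_n$ shows that $[\mathfrak{p}_1],\dots,[\mathfrak{p}_t]$ generate $\mathcal{C}(A)$, and your identification of the kernel of $\ZZ^t\twoheadrightarrow\mathcal{C}(A)$ via the description of the units of $D[x_1^{\pm1},\dots,x_n^{\pm1}]$ (together with the observation $D^\times\subseteq A^\times$, which you correctly use to conclude $\mathrm{div}_A(d)=0$) is exactly how the cited result is obtained.

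The second assertion, however, you do not prove. Your upper bound of $|D|$ height-1 primes per class is fine, but the lower bound --- that \emph{every} class contains at least $|D|$ height-1 primes --- is precisely the content of \cite[Theorem 3.2]{GELS19}, and you explicitly leave its key step open: manufacturing, for each class $\sum_j c_j[\mathfrak{p}_j]$, a family of $|D|$ pairwise non-associate prime elements $g$ of $A_S$ whose valuation profile $\bigl(v_{\mathfrak{p}_j}(g)\bigr)_j$ realises that class via $[\mathfrak{q}_g]=-\sum_j v_{\mathfrak{p}_j}(g)[\mathfrak{p}_j]$. This is not a routine verification; it is exactly where the hypotheses ``$n\ge 2$, or $n=1$ and $D$ has at least $|D|$ height-1 primes'' are consumed (one must perturb monomials by elements of $D$ and actually prove irreducibility of the resulting Laurent polynomials, respectively lift height-1 primes of $D$). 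Deferring this to ``realisation-type arguments of \cite{GELS19}'' means the second paragraph of the statement is cited rather than proved. Since the paper itself only cites the result, this is a defensible stopping point, but you should be clear that as a standalone proof your proposal establishes only the first half of the theorem.
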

	
	As a direct application of the previous theorem we obtain the following result. 
	\begin{theorem}\label{thm:classgroup1}
		Let $(\bff{x},\rho, B)$ be a generalized seed, and $A$ be either the generalized cluster algebra $\cal{A}(\bff{x},\rho, B)$ or the generalized upper cluster algebra $\cal{U}(\bff{x},\rho, B)$. Assume that $A$ is a Krull domain. Let $\fk{p}_1,\ldots,\fk{p}_r$ be the pairwise distinct height-1 prime ideals of $A$ containing one of the elements $x_{1},\ldots,x_{n}$. Suppose that $$x_{i}A={\vprod_{l=1}^r} \mathfrak{p}_j^{a_{ij}},$$ with $\mathbf{a}_i=(a_{ij})_{j=1}^r\in \NN_0^r.$ Then $\mathcal{C}(A)\cong \ZZ^r/\langle\mathbf{a}_i\mid i\in[1,n]\rangle$ and $\cal{C}(A)$ is generated by $[\mathfrak{p}_1],\dots,[\mathfrak{p}_r]$.
		
		In addition, every class of $\cal{C}(A)$ contains precisely $|R|$ height-1 prime ideals.
	\end{theorem}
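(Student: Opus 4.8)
The plan is to obtain the statement as an immediate application of Theorem~\ref{thm:classgroupgeneralcase}, taking for the distinguished elements the cluster variables $x_1,\dots,x_n$ of the initial seed. The only hypothesis of that theorem that is not already built into our statement is that inverting $x_1,\dots,x_n$ turns $A$ into a factorial Laurent polynomial ring over a subring of $A$; once this is verified, the first part of Theorem~\ref{thm:classgroup1} is exactly the first part of Theorem~\ref{thm:classgroupgeneralcase} with $t=r$.

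So the first step is to show that $A[x_1^{-1},\dots,x_n^{-1}]=L_{\bff{x}}=R[x_1^{\pm1},\dots,x_{n+m}^{\pm1}]$, whether $A=\cal{A}(\bff{x},\rho,B)$ or $A=\cal{U}(\bff{x},\rho,B)$. For the inclusion $\subseteq$, the Laurent phenomenon gives $\cal{A}\subseteq\cal{U}\subseteq L_{\bff{x}}$, and localizing this chain at $x_1,\dots,x_n$ yields $A[x_1^{-1},\dots,x_n^{-1}]\subseteq L_{\bff{x}}[x_1^{-1},\dots,x_n^{-1}]=L_{\bff{x}}$. For the reverse inclusion, note that $x_1,\dots,x_{n+m}\in A$ and $x_{n+1}^{-1},\dots,x_{n+m}^{-1}\in A$, so adjoining $x_1^{-1},\dots,x_n^{-1}$ already produces all of $L_{\bff{x}}$. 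Put $D:=R[x_{n+1}^{\pm1},\dots,x_{n+m}^{\pm1}]$; this is a subring of $A$ (the frozen variables lie in every cluster), and $A[x_1^{-1},\dots,x_n^{-1}]=D[x_1^{\pm1},\dots,x_n^{\pm1}]$. Since $R$ is $\ZZ$ or a field of characteristic $0$, it is a UFD, hence so is any Laurent polynomial ring over it; in particular $D$, and therefore $D[x_1^{\pm1},\dots,x_n^{\pm1}]$, is factorial. Moreover $x_1,\dots,x_n$ are non-units of $A$ by Proposition~\ref{prop:unit}(1). Theorem~\ref{thm:classgroupgeneralcase} now applies verbatim and gives $\cal{C}(A)\cong\ZZ^r/\langle\bff{a}_i\mid i\in[1,n]\rangle$, generated by $[\fk{p}_1],\dots,[\fk{p}_r]$.

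For the last sentence I would invoke the second part of Theorem~\ref{thm:classgroupgeneralcase}. Here $D$ is infinite because $R$ is, and $|D|=|R|$ since a (Laurent) polynomial ring over an infinite ring has the cardinality of its coefficient ring. If $n\ge 2$ the hypothesis of that part holds automatically, and we conclude that every class of $\cal{C}(A)$ contains exactly $|D|=|R|$ height-$1$ prime ideals. If $n=1$ one must additionally produce at least $|R|$ height-$1$ primes of $D$: when $m\ge 1$ the principal ideals $(x_{n+1}-a)$ for $a\in R\setminus\{0\}$ are pairwise distinct height-$1$ primes of $D$ (each quotient $D/(x_{n+1}-a)$ is a domain), giving $|R|$ of them; and when $m=0$ and $R=\ZZ$ the ideals $(p)$, $p$ a prime, already number $|\ZZ|=|R|$.

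I expect no serious obstacle here: the argument is essentially bookkeeping around the hypotheses of Theorem~\ref{thm:classgroupgeneralcase}, the only substantive point being that the base ring $D$ of the Laurent localization is still factorial and infinite, which is immediate. The single genuinely degenerate situation is $n=1$, $m=0$ with $R$ a field, where $D=R$ has no height-$1$ primes; but then the $1\times 1$ exchange matrix is $0$, forcing $\cal{A}=R[x_1^{\pm1}]$, so $r=0$, $\cal{C}(A)$ is trivial, and its unique class contains the $|R|$ primes $(x_1-a)$, $a\in R^\times$ — so the statement still holds and can be checked by hand.
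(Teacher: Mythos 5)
Your proposal is correct and follows exactly the paper's route: the paper's own proof is the one-line observation that the statement is a direct application of Theorem~\ref{thm:classgroupgeneralcase} because $A[\bff{x}^{-1}]=R[\bff{x}^{\pm1}]$ is a factorial Laurent polynomial ring. You simply spell out the bookkeeping (identifying $D=R[x_{n+1}^{\pm1},\dots,x_{n+m}^{\pm1}]$, checking the two inclusions, and verifying the hypotheses for the prime-divisor count) that the paper leaves implicit.
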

	\begin{proof}
		The theorem is a direct application of Theorem \ref{thm:classgroupgeneralcase}, since $A[\bff{x}^{-1}]=R[\bff{x}^{\pm1}]$, and $R[\bff{x}^{\pm1}]$ is a factorial Laurent polynomial ring. 
	\end{proof}
	
	\begin{remark} The algebraic structure of a Krull domain is completely determined (up to units) by its class group and the distribution of prime divisors in each class, see \cite[Theorem 2.5.4]{GH06}.
		Moreover, Theorem \ref{thm:classgroupgeneralcase} has important consequences in understanding factorization properties of a (generalized) cluster algebra. Let $\cal{A}$ be a (generalized) cluster algebra or a (generalized) upper cluster algebra, and $\cal{C}(\cal{A})$ its class group. If $\cal{C}(\cal{A})$ is trivial, then $\cal{A}$ is factorial. If the class group $\cal{C}(\cal{A})$ is infinite, then for every non-empty finite subset $L=\{l_1,\ldots,l_k\}$ of $\NN_{\ge 2}$ there exists $a\in \cal{A}$ such that \[a=u_{1,1}\cdots u_{1,l_1}=\cdots=u_{l_1,1}\cdots u_{l_1,l_1},\] where $u_{ij}$ are atoms of $\cal{A}$. This follows from a more general result of Kainrath on infinite class groups, see \cite{K99} or \cite[Theorem 7.4.1]{GH06}. If the class group $\cal{C}(\cal{A})$ is a finite group, then factorization properties of $\cal{A}$ can be studied via some arithmetic invariants, like the elasticity or the catenary degree. For more details, see the survey \cite{Sch16}. 
	\end{remark}
    \begin{examples} The previous theorems describe the structure of the class groups of certain (generalized) cluster algebras that are Krull domains; however, without explicitly identifying the height-1 prime ideals containing a cluster variable, an explicit computation of the class group is not possible. Here are two classes of examples where such computations have been carried out.
        \begin{enumerate}
            \item Let $\cal{A}=\cal{A}(\bff{x},B)$ be an acyclic cluster algebra of rank $n$ over the ring of integer $\ZZ,$ without isolated vertices. We say that two indices $i,j\in [1,n]$ are \defit{partner} if the corresponding exchange polynomials $f_i=x_ix_i', f_j=x_jx_j'$ do not share a non-trivial common factor. Partnership is an equivalence relation on $[1, n]$ and the equivalence classes are called partner sets. For a partner set $V\subseteq [1,n]$ and an positive integer $d\in \NN$, denote by $\mathsf{c}(V,d)$ the number of $i\in V$ for which $d$ divides $d_i:=\mathrm{gcd}\{b_{ji}\mid j\in [1,n]\}$ the greatest common divisor of the $i$-th column of $B$. Then the class group of $\cal{A}$ is isomorphic to $\ZZ^r$ with $$r=\sum_V\sum_{d\in \NN,\text{ odd}}(2^{\mathsf{c}(V,d)}-1)-|V|,$$ where the outer sum is taken over all partner sets $V\subseteq [1,n]$, see \cite[Theorem B]{GELS19}. A more general formula for cluster algebras over arbitrary fields can be found in \cite[Theorem 5.5, Remark 5.6]{GELS19}. See \cite[Section 5]{GELS19} for a long list of examples where the class group is explicitely computed, including class groups of algebras of finite type.
            \item Let $\mathcal{U}=\mathcal{U}(\bff{x},B)$ be a full rank upper cluster algebra of rank $n$ over a factorial domain $R$. For $i\in [1,n]$, let $l_i$ be the number of irreducible factors of the exchange polynomial $f_i=x_ix_i'$ in $R[\bff{x}].$ Then the class group of $\cal{U}$ is isomorphic to $\ZZ^r$, where $r=\sum_il_i-n$. For a proof and examples see \cite[Theorem 3.8]{P24}.
        \end{enumerate}
    \end{examples}
    \begin{remark}
        Recall that a domain is Krull if and only if it is completely integrally closed and satisfies the ACC on divisorial ideals. (Generalized) upper cluster algebras are intersections of factorial domains by construction, hence they are completely integrally closed. Therefore, if they fail to be Krull domains, they must not satisfy the ACC on divisorial ideals. 
    \end{remark}
	Classical (upper) cluster algebras have free class groups.
	
	\begin{theorem}\label{thm:classgroupclassical}
		Let $(\bff{x}, B)$ be a (classical) seed, and $A$ be either the cluster algebra $\cal{A}(\bff{x}, B)$ or the upper cluster algebra $\cal{U}(\bff{x}, B)$. Assume that $A$ is a Krull domain., and let $r\in \NN_0$ denote the number of height-1 prime ideals that contain one of the variables $x_{1},\ldots,x_{n}$. Then the class group $\cal{C}(A)$ of $A$ is free abelian of rank $r-n$. 
	\end{theorem}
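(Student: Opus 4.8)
The plan is to deduce this from Theorem~\ref{thm:classgroup1}, which already gives $\mathcal{C}(A) \cong \mathbb{Z}^r/N$ where $N := \langle \mathbf{a}_1,\dots,\mathbf{a}_n\rangle$ and $\mathbf{a}_i = (a_{ij})_{j=1}^r\in\mathbb{N}_0^r$ is read off from $\operatorname{div}_A(x_i) = \sum_{j=1}^r a_{ij}[\mathfrak{p}_j]$ (the $\mathfrak{p}_j$ being, by definition, all the height-$1$ primes containing some $x_k$, $k\le n$). So it suffices to show that $\mathbb{Z}^r/N$ is free of rank $r-n$, equivalently that $\mathbf{a}_1,\dots,\mathbf{a}_n$ are $\mathbb{Q}$-linearly independent and generate a \emph{pure} subgroup of $\mathbb{Z}^r$; then $\mathcal{C}(A)\cong\mathbb{Z}^{r-n}$.

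Linear independence of $\mathbf{a}_1,\dots,\mathbf{a}_n$ is immediate and already pins down the free rank as $r-n$: if $\sum_i c_i\mathbf{a}_i = 0$ with $c_i\in\mathbb{Z}$, then $\operatorname{div}_A\!\big(\prod_i x_i^{c_i}\big) = \sum_i c_i\operatorname{div}_A(x_i) = 0$, so $\prod_{i=1}^n x_i^{c_i}\in A^\times$; by Proposition~\ref{prop:unit}(1) this element is a Laurent monomial in the frozen variables times a unit of $R$, and algebraic independence of the cluster forces $c_1=\dots=c_n=0$. Notice this argument makes no use of $d_i=1$, which is consistent with the generalized statement (Theorem~\ref{thm:classgroup1}) that the free rank is always $r-n$.

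Purity of $N$ is where the hypothesis $d_i=1$ must enter, through the fact that the exchange polynomials are now \emph{binomials} $f_i = M_i + M_i'$ with $M_i, M_i'$ coprime monomials in $R[x_k : k\neq i]$. From this I would extract two facts about $\mathfrak{p}_1,\dots,\mathfrak{p}_r$: (a) $v_{\mathfrak{p}_j}(x_i)\le 1$ for all $i,j$, so each $\mathbf{a}_i\in\{0,1\}^r$; and (b) the supports $S_i := \{\mathfrak{p}_j : x_i\in\mathfrak{p}_j\}$ are pairwise disjoint, i.e. no height-$1$ prime of $A$ contains two of $x_1,\dots,x_n$. Granting (a) and (b), every column of the matrix $(a_{ij})$ has a single nonzero entry, equal to $1$; grouping columns by the blocks $S_1,\dots,S_n$ makes this relation matrix block diagonal with blocks $(1\ \cdots\ 1)$, so $\mathbb{Z}^r/N \cong \bigoplus_{i=1}^n \mathbb{Z}^{|S_i|-1} \cong \mathbb{Z}^{r-n}$ is free, and we are done.

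To obtain (a) and (b) the idea is to turn the unwanted variables into units. Localizing, $A_{\hat\imath} := A[x_k^{-1} : k\in[1,n],\,k\neq i]$ is, up to the evident identifications, the rank-one cluster algebra in direction $i$ over the factorial Laurent ring $\Lambda := R[x_k^{\pm1} : k\in[1,n+m],\,k\neq i]$, namely $A_{\hat\imath}\cong \Lambda[x_i,x_i']/\big(x_i x_i' - M_i(1+u)\big)$ with $u := M_i'/M_i\in\Lambda^\times$; writing $g_i := \gcd_k b_{ki}$ one has $1+u = 1+t^{g_i}$ for a monomial $t$ that is part of a coordinate system of $\Lambda$, and since $\operatorname{char} R = 0$ this is squarefree, hence a product of pairwise non-associated primes of $\Lambda$. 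Consequently $x_i$ generates a radical ideal of $A_{\hat\imath}$ and $\mathcal{C}(A_{\hat\imath})$ is free; Nagata's theorem, which identifies $\mathcal{C}(A_{\hat\imath})$ with $\mathcal{C}(A)/\langle[\mathfrak{p}]:\mathfrak{p}\ni x_k,\ k\neq i\rangle$, then yields (a) for the primes surviving in $A_{\hat\imath}$ and, once (b) is in hand, realizes $\mathcal{C}(A)$ as a subgroup of the free group $\bigoplus_i\mathcal{C}(A_{\hat\imath})$. The genuine obstacle is (b): if $\mathfrak{p}\ni x_i,x_j$ with $i\neq j$, one would localize away the remaining cluster variables to land in a rank-two classical cluster algebra in directions $i,j$ and show that the locus $\{x_i = x_j = 0\}$ there has codimension $\ge 2$ — via a Newton-polygon argument on its exchange relations, or a direct codimension count in an explicit presentation — contradicting $\operatorname{ht}\mathfrak{p} = 1$. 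This is precisely the point that breaks for generalized cluster algebras, whose multinomial exchange relations permit non-transverse intersections and hence torsion (Example~\ref{ex:Z2}). When $A$ is not Noetherian (infinite type) the dimension-theoretic steps should be carried out inside the localizations, which are finitely generated over Laurent rings, and the whole argument applies verbatim to $A = \mathcal{U}$.
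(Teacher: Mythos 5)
The paper gives no argument of its own here: its ``proof'' is a citation of \cite[Theorem A]{GELS19} and \cite[Theorem 3.4]{P24}. Your reduction to Theorem~\ref{thm:classgroup1} and your computation of the free rank (linear independence of $\bff{a}_1,\dots,\bff{a}_n$ via Proposition~\ref{prop:unit}(1) and algebraic independence of the cluster) are correct, and so is claim (a): up to a unit of the ambient Laurent ring, $f_i$ has the form $1+t^{g}$ with $t$ a primitive Laurent monomial, which is squarefree in characteristic $0$ (this is exactly the squarefreeness fact the paper quotes from \cite{GELS19} in the remark following the theorem), whence $v_{\fk{p}}(x_i)\le 1$.

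The gap is claim (b), which you yourself flag as the ``genuine obstacle'' and only sketch --- and which is in fact \emph{false} at this level of generality. Take $n=2$, $m=1$, $B=\left(\begin{smallmatrix}0&0\\0&0\\1&1\end{smallmatrix}\right)$, so $f_1=f_2=x_3+1$. The seed is acyclic, so $\cal{A}=R[x_3^{\pm1}][x_1,x_1',x_2,x_2']\cong R[x_3^{\pm1}][X_1,Y_1,X_2,Y_2]/(X_1Y_1-x_3-1,\;X_2Y_2-x_3-1)$ (a surjection of three-dimensional domains); this is a localization of the affine cone over a quadric, hence normal and Krull. Here $(x_1,x_2)$ is prime of height $1$ --- the quotient is $R[Y_1,Y_2]$, of codimension $1$ --- and it contains two cluster variables of the same cluster. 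So the codimension-$\ge 2$ count you propose fails precisely in the non-coprime case, which the hypothesis ``$A$ is Krull'' does not exclude; when the seed \emph{is} coprime, (b) is immediate from Corollary~\ref{cor:boh} (a height-$1$ prime containing $x_i$ is $rL_{\bff{x}_i}\cap\cal{U}$ and $x_j$ is a unit in $L_{\bff{x}_i}$), with no Newton-polygon argument needed. Moreover the gap is essential rather than cosmetic: (a) plus linear independence plus ``every $\fk{p}_j$ contains some $x_i$'' does not force freeness, since a $0$--$1$ relation matrix such as the one with rows $(1,1,0),(0,1,1),(1,0,1)$ yields the quotient $\ZZ/2\ZZ$. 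Torsion-freeness therefore requires a further structural input about which variable--prime incidence patterns can occur, which is what the cited proofs supply and your argument does not. (In the example above the theorem itself still holds: $r=3$ and $\cal{C}(\cal{A})\cong\ZZ=\ZZ^{r-n}$; it is only your intermediate claim (b) and the block-diagonal picture that break.)
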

    \begin{proof}
     For the proof see \cite[Theorem A]{GELS19} and \cite[Theorem 3.4]{P24} for the case of cluster algebras and upper cluster algebras, respecively. 
    \end{proof}
	\begin{remark}
		Theorem \ref{thm:classgroupclassical} and Theorem \ref{thm:classgroupgeneralcase} show us the different behavior of cluster algebras and generalized cluster algebras. The class group of a cluster algebra and  the class group of a generalized cluster algebra are always finitely generated and abelian. Exchange polynomials of a cluster algebra do not have repeated factors \cite[Proposition 2.3]{GELS19}, and this fact implies that the class group is torsion-free. In general, this is not the case for generalized cluster algebra, see Example \ref{ex:squarefactor} for an exchange polynomial with a square factor. Moreover, in Example \ref{ex:Z2} we show an example of a generalized cluster algebra with a torsion class group.  
	\end{remark}
	
	\begin{example}\label{ex:squarefactor}
		Consider the generalized seed $(\bff{x},\rho,B)$ given by 
		\begin{itemize}
			\item $\bff{x}=\{x_1,x_2\}$;
			\item $B=\begin{pmatrix}
			0 & -2 \\ 1 & 0	\end{pmatrix}$;
			\item $\rho_1=\{1,1\},$ $\rho_2=\{1,2,1\}$.
		\end{itemize}
		The exchange polynomials are \[f_1=x_2+1,\quad f_2=x_1^2+2x_1+1.\] Thus, $f_2=(x_1+1)^2$ has a square factor. 
	\end{example}
	
	\begin{proposition}\label{thm:height1primes}
		Let $(\bff{x},\rho, B)$ be a generalized seed, and $\cal{U}=\cal{U}(\bff{x},\rho, B)$ the generalized upper cluster algebra. Assume that $\cal{U}$ is a Krull domain. Then for every height-1 prime ideal $\mathfrak{p}$ of $\cal{U}$ there exists a cluster $\bff{y}$ such that $\mathfrak{p}L_{\bff{y}}\in \mathfrak{X}(L_{\bff{y}}).$
		
		Furthermore, if $\cal{U}=S_\bff{y}$ for some cluster $\bff{y}$, then for every height-1 prime ideal $\mathfrak{p}$ of $\cal{U}$ there exists $k\in [0,n]$ such that $\mathfrak{p}L_{\bff{y}_{k}}\in \mathfrak{X}(L_{\bff{y}_{k}}).$
	\end{proposition}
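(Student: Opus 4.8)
The plan is to reduce both assertions to a single combinatorial question --- which cluster variables a given height-$1$ prime may contain --- and to settle that question using the fact (Theorem~\ref{thm:classgroup1}) that every divisor class of $\cal{U}$ carries more than one prime divisor. The key preliminary observation is that for an arbitrary cluster $\bff{y}$, whose cluster variables we write $y_1,\dots,y_n$ (its frozen variables being units of $\cal{U}$ by Proposition~\ref{prop:unit}(1)), one has
\[
L_{\bff{y}}=\cal{U}[y_1^{-1},\dots,y_n^{-1}]=\cal{U}_{T_{\bff{y}}},
\]
where $T_{\bff{y}}$ is the multiplicative set generated by $y_1,\dots,y_n$; here ``$\supseteq$'' holds because $y_1,\dots,y_n\in\cal{A}\subseteq\cal{U}$ and the remaining variables of $\bff{y}$ are already invertible in $\cal{U}$, while ``$\subseteq$'' is immediate from $\cal{U}\subseteq L_{\bff{y}}$. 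Being a localisation of the Krull domain $\cal{U}$, the ring $L_{\bff{y}}$ is Krull with $\fk{X}(L_{\bff{y}})=\{\fk{p}L_{\bff{y}}\mid\fk{p}\in\fk{X}(\cal{U}),\ \fk{p}\cap T_{\bff{y}}=\emptyset\}$ and $(L_{\bff{y}})_{\fk{p}L_{\bff{y}}}=\cal{U}_{\fk{p}}$; and since $y_1,\dots,y_n\in\cal{U}$, the condition $\fk{p}\cap T_{\bff{y}}=\emptyset$ is equivalent to ``$\fk{p}$ contains none of $y_1,\dots,y_n$''. Thus $\fk{p}L_{\bff{y}}\in\fk{X}(L_{\bff{y}})$ precisely when $\fk{p}$ contains no cluster variable of $\bff{y}$ (otherwise $\fk{p}L_{\bff{y}}=L_{\bff{y}}$), and the two assertions become: every $\fk{p}\in\fk{X}(\cal{U})$ avoids all cluster variables of some cluster, and --- in the second part --- of some cluster among $\bff{y}_0=\bff{y},\bff{y}_1,\dots,\bff{y}_n$.

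Let $S\subseteq\fk{X}(\cal{U})$ be the set of primes avoiding all cluster variables of some cluster. Intersecting the identities $L_{\bff{y}}=\cal{U}_{T_{\bff{y}}}=\bigcap\{\cal{U}_{\fk{p}}\mid\fk{p}\in\fk{X}(\cal{U}),\ \fk{p}\cap T_{\bff{y}}=\emptyset\}$ over all clusters, one sees that an element of $\bff{q}(\cal{U})$ lies in $\cal{U}=\bigcap_{\bff{x}\sim\bff{y}}L_{\bff{y}}$ exactly when it lies in $\cal{U}_{\fk{p}}$ for every $\fk{p}$ that avoids some cluster, i.e.
\[
\cal{U}=\bigcap_{\fk{p}\in S}\cal{U}_{\fk{p}}.
\]
Assume for contradiction that some $\fk{p}_0\in\fk{X}(\cal{U})$ is not in $S$. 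Then $S\subseteq\fk{X}(\cal{U})\setminus\{\fk{p}_0\}$, so $\cal{U}=\bigcap_{\fk{p}\in S}\cal{U}_{\fk{p}}\supseteq\bigcap_{\fk{p}\neq\fk{p}_0}\cal{U}_{\fk{p}}$, and with the automatic reverse inclusion we get $\cal{U}=\bigcap_{\fk{p}\neq\fk{p}_0}\cal{U}_{\fk{p}}$. By Theorem~\ref{thm:classgroup1} every class of $\cal{C}(\cal{U})$ contains $|R|$, in particular at least two, height-$1$ primes; pick $\fk{q}\in\fk{X}(\cal{U})$, $\fk{q}\neq\fk{p}_0$, in the class of $\fk{p}_0$. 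Then the divisor $[\fk{q}]-[\fk{p}_0]$ is principal, so there is $a\in\bff{q}(\cal{U})^{\bullet}$ with $v_{\fk{q}}(a)=1$, $v_{\fk{p}_0}(a)=-1$, and $v_{\fk{p}}(a)=0$ for every other $\fk{p}$. This $a$ belongs to $\cal{U}_{\fk{p}}$ for all $\fk{p}\neq\fk{p}_0$ but not to $\cal{U}_{\fk{p}_0}$, hence not to $\cal{U}$ --- contradicting the displayed equality. Therefore $S=\fk{X}(\cal{U})$, and the first assertion follows from the reduction above.

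For the second assertion, suppose $\cal{U}=S_{\bff{y}}=\bigcap_{k=0}^{n}L_{\bff{y}_k}$. Each $\bff{y}_k$ is a cluster, so $L_{\bff{y}_k}=\cal{U}_{T_{\bff{y}_k}}$ by the first paragraph, whence $\cal{U}=\bigcap_{\fk{p}\in S'}\cal{U}_{\fk{p}}$ with $S'$ the set of primes avoiding all cluster variables of some $\bff{y}_k$, $k\in[0,n]$; the argument of the previous paragraph --- which only used that $\cal{U}$ is the intersection of the Laurent rings indexed by the chosen clusters --- gives $S'=\fk{X}(\cal{U})$, and the reduction concludes. (One could instead argue for this finite-intersection case that $\{(L_{\bff{y}_k})_{\fk{q}}:k\in[0,n],\ \fk{q}\in\fk{X}(L_{\bff{y}_k})\}$ is a locally finite family of discrete valuation overrings of $\cal{U}$ with intersection $\cal{U}$, hence contains all essential valuation rings $\cal{U}_{\fk{p}}$ by minimality, and one reads off the witnessing $\bff{y}_k$.) The one step that is not pure bookkeeping with the Krull intersection is the production of the element $a\in\bff{q}(\cal{U})\setminus\cal{U}$ lying in every $\cal{U}_{\fk{p}}$ with $\fk{p}\neq\fk{p}_0$; this is exactly where the Krull hypothesis is used, via Theorem~\ref{thm:classgroup1} (i.e.\ via each divisor class containing more than one prime divisor), and it is what excludes a ``hidden'' height-$1$ prime meeting every cluster. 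I expect this to be the only real point; everything else is routine.
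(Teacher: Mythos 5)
Your argument is correct, and it is worth noting that the paper itself gives no proof of Proposition \ref{thm:height1primes} beyond the pointer to \cite[Theorem 3.6]{P24}, so you are supplying details the paper outsources. Your mechanism — rewrite each $L_{\bff{y}}$ as the localization $\cal{U}_{T_{\bff{y}}}=\bigcap\{\cal{U}_{\fk{p}}\mid \fk{p}\in\fk{X}(\cal{U}),\ \fk{p}\cap T_{\bff{y}}=\emptyset\}$ (using that the cluster variables of $\bff{y}$ lie in $\cal{A}\subseteq\cal{U}$ and the frozen variables are units), deduce $\cal{U}=\bigcap_{\fk{p}\in S}\cal{U}_{\fk{p}}$ with $S$ the set of height-1 primes avoiding all cluster variables of some cluster, and then show that no essential valuation of a Krull domain can be omitted from this intersection — is the standard strategy for statements of this type and is in the spirit of the cited source. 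Two remarks. First, your irredundancy step leans on the clause of Theorem \ref{thm:classgroup1} asserting that every class contains $|R|$ height-1 primes; this is legitimate (that theorem precedes the proposition and its proof is independent of it), but it silently inherits the side hypotheses of Theorem \ref{thm:classgroupgeneralcase}. You can make the step unconditional, and slightly shorter, by invoking the approximation theorem for Krull domains instead: given $\fk{p}_0$, there is $a\in\bff{q}(\cal{U})^\bullet$ with $v_{\fk{p}_0}(a)=-1$ and $v_{\fk{p}}(a)\ge 0$ for all $\fk{p}\ne\fk{p}_0$, which lies in $\bigcap_{\fk{p}\ne\fk{p}_0}\cal{U}_{\fk{p}}\setminus\cal{U}$ and yields the same contradiction with no class-group input. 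Second, the parenthetical alternative for the finite-intersection case (that a locally finite family of DVR overrings intersecting to $\cal{U}$ must contain every essential valuation ring) is true but would need a reference (e.g.\ \cite{FOSSUM}); as written, your main line of argument does not depend on it, so nothing is lost.
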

	\begin{proof}
		For the proof, one can repeat the same argument in \cite[Theorem 3.6]{P24}.
	\end{proof}
	\begin{corollary}\label{cor:boh}
		Let $(\bff{x},\rho, B)$ be a generalized seed and suppose that $\cal{U}(\bff{x},\rho, B)=S_\bff{x}$. Let $i\in [1,n]$, and consider the exchange polynomial $f_i=x_ix_i'$. Let $r_1,\ldots,r_t\in R[\bff{x}]$ be the distinct and non-associated irreducible polynomials that divide $f_i$. Then \[\{r_jL_{\bff{x}_i}\cap\cal{U}\mid j\in[1,t]\}\] is the set of all the height-1 prime ideals of $\cal{U}$ that contain $x_i$.
	\end{corollary}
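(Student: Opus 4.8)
The plan is to combine Proposition~\ref{thm:height1primes} with unique factorization in the Laurent ring $L_{\bff{x}_i}$ and with the exchange relation $x_ix_i'=f_i$. First I would record some preliminary facts. Since $\cal{U}=S_\bff{x}=\bigcap_{k=0}^{n}L_{\bff{x}_k}$ is a \emph{finite} intersection of Laurent polynomial rings over $R$, hence over a UFD, it is a Krull domain, so Proposition~\ref{thm:height1primes} applies; moreover $R[\bff{x}]\subseteq\cal{A}\subseteq\cal{U}$ and $x_i'\in\cal{A}\subseteq\cal{U}$, so $r_1,\dots,r_t\in\cal{U}$ and $x_ix_i'=f_i$ holds in $\cal{U}$. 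Because the diagonal of $B$ vanishes, $f_i$ has degree $0$ in $x_i$, and one checks from the defining formula that $f_i$ is divisible by none of $x_1,\dots,x_{n+m}$; hence each $r_j$ is still a prime element of $L_{\bff{x}_i}$ and $r_1,\dots,r_t$ are pairwise non-associated there. Since $x_i=f_i/x_i'$ and $x_i'\in L_{\bff{x}_i}^\times$, we obtain $x_i=\epsilon\,r_1^{e_1}\cdots r_t^{e_t}$ in $L_{\bff{x}_i}$ with $\epsilon$ a unit and every $e_j\ge1$; as $L_{\bff{x}_i}$ is a UFD, this identifies $r_1L_{\bff{x}_i},\dots,r_tL_{\bff{x}_i}$ as precisely the height-$1$ primes of $L_{\bff{x}_i}$ that contain $x_i$. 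Finally, $x_i$ is a unit of $L_{\bff{x}_k}$ for every $k\ne i$.

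Next I would prove that every height-$1$ prime of $\cal{U}$ containing $x_i$ is one of the ideals $\fk{P}_j:=r_jL_{\bff{x}_i}\cap\cal{U}$. Let $\fk{p}\in\fk{X}(\cal{U})$ with $x_i\in\fk{p}$. By Proposition~\ref{thm:height1primes} there is $k\in[0,n]$ with $\fk{p}L_{\bff{x}_k}\in\fk{X}(L_{\bff{x}_k})$; since $x_i\in\fk{p}L_{\bff{x}_k}$ but $x_i$ is a unit of $L_{\bff{x}_k}$ for every $k\ne i$, we must have $k=i$. Then $\fk{p}L_{\bff{x}_i}$ is a height-$1$ prime of the UFD $L_{\bff{x}_i}$ containing $x_i$, hence equals $r_jL_{\bff{x}_i}$ for some $j$, so $\fk{p}\subseteq r_jL_{\bff{x}_i}\cap\cal{U}=\fk{P}_j$; once we know $\fk{P}_j$ has height $1$ this forces $\fk{p}=\fk{P}_j$.

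The core of the argument is thus to show that each $\fk{P}_j$ is a height-$1$ prime of $\cal{U}$ containing $x_i$. Write $\cal{U}=\bigcap_{k=0}^{n}\bigcap_{\fk{q}\in\fk{X}(L_{\bff{x}_k})}(L_{\bff{x}_k})_{\fk{q}}$ as a finite-character intersection of discrete valuation rings inside $\cal{F}$. By the standard theory of such representations (see \cite{FOSSUM}), it suffices to prove that the DVR $V:=(L_{\bff{x}_i})_{r_jL_{\bff{x}_i}}$ is not redundant, i.e. that some $u\in\cal{F}$ lies in every ring of the family except $V$; then $V=\cal{U}_{\fk{P}_j}$, and $\fk{P}_j=\fk{m}_V\cap\cal{U}$ has height $1$. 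I claim $u:=x_i'/r_j$ works. Indeed $v_{r_j}(u)=-1$, so $u\notin V$; for $\fk{q}\in\fk{X}(L_{\bff{x}_i})$ with $\fk{q}\ne r_jL_{\bff{x}_i}$ both $x_i'$ and $r_j$ are units of $(L_{\bff{x}_i})_{\fk{q}}$, so $u\in(L_{\bff{x}_i})_{\fk{q}}$; and for $k\ne i$ we have $x_i'=f_i/x_i\in L_{\bff{x}_k}$ because $x_i\in L_{\bff{x}_k}^\times$, while $r_j\mid f_i=x_ix_i'$ in $R[\bff{x}]\subseteq L_{\bff{x}_k}$, so $r_j\mid x_i'$ in $L_{\bff{x}_k}$ and $u\in L_{\bff{x}_k}$. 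This gives the non-redundancy of $V$, hence $\fk{P}_j$ is a height-$1$ prime; it contains $x_i$ since $r_j\mid x_i$ in $L_{\bff{x}_i}$, and the $\fk{P}_j$ are pairwise distinct because $r_j\in\fk{P}_j$ while $r_j\notin\fk{P}_{j'}$ for $j'\ne j$ (the $r_j$ being non-associated primes of $L_{\bff{x}_i}$). Together with the previous paragraph this proves the statement.

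I expect the non-redundancy step — verifying that $u=x_i'/r_j$ is regular at every essential valuation of $\cal{U}$ other than the $r_j$-adic one — to be the only delicate part; the rest is a direct application of Proposition~\ref{thm:height1primes} and of unique factorization in $L_{\bff{x}_i}$.
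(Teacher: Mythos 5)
Your argument is correct, and the framing (two inclusions, primality of the $r_j$ in $L_{\bff{x}_i}$, Proposition~\ref{thm:height1primes} plus the fact that $x_i$ is a unit in $L_{\bff{x}_k}$ for $k\ne i$) matches the paper's proof; the second inclusion is essentially identical, except that the paper extracts $r_j\in\fk{p}L_{\bff{x}_i}$ from $f_i=x_ix_i'\in\fk{p}L_{\bff{x}_i}$ rather than from the factorization of $x_i$ itself. Where you genuinely diverge is the key step that each $\fk{P}_j=r_jL_{\bff{x}_i}\cap\cal{U}$ has height $1$: you prove it by exhibiting the witness $u=x_i'/r_j$ and invoking the theorem that an irredundant member of a finite-character family of DVRs is an essential valuation of the intersection. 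The verification of the witness is sound (in particular $r_j\mid x_i'$ in $L_{\bff{x}_k}$ for $k\ne i$ because $x_i$ is a unit there, and $v_{r_j}(x_i')=0$ because $x_i'\in L_{\bff{x}_i}^\times$). The paper instead observes that $L_{\bff{x}_i}=\cal{U}[\bff{x}_i^{-1}]$ is a localization of $\cal{U}$, so the contraction of a height-$1$ prime of $L_{\bff{x}_i}$ not meeting the multiplicative set is automatically a height-$1$ prime of $\cal{U}$ --- a one-line argument using only the standard behaviour of $\fk{X}(\cdot)$ under localization of Krull domains. Your route costs more (the irredundance theorem from \cite{FOSSUM} and an explicit computation) but buys a little generality: it works directly from the presentation of $\cal{U}$ as an intersection of Laurent rings without needing to identify $L_{\bff{x}_i}$ as a localization of $\cal{U}$, and it pinpoints the essential valuation $\cal{U}_{\fk{P}_j}=(L_{\bff{x}_i})_{r_jL_{\bff{x}_i}}$ explicitly. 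Either way the conclusion and all intermediate claims check out.
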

	\begin{proof}
		Let $r_j$ be an irreducible element of $f_i$ in $R[\bff{x}]$. Since no monomial in $\bff{x}$ divides the exchange polynomials, $r_j$ is also irreducible, and hence prime, in $L_{\bff{x}_i}$. Denote by $\fk{p}'=r_jL_{\bff{x}_i}$ the height-1 prime ideal of $L_{\bff{x}_i}$ generated by $r_j$. Since $L_{\bff{x}_i}=\cal{U}[\bff{x}_i^{-1}]$, we have that $\fk{p}'\cap \cal{U}$ is a height-1 prime ideal of $\cal{U}$ that contains $x_i$.
		
		Vice versa, let $\fk{p}$ be a height-1 prime ideal of $\cal{U}$ that contains the cluster variable $x_i$. By Proposition \ref{thm:height1primes}, the ideal $\fk{p}L_{\bff{x}_k}$ is a height-1 prime ideal of $L_{\bff{x}_k}$ for some $k\in [1,n]$. Notice that $x_i$ is a unit in all the Laurent polynomials rings $L_{\bff{x}_j}$ with $j\ne i$, so the only possibility is that $\fk{p}L_{\bff{x}_i}$ is a height-1 prime ideal of $L_{\bff{x}_i}.$ In particular, $f_i=x_ix_i'\in \fk{p}L_{\bff{x}_i}$, hence there exists an irreducible factor $r_j$ of $f_i$ such that $r_j\in \fk{p}L_{\bff{x}_i}$. The element $r_j$ is irreducible also in $L_{\bff{x}_i}$, that is factorial, therefore $r_jL_{\bff{x}_i}=\fk{p}L_{\bff{x}_i}$, and $\fk{p}=r_jL_{\bff{x}_i}\cap \cal{U}$. 
		
	\end{proof}
	
	This theorem allows us to get more information about height-1 prime ideals that contain a cluster variable and hence more information on the class group. In the next example we show how to compute the class group of a generalized cluster algebra. 
	
	\begin{example}\label{ex:Z2}
		Consider the seed $(\bff{x},\rho, B)$ given in Example \ref{ex:squarefactor}. The directed graph $\Gamma(\bff{x},\rho, B)$ is \[\begin{tikzcd}
		1\ar[r] & 2
		\end{tikzcd}\] and it is acyclic. In addition, $\mathrm{det}(B)=2$, so $B$ has full rank and the seed is coprime. Theorem \ref{thm:acycliccoprime} then implies that \[\cal{A}(\bff{x},\rho, B)=\cal{U}(\bff{x},\rho, B)=S_\bff{x}.\]
		
		In particular, Corollary \ref{cor:boh} implies that $\fk{p}_1=(x_2+1)L_{\bff{x}_1}\cap \cal{U}$ is the only height-1 prime ideal that contains $x_1$ and that $\fk{p}_2=(x_1+1)L_{\bff{x}_2}\cap \cal{U}$ is the only height-1 prime ideal that contains $x_2$.
		Then by Corollary \ref{thm:classgroup1} we have $$\cal{C}(\cal{U})\cong \ZZ^2/\langle\bff{a}_1,\bff{a}_2\rangle,$$ where \[x_1\cal{U}=\fk{p}_1^{a_{11}}\cdot_v\fk{p}_2^{a_{12}},\quad \text{and} \quad x_2\cal{U}=\fk{p}_1^{a_{21}}\cdot_v\fk{p}_2^{a_{22}}.\] Hence, $$\cal{C}(\cal{U})\cong \ZZ^2/\langle(1,0),(0,2)\rangle\cong  \ZZ/2\ZZ.$$ 
	\end{example}
	\begin{theorem}
		Let $(\bff{x},\rho,B)$ be a generalized seed, and $\cal{U}=\cal{U}(\bff{x},\rho,B)$ the generalized upper cluster algebra associated to it. Assume that $\cal{U}=S_\bff{y}$ for some cluster $\bff{y}$. Then $\cal{U}$ is factorial if and only if the exchange polynomials $f_i=y_iy_i'\in R[\bff{y}]$ are irreducible.
	\end{theorem}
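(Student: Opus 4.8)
The plan is to treat the two implications separately; the forward one is immediate, and the converse carries the content. If $\cal{U}$ is factorial, then Proposition \ref{prop:unit}(5)(ii) tells us that every exchange polynomial is irreducible, in particular each $f_i=y_iy_i'$ for $i\in[1,n]$; this is the ``only if'' direction.

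For the ``if'' direction I would route through Corollary \ref{cor:boh} and the class-group formula. First I would record that $\cal{U}=S_\bff{y}=\bigcap_{k=0}^n L_{\bff{y}_k}$ is a finite intersection of Laurent polynomial rings inside the common quotient field, hence a Krull domain, so Corollary \ref{cor:boh} and Theorem \ref{thm:classgroup1} apply with $\bff{y}$ taken as the reference seed. Assuming every $f_i=y_iy_i'$ is irreducible, $f_i$ has (up to associates) only itself as an irreducible factor, so Corollary \ref{cor:boh} gives that $\fk{p}_i:=f_iL_{\bff{y}_i}\cap\cal{U}$ is the \emph{unique} height-1 prime of $\cal{U}$ containing $y_i$. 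I would then check that $\fk{p}_1,\dots,\fk{p}_n$ are pairwise distinct: $y_i\in\fk{p}_i$, whereas for $j\ne i$ the element $y_i$ is a unit of $L_{\bff{y}_j}$ while $f_j$ is a non-unit there, so $y_i\notin f_jL_{\bff{y}_j}\supseteq\fk{p}_jL_{\bff{y}_j}$ and hence $y_i\notin\fk{p}_j$. Thus the list $\fk{p}_1,\dots,\fk{p}_r$ of Theorem \ref{thm:classgroup1} is precisely $\fk{p}_1,\dots,\fk{p}_n$, so $r=n$.

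Next I would read off the vectors $\bff{a}_i\in\NN_0^n$. Since every $y_j$ with $j\ne i$ is a unit of $L_{\bff{y}_i}$, one has $\cal{U}_{\fk{p}_i}=(L_{\bff{y}_i})_{f_iL_{\bff{y}_i}}$, so $v_{\fk{p}_i}$ is the $f_i$-adic valuation of $L_{\bff{y}_i}$; there $y_i=f_i(y_i')^{-1}$ with $y_i'$ a unit, giving $v_{\fk{p}_i}(y_i)=1$, whereas $v_{\fk{p}_j}(y_i)=0$ for $j\ne i$ because $y_i$ is a unit of $L_{\bff{y}_j}$. Therefore $\bff{a}_i$ is the $i$-th standard basis vector of $\ZZ^n$, the subgroup $\langle\bff{a}_1,\dots,\bff{a}_n\rangle$ equals all of $\ZZ^n$, and Theorem \ref{thm:classgroup1} yields $\cal{C}(\cal{U})\cong\ZZ^n/\ZZ^n=0$; a Krull domain with trivial class group is factorial, which finishes the proof.

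The point I expect to require the most care is checking that Corollary \ref{cor:boh} is legitimately in force here: one needs that an exchange polynomial $f_i$ never involves $y_i$ (because $b_{ii}=0$) and is divisible by no nontrivial monomial (the $j=0$ and $j=d_i$ monomials have disjoint support), so that irreducibility of $f_i$ in $R[\bff{y}]$ genuinely passes to primeness of $f_i$, and hence of $y_i$, in $L_{\bff{y}_i}$ --- but this is precisely the observation already used inside the proof of Corollary \ref{cor:boh}. If one prefers to avoid Krull theory altogether, the same conclusion is reached directly: once $y_i$ is prime in $L_{\bff{y}_i}$, whenever $y_i\mid ab$ in $\cal{U}$ with, say, $y_i\mid a$ in $L_{\bff{y}_i}$, the element $ay_i^{-1}$ also lies in every $L_{\bff{y}_k}$ with $k\ne i$ (where $y_i$ is a unit), hence in $\bigcap_k L_{\bff{y}_k}=\cal{U}$; so $y_1,\dots,y_n$ are all prime in $\cal{U}$, and Proposition \ref{prop:unit}(4) gives factoriality.
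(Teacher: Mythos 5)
Your proof is correct and follows essentially the same route as the paper: Proposition \ref{prop:unit}(5) for the forward direction, and Corollary \ref{cor:boh} together with Theorem \ref{thm:classgroup1} to show $r=n$, $\bff{a}_i=\bff{e}_i$, and hence $\cal{C}(\cal{U})=0$ for the converse. You supply some details the paper leaves implicit (that $S_\bff{y}$ is Krull as a finite intersection of Laurent rings, the distinctness of the $\fk{p}_i$, and the valuation computation $v_{\fk{p}_i}(y_i)=1$), and your closing remark giving a direct primeness argument via Proposition \ref{prop:unit}(4) is a valid alternative, but the main argument is the paper's.
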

	
	\begin{proof}
		By Proposition \ref{prop:unit} we only need to prove that if the exchange polynomials $f_1,\ldots,f_n$ are irreducible, then $\cal{U}$ is factorial. By Theorem \ref{thm:classgroup1}, the class group $\cal{C}(\cal{U})$ of $\cal{U}$ is $\ZZ^r/\langle \bff{a}_1,\ldots,\bff{a}_n\rangle$, where  $\fk{p}_1,\ldots,\fk{p}_r$ are the height-1 prime ideals that contain one of the variables $y_1,\ldots,y_n$ and $\bff{a}_i=(a_{ij})_{j=1}^r$ is such that $$y_i\cal{U}=\fk{p}_1^{a_{i1}}\cdot_v\cdots \cdot_v \fk{p}_r^{a_{ir}}.$$ Corollary \ref{cor:boh} implies that $ \{f_iL_{\bff{x}_i}\cap \cal{A}\mid i\in [1,n]\}$ are the only height-1 prime ideals that contain $x_1,\ldots, x_n$, hence $r=n$ and $\fk{p}_i=f_iL_{\bff{x}_i}\cap \cal{U}$ for every $i\in [1,n]$, whence $a_{ij}=0$ for every $j\ne i$ and $a_{ii}=1$. Therefore $\langle \bff{a}_1,\ldots,\bff{a}_n\rangle\cong \ZZ^n$ and $\cal{C}(\cal{U})=0$, so $\cal{U}$ is factorial.
	\end{proof}
	\subsection{Every finitely generated abelian group is the class group of a generalized cluster algebra}
	\begin{center}
		\emph{Throughout all this section, the ground ring $R$ is an algebraically closed field.}
	\end{center}
	In this section, we show that every finitely generated abelian group can be realized as the class group of a generalized cluster algebra. We proceed showing some examples first.
    \begin{examples}
        \begin{enumerate}
            \item Let $G=\ZZ^n,$ with $n\ge 1.$ We want to construct a cluster algebra whose class group is isomorphic to $G$. Let $$B=\begin{pmatrix}
                0 & -1 \\ n+1 & 0
            \end{pmatrix},$$ and $\bff{x}=\{x_1,x_2\}.$ Let $\cal{A}$ be the cluster algebra associated to the acyclic coprime seed $(\bff{x},B)$ over an algebraically field $k$. Then $$f_1=x_2^{n+1}+1 \quad \text{ and } \quad f_2=x_1+1$$  are the exchange polynomials associated to $(\bff{x},B)$, and hence, by Corollary \ref{cor:boh}, the ideal $\fk{q}=f_2L_{\bff{x}_2}\cap \cal{A}$ is the only height-1 prime ideal that contains $x_2$, and $\fk{p}_j=r_jL_{\bff{x}_1}\cap \cal{A}$, where $r_1,\ldots,r_{n+1}$ are the irreducible factors of $f_1$ in $k[\bff{x}]$, are the unique height-1 prime ideals that contain $x_1$. Therefore, Theorem \ref{thm:classgroupclassical} implies that the class group of $\cal{A}$ is isomorphic to $G$.
            \item Let $G=\ZZ/n\ZZ.$  Let $$B=\begin{pmatrix}
                0 & -1 \\ n & 0
            \end{pmatrix},$$ $\bff{x}=\{x_1,x_2\},$ and $\rho_{1,r}=\binom{n}{r}$, $r\in [0,n]$, and $\rho_2=\{1,1\}$. Let $\cal{A}$ be the generalized cluster algebra associated to the acyclic coprime seed $(\bff{x},\rho,B)$ over any factorial domain $R$. Then $$f_1=\sum_{r=0}^n\binom{n}{r}x_2^r=(x_2+1)^n  \quad \text{ and }  \quad f_2=x_1+1$$ are the exchange polynomials associated to $(\bff{x},\rho,B)$. Again then, $\fk{q}=f_2L_{\bff{x}_2}\cap \cal{A}$ is the only height-1 prime ideal that contains $x_2$, and $\fk{p}=(x_2+1)L_{\bff{x}_2}\cap \cal{A}$ is the only height-1 prime ideal that contains $x_1$. Moreover, we have that \[x_1\cal{A}=\fk{p}^n \cdot_v \fk{q}^0, \quad \text{and} 
 \quad x_2\cal{A}=\fk{p}^0 \cdot_v \fk{q}^1 \]Therefore, Theorem \ref{thm:classgroup1} implies that the class group of $\cal{A}$ is isomorphic to $\ZZ^{2}/\langle (0,n),(1,0)\rangle$, that is isomorphic to  $\ZZ/n\ZZ.$
        \end{enumerate}
    \end{examples}
	\begin{theorem}\label{thm:realization}
		Let $G$ be a finitely generated abelian group. Then there exists an acyclic and coprime generalized cluster algebra $\cal{A}$ such that $\cal{A}$ is a Krull domain, its class group $\cal{C}(\cal{A})$ is isomorphic to $G$ and each class of $\cal{C}(\cal{A})$ contains exactly $|R|$ prime divisors.
	\end{theorem}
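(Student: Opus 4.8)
The plan is to combine the structural description of the class group available in the acyclic--coprime setting with an explicit, hands-on construction of a seed whose exchange polynomials have prescribed factorization types. Fix, via the structure theorem, a decomposition $G\cong\ZZ^{s}\oplus\ZZ/n_{1}\ZZ\oplus\cdots\oplus\ZZ/n_{k}\ZZ$ with every $n_{j}\ge 2$. The key observation is that whenever $(\bff{x},\rho,B)$ is acyclic and coprime, Theorem \ref{thm:acycliccoprime} gives $\cal{A}=\cal{U}=S_{\bff{x}}$, so $\cal{A}$ is a Krull domain as soon as $B$ has full rank, and Corollary \ref{cor:boh} says that the height-$1$ primes of $\cal{A}$ containing the cluster variable $x_{i}$ are exactly the ideals $r_{i,l}L_{\bff{x}_{i}}\cap\cal{A}$, where $r_{i,1},\dots,r_{i,t_{i}}$ are the distinct non-associated irreducible factors of $f_{i}=x_{i}x_{i}'$ in $R[\bff{x}]$. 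For $j\ne i$ the variable $x_{i}$ is a unit of $L_{\bff{x}_{j}}$, so these lists are pairwise disjoint; and since $x_{i}=f_{i}/x_{i}'$ with $x_{i}'$ a unit of $L_{\bff{x}_{i}}$, the valuation of $x_{i}$ at $r_{i,l}L_{\bff{x}_{i}}\cap\cal{A}$ is the multiplicity $m_{i,l}$ of $r_{i,l}$ in $f_{i}$. Feeding this into Theorem \ref{thm:classgroup1} produces the block-diagonal presentation
\[ \cal{C}(\cal{A})\;\cong\;\bigoplus_{i=1}^{n}\ZZ^{t_{i}}\big/\big\langle(m_{i,1},\dots,m_{i,t_{i}})\big\rangle\;\cong\;\bigoplus_{i=1}^{n}\Bigl(\ZZ^{\,t_{i}-1}\oplus\ZZ/\!\gcd(m_{i,1},\dots,m_{i,t_{i}})\ZZ\Bigr). \]
So it suffices to build an acyclic, coprime, full rank generalized seed with $n\ge 2$ in which $s$ of the exchange polynomials split into two distinct irreducible factors (contributing $\ZZ^{2}/\langle(1,1)\rangle\cong\ZZ$ each), $k$ of them are a single irreducible raised to the power $n_{j}$ (contributing $\ZZ/n_{j}\ZZ$), and the remaining ones are irreducible (contributing $0$), the number of those last directions chosen so that $n=\max(2,s+k)$.

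For the construction, take $m=n$ frozen variables and let $B$ be the $(n+m)\times n$ matrix whose first $n$ rows vanish and whose last $n$ rows form $\mathrm{diag}(d_{1},\dots,d_{n})$, where $d_{i}=2$ on the $\ZZ$-directions, $d_{i}=n_{j}$ on the $\ZZ/n_{j}\ZZ$-directions, and $d_{i}=1$ on the extra directions; set $\bff{d}=(d_{1},\dots,d_{n})$, so that $d_{i}\mid b_{ji}$ for all $j$. The upper $n\times n$ block is the zero matrix, hence skew-symmetrizable; the only edges of $\Gamma(\bff{x},\rho,B)$ are the arrows $n+i\to i$, so the seed is acyclic; and $B$ has rank $n$, hence is full rank, so by Proposition \ref{prop:fullrank} every seed mutation-equivalent to it is coprime and $\cal{U}=S_{\bff{y}}$ for every cluster $\bff{y}$. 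Since $\beta_{ki}=0$ for $k\le n$ and $\beta_{n+i',i}$ equals $1$ if $i'=i$ and $0$ otherwise, the exchange polynomial collapses to $f_{i}=\sum_{j=0}^{d_{i}}\rho_{i,j}\,x_{n+i}^{\,j}\in R[x_{n+i}]$. Because $R$ is an algebraically closed field of characteristic $0$, we may choose the strings freely subject only to $\rho_{i,0}=\rho_{i,d_{i}}=1$: on a $\ZZ$-direction put $\rho_{i}=\{1,3,1\}$, so $f_{i}=x_{n+i}^{2}+3x_{n+i}+1$, which is separable and therefore splits into two distinct linear factors over $R$; on a $\ZZ/n_{j}\ZZ$-direction put $\rho_{i,l}=\binom{n_{j}}{l}$ (all nonzero, as $\mathrm{char}\,R=0$, and $\rho_{i,0}=\rho_{i,n_{j}}=1$), so $f_{i}=(x_{n+i}+1)^{n_{j}}$; on an extra direction put $\rho_{i}=\{1,1\}$, so $f_{i}=x_{n+i}+1$ is irreducible. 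Distinct $f_{i}$ are non-constant polynomials in distinct single variables $x_{n+i}$, hence pairwise coprime in $R[\bff{x}]$, so $(\bff{x},\rho,B)$ is an acyclic coprime generalized seed.

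It remains to read off the invariants of $\cal{A}=\cal{A}(\bff{x},\rho,B)$. By Theorem \ref{thm:acycliccoprime}, $\cal{A}=\cal{U}=S_{\bff{x}}$, and since $B$ has full rank this is a Krull domain. Every $f_{i}$ has constant term $1$, so no monomial divides it and Corollary \ref{cor:boh} applies; the multiplicity vectors computed above are $(1,1)$ on the $\ZZ$-directions, $(n_{j})$ on the $\ZZ/n_{j}\ZZ$-directions, and $(1)$ on the extra ones, so the displayed presentation gives $\cal{C}(\cal{A})\cong\ZZ^{s}\oplus\bigoplus_{j=1}^{k}\ZZ/n_{j}\ZZ\cong G$, generated by the classes of the listed height-$1$ primes. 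Finally $n\ge 2$ and $R$ is an infinite field, so $D=R[x_{n+1}^{\pm1},\dots,x_{n+m}^{\pm1}]$ is infinite of cardinality $|R|$, and the last assertion of Theorem \ref{thm:classgroup1} gives that every class of $\cal{C}(\cal{A})$ contains exactly $|R|$ height-$1$ prime ideals. The genuinely delicate points are the structural splitting of $\cal{C}(\cal{A})$ into a direct sum over mutation directions — which hinges on the fact, extracted from Corollary \ref{cor:boh}, that in the acyclic coprime case each $x_{i}$ meets precisely the primes cut out by the irreducible factors of its own exchange polynomial and with their multiplicities — and the observation that the rigid normalization $\rho_{i,0}=\rho_{i,d_{i}}=1$, together with the requirement that the $\rho_{i,j}$ be monomials, still leaves enough room to prescribe the factorization type of each $f_{i}$; this is exactly where algebraic closedness and characteristic $0$ are used. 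Skew-symmetrizability, acyclicity, coprimality, full rank, and the divisibility $d_{i}\mid b_{ji}$ are all immediate from the block shape of $B$.
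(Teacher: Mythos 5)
Your proof is correct, and it uses the same basic machinery as the paper (Theorem \ref{thm:acycliccoprime} to identify $\cal{A}=\cal{U}=S_{\bff{x}}$, Corollary \ref{cor:boh} to locate the height-$1$ primes over each cluster variable, Theorem \ref{thm:classgroup1} for the presentation of the class group, binomial strings to force $(x+1)^{n_j}$ for torsion, and algebraic closedness to split a polynomial for the free part), but the seed you construct is genuinely different from the paper's. The paper works with square exchange matrices and no frozen variables, coupling the mutable variables in pairs via an anti-diagonal matrix, and then splits into three cases (pure torsion, pure free via a single polynomial $x_1^m+1$ with $m$ distinct roots, and mixed), in each case exhibiting an explicit surjection $\phi\colon\ZZ^r\to G$ and computing its kernel. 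You instead attach one private frozen variable to each mutable direction, so that every exchange polynomial is a univariate polynomial in its own frozen variable; this decouples the directions completely, makes coprimality and acyclicity trivial, collapses the three cases into one uniform construction, and turns the class group into a visible direct sum $\bigoplus_i\ZZ^{t_i}/\langle(m_{i,1},\dots,m_{i,t_i})\rangle$ read off from the factorization type of each $f_i$ (with each $\ZZ$ summand coming from a separable quadratic rather than from $x_1^m+1$). What your route buys is a shorter, case-free verification with no kernel computation; what the paper's route buys is examples with no frozen variables and genuinely interacting mutable variables, which is arguably a stronger realization statement since the algebras are generated by cluster variables alone. The only points worth making explicit in a write-up are the ones you already flag: the admissibility of constant strings such as $\{1,3,1\}$ (the paper's own examples use constant strings, so this is consistent with the definition), the fact that each $f_i$ has constant term $\rho_{i,0}=1$ so Corollary \ref{cor:boh} applies, and the disjointness of the prime lists across directions, which follows because a height-$1$ prime containing both $x_i$ and $x_j$ with $i\ne j$ could not extend to a height-$1$ prime of any $L_{\bff{x}_k}$.
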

	\begin{proof} 	
		Let $m,k,n_1,\ldots,n_k\in \NN_0$ be such that $G\cong \ZZ^{m-1}\oplus\ZZ/n_1\ZZ\oplus\cdots\oplus\ZZ/n_k\ZZ$. Assume that $m\ge 1$, and that if $k>0$, then $0<n_1<\cdots<n_k$. Set $N=2k+2$, and suppose that $m+k>0$.

		Define a matrix $B=(b_{ij})$ of dimension $N\times N$ as follows:
		\[b_{ij}=\begin{cases}
		m & \text{if}\,\, i=N,j=1,\\
		n_j & \text{if}\, \,\frac{N}{2}< i\le N-1,j=N-i,\\
		-1 & \text{if}\,\, i\le \frac{N}{2},j=N-i+1,\\
		0 & \text{otherwise}.
		\end{cases}\]
		So $B$ is of the following form:     
  
  \vspace{1.5cm}
  
  \[B=\begin{pNiceArray}{c|ccc|cccc}
        0 & \Block{4-3}<\huge>{0}  & & & & & & -1\\
         \Vdots & & & & & & & \\
         & & &  & &\Iddots & &\\
         & & &  &-1 & & &\\
        \cline{2-8}
         & & & n_k& \Block{3-4}<\huge>{0} & & &\\
        & &  \Iddots &  & & & & \\
        & n_1  & & &  & & & \\
        m & 0 & \Cdots & & & & &
        \CodeAfter
 \OverBrace[shorten,yshift=3pt]{1-2}{1-4}{k}
 \OverBrace[shorten,yshift=3pt]{1-5}{1-8}{k+1}
    \end{pNiceArray}\]
		
		$B$ is a full rank skew-symmetrizable matrix. Set $d_i=n_i$, if $2\le i\le k+1$, and $d_i=1$ otherwise, and let $\bff{\rho}=(\rho_i)_{i\in[1,N]}$ be
		\[\rho_{i,r}=\begin{cases}
		{n_i \choose r} & \text{if}\,\, 2\le i\le k+1,\\
		1 & \text{otherwise},
		\end{cases}\qquad r\in[0,d_i].\]
		The seed $(\bff{x},\rho, B)$ is coprime and acyclic, hence Theorem \ref{thm:acycliccoprime} implies that \[A:=\cal{A}(\bff{x},\rho, B)=\cal{U}(\bff{x},\rho, B)=S_\bff{x}.\]
		The exchange polynomials associated to the initial seed are: 
		\[f_i=\begin{cases}
		x_{N}^m+1 & \text{if}\,\,i=1,\\
		\sum_{r=0}^{n_i}{n_i \choose r}x_{N-i+1}^r=(x_{N-i+1}+1)^{n_i} & \text{if}\,\, 2\le i\le k+1,\\
		x_{N-i+1}+1 & \text{if}\,\, i> k+1.
		\end{cases}\]
		Denote by $g_i$ the binomial $x_{N-i+1}+1$ for $i\in[2,N]$.
		Corollary \ref{cor:boh} implies that $\fk{p}_i=g_iL_{\bff{x}_i}\cap A$ is the only height-1 prime ideal of $A$ that contains the variable $x_i$ for every $i\in[2,N]$. Now factor  $f_1$ in irreducible factors, say $f_1=r_1\cdots r_m$, with $r_j\in R[\bff{x}]$ irreducible polynomials. Corollary \ref{cor:boh} implies again that $\{r_jL_{\bff{x}_1}\cap A\}$, $j\in [1,m]$, are the only height-1
		prime ideals of $A$ that contains $x_1$.
		Hence, \[\cal{C}(A)\cong \ZZ^{m+2k+1}/\langle\bff{a}_1,\ldots\bff{a}_{N}\rangle,\]with
		\[\bff{a}_1=(\underbrace{1,1,\ldots,1,}_{m}0,\ldots,0),\]and
		\[\bff{a}_i=\begin{cases}
		(\underbrace{0,\ldots,0,}_{m+i-1}n_i,0,\ldots,0) &\text{if}\,\,2\le i\le k+1,\\
		(\underbrace{0,\ldots,0,}_{m+i-1}1,0,\ldots,0) &\text{if}\,\,i> k+1.
		\end{cases}\]
		Consider now
		the map $\phi\colon\ZZ^{m+2k+1}\to G$ defined as follows: \[(u_1,\ldots,u_{m+2k+1}) \mapsto (u_1-u_2,u_2-u_3,\ldots,u_m-u_{m+1},u_{m+1}-u_1,[u_{m+2}]_{n_1},\ldots,[u_{m+1+k}]_{n_k}, 0,\ldots, 0),\]
		where $[u]_q$ is the $q$-class of $u$ in $\ZZ/q\ZZ$. It is easy to see that $\phi$ is a surjective homomorphism,
		and that  \begin{equation*}
		\begin{split}
		\ker\phi&=\{(u_{1},\ldots,u_{m+2k+1})\mid u_{m+1+i}\equiv 0 \Mod {n_i}, i\in[1,k], u_j=u_{j+1}, j\in [1,m+1]\}\\
		&=\{(\underbrace{\alpha,\ldots,\alpha,}_{m+1}n_1\beta_1,\ldots,n_k\beta_k,\gamma_1,\dots,\gamma_k)\mid \alpha,\beta_i,\gamma_i\in \ZZ\}\\
		&\cong \langle \bff{a}_1,\ldots,\bff{a}_{N}\rangle.
		\end{split}
		\end{equation*}
		Therefore $\cal{C}(A)\cong G$.
		
		Notice that each generalized cluster algebra that we constructed in the proof is acyclic and coprime, thus it is a Krull domain. The statement that each class of $\cal{C}(A)$ contains exactly $|R|$ prime divisors follows directly from Theorem \ref{thm:classgroup1}.
	\end{proof}
	
	\begin{example}We present an example to further illustrate the construction described in Theorem \ref{thm:realization}.
    
		Let $G=\ZZ\times \ZZ/3\ZZ$. Hence using the notation of Theorem \ref{thm:realization}, we have that $m=2, k=1, n_1=3,$ and in particular $N=4.$ So let $\bff{x}=\{x_1,x_2,x_3,x_4\}$ and construct the exchange matrix as follows:
        \[B=\begin{pNiceMatrix}
		0 & 0 & 0 & -1 \\
		0 & 0 & -1 & 0 \\
		0 & 3 & 0 & 0 \\
		2 & 0 & 0 & 0 \\
		\end{pNiceMatrix}.\] Set		
		 $\rho_1=\rho_3=\rho_4=\{1,1\}$, and $\rho_2,r=\binom{3}{r}$, so in particular $\rho_2=\{1,3,3,1\}.$ Then the exchange polymomials associated to $(\bff{x},\rho, B)$ are $$f_1=x_4^2+1, \quad f_2=x_3^3+3x_3^2+3x_3+1, \quad f_3=x_2+1, \quad f_4=x_1+1.$$ Let $\cal{A}=\cal{A}(\bff{x},\rho, B)$ be the cluster algebra over $\mathbb{C}$ associated to $(\bff{x},\rho, B)$. Then $\fk{p}_j=f_jL_{\bff{x}_j}\cap \cal{A}$, $j\in \{3,4\}$ are the only height-1 primes that contain $x_j$, $\fk{q}_1=(x_4+i)L_{\bff{x}_1}\cap \cal{A},$ and $\fk{q}_2=(x_4-i)L_{\bff{x}_1}\cap \cal{A}$ are the only height-1 primes that contain $x_1$, and $\fk{h}=(x_3+1)L_{\bff{x}_2}\cap \cal{A}$ is the only height-1 prime that contains $x_2$. Moreover, we have that \[x_1\cal{A}=\fk{q}_1\cdot_v \fk{q}_2, \quad x_2\cal{A}=\fk{h}^3, \quad x_3\cal{A}=\fk{p}_3, \quad x_4\cal{A}=\fk{p}_4,\] therefore the number of height-1 prime ideals that contain one cluster variable among $x_1,\ldots,x_4$ is $r=5$. Set \[\bff{a}_1=(1,1,0,0,0),\quad \bff{a}_2=(0,0,3,0,0),\quad \bff{a}_3=(0,0,0,1,0), \quad \bff{a}_4=(0,0,0,0,1).\] Then the class group $\cal{C}(\cal{A})$ is isomorphic by Theorem \ref{thm:classgroup1} to $\ZZ^r/\langle \bff{a}_1,\ldots,\bff{a}_4 \rangle$ that is isomorphic to $G$ via the map \[\phi\colon \ZZ^5 \to G, \quad (u_1,\ldots,u_5)\mapsto (u_1-u_2,u_2-u_1,[u_3]_3,0,0).\]
	\end{example}

	\section{LP algebras}\label{section:LP}
	This section is dedicated to Laurent phenomenon algebras (or LP algebras). They were introduced by Lam and Pylyavskyy in \cite{LP16} in order to get a general definition of those algebras for which the Laurent phenomenon holds. The aim of this section is to outline, step by step, that their behaviour is quite similar to the one of cluster algebras. A cluster algebra is an LP algebra under the condition of primitivity (the greatest common divisors of each column of any seed is 1) and coprimality (see Proposition \ref{prop:LP}). We show that the Markov cluster algebra is an LP algebra (see Example \ref{ex:markov}), even if the Markov seed is neither primitive nor coprime. In general an LP algebra and a cluster algebra coming from the same seed do not coincide (see Example \ref{ex:A3}).    
	\begin{definition}[LP seed]
		A \defit{Laurent phenomenon seed} (in short \defit{LP seed}) of rank $n$ in $\cal{F}$ is a pair $(\bff{x},\bff{F})$ such that
		\begin{enumerate}[label=(\roman*)]
			\item $\bff{x}=\{x_1,\ldots, x_n\}$ is a cluster, 
			\item $\bff{F}=\{F_1,\ldots,F_n\}$ is a collection of irreducible elements of $R[\bff{x}]$ such that for each $i,j\in [1,n]$, $x_j\nmid F_i$, and $F_i$ does not depend on $x_i$. The polynomials $F_1,\ldots, F_n$ are called the \defit{exchange polynomials} (associated to $(\bff{x},\bff{F})$). 
		\end{enumerate}
	\end{definition}
    \begin{example}
        Suppose that $R=\ZZ$ or $R=\mathbb{Q}$. Let $(\bff{x},B)$ be a classical seed of rank $n$, and set $d_i=\mathrm{gcd}\{b_{ji}\mid j\in [1,n]\}$. Let $\bff{F}=\{f_1,\ldots,f_n\}$ be the exchange polynomials associated to $(\bff{x},B).$ Then $(\bff{x},\bff{F})$ is a LP seed if and only if $d_i$ is a power of $2$ for every $i\in [1,n]$. If instead $R$ is an algebraically closed field, then $(\bff{x},\bff{F})$ is a LP seed if and only if $d_i=1$ for every $i\in [1,n].$  \cite[Proposition 2.3]{GELS19}
    \end{example}
	\begin{remark}
		By common convention, a polynomial $f\in R[\bff{x}]$ is an \defit{irreducible polynomial} if it is a non-constant polynomial such that cannot be factored into the product of two non-constant polynomials. In contrast, a polynomial $f\in R[\bff{x}]$ is an \defit{irreducible element} of $R[\bff{x}]$ if it is not a unit and it is not a product of two non-units. For instance, the polynomial $2\in \ZZ[x]$ is an irreducible element of $\ZZ[x]$, but it is not an irreducible polynomial of $\ZZ[x]$, and the polynomial $2x-4$ is an irreducible polynomial of $\ZZ[x]$ but it is not an irreducible element of $\ZZ[x]$. In the original definition of an LP algebra, Lam and Pylyavskyy \cite{LP16} assumed that the exchange polynomial $F_i$ is an {\em irreducible element of} $R[\bff{x}]$. This implies that the polynomials $F_i$ cannot be invertible. On the other hand, in the proof of \cite[Lemma 4.6]{DL22} it is implicitly assumed that if the exchange polynomial is constant, then it is invertible.
	\end{remark}
	Let $F,G \in R(x_{1},\ldots,x_{n})$ be two rational functions. Denote by $F|_{x_{i}\leftarrow G}$ the expression obtained by substituting $G$ for $x_{i}$ in $F$. If $x_i$ appears in $F$, we write $x_{i}\in F$. Otherwise, we write $x_{i}\notin F$.
	\begin{definition}[Exchange Laurent polynomials]
		Let $(\bff{x},\bff{F})$ be an LP seed. For each $F_j\in \bff{F}$, define a Laurent polynomial $\hat{F}_j=\frac{F_j}{x_1^{a_1}\cdots x_{j-1}^{a_{j-1}}x_{j+1}^{a_{j+1}}\cdots x_{n}^{a_{n}}}$, where $a_{k}\in \NN_{0}$ is maximal such that $F_{k}^{a_{k}}$ divides $F_{j}|_{x_{k}\leftarrow F_{k}/x}$ in  $R[x_{1},\ldots,x_{k-1},{x}^{-1},x_{k+1},\ldots, x_{n}]$. The Laurent polynomials $\bff{\hat{F}}=\{\hat{F}_{1},\ldots,\hat{F}_{n}\}$ are called the \defit{exchange Laurent polynomials} (associated to $(\bff{x},\bff{F})$).
	\end{definition}
	
    \begin{example}[{\cite[Example 2.5]{LP16}}]
        Let $R=\ZZ$, $\bff{x}=\{x_1,x_2,x_3\},$ and $\bff{F}=\{x_3+1,(x_1+1)^2+x_3^2,x_2^2+x_2+x_1^3+x_1^2\}.$ Then $\hat{F}_1=F_1$ and $\hat{F}_2=F_2$ since $x_1\in F_2,F_3$ and $x_2\in F_1,F_3.$ For the same reason, $x_2$ does not appear in the denominator of $\hat{F}_3$, since $x_3\in F_2$. So it remains to compute the exponent of $x_1$ in the denominator of $\hat{F}_3.$ In order to that, we replace in $F_3$ $x_1$ by $\frac{x_2+1}{x_1'}$, and we obtain \[g=x_2(x_2+1)+\frac{(x_2+1)^2(x_2+1+x_1')}{(x_1')^3}.\] Since $x_2+1$ divides $g$ in $\ZZ[(x_1')^{-1},x_2,x_3]$, but $(x_2+1)^m$ does not divide $g$ for all $m\in \NN_{\ge 2},$ we get that $\hat{F}_3=F_3/x_1.$
    \end{example}
	\begin{definition}[Mutation of LP seed]\label{def:LPmutation}
		Let $(\bff{x},\bff{F})$ be an LP seed, and $k\in [1,n]$. Define a new pair \[\mu_{k}(\bff{x},\bff{F}):=(\{x_{1},\ldots,x_{k}',\ldots,x_{n}\},\{F_{1}',\ldots,F_{k},\ldots,F_{n}'\}),\] where $x_{k}':=\hat{F}_{k}/x_{k}$ and the $F_{i}'s$ are obtained as follows:
		\begin{enumerate}
			\item If $x_{k}\notin F_{i}$, then $F_{i}'=F_{i}$;
			\item If $x_{k}\in F_{i}$, then $F_{i}' $ is obtained by the following steps:
			\begin{enumerate}
				\item\label{def:1} Define $G_{i}:=F_{i}|_{x_{k}\leftarrow N_{k}}$, where $N_{k}=\frac{\hat{F}_{k}|_{x_{i}\leftarrow 0}}{x_{k}'}.$
				\item Define $H_{i}$ to be $G_{i}$ with all common factors between $G_i$ and $\hat{F}_{k}|_{x_{i}\leftarrow 0}$ removed. Notice that $H_i$ is defined only up to units.
				\item Let $M$ be a Laurent monomial in $\{x_{1}',\ldots,x_{n}'\}$ such that $F_{i}'=MH_{i}\in R[\bff{x}']$ and it is not divisible by any variable in $\bff{x}'$. 
			\end{enumerate}
		\end{enumerate}
		Then we say that the new pair $\mu_{k}(\bff{x},\bff{F})$ is obtained from the LP seed $(\bff{x},\bff{F})$ by the \defit{LP mutation} in direction $k$.
	\end{definition}
	One can prove that $\mu_k(\bff{x},\bff{F})$ is also an LP seed, and that $\mu_k$ is an involution. See \cite{DL22} for more details. 
    \begin{examples}\label{ex:A3_0}
        Let $(\bff{x},\bff{F})$ be the LP seed giving by $$\bff{x}=\{x_1,x_2,x_3\},\qquad  \bff{F}=\{x_2+1,\,x_1+x_3,\,x_2+1\}.$$ By definition, we get that \[\hat{F}_1=\frac{F_1}{x_3},\qquad \hat{F}_2=F_2,\qquad \hat{F}_3=\frac{F_3}{x_1}.\] Let us mutate the seed in direction $1$, obtaining a seed $(\bff{x}',\bff{F}')$. Then by definition $\bff{x}'=\left\{\frac{x_2+1}{x_1x_3}, \, x_2,\,x_3\right\}$, and $F'_1=F_1.$ In order to compute $F_2'$ and $F_3'$ observe that $x_1\notin F_3,$ whence $F_3'=\epsilon F_3,$ with $\epsilon \in R^\times.$ On the other hand $x_1\in F_2,$ and so we need to proceed with the three steps above. In particular, $N_1=1/x_1'x_3$ and $G_2=1/x_1'x_3+x_3.$ Thus $H_2=\nu G_2,$ $\nu\in R^\times.$ Finally $F'_2=x_1'x_3^2+1.$
     \end{examples}
    \begin{remark}
        It is important to notice that, by definition, $F_i'$ is unique only up to units. And this is the motivation to consider LP seeds up to an equivalent relation.
    \end{remark} We say that $(\bff{y},\bff{G})$ is \defit{equivalent} to $(\bff{x},\bff{F})$ if for every $i\in[1,n]$ there exist $r_i,r_i'\in R^\times$ such that $x_i=r_iy_i$ and $F_i=r_i'G_i$. Denote by $[(\bff{x},F)]$ the set of LP seeds which are equivalent to $(\bff{x},F)$. 
    \begin{lemma}[{\cite[Lemma 3.1]{LP16}}]
        Let $(\bff{x},F),(\bff{y},G)$ be two LP seeds. If $[(\bff{x},F)]=[(\bff{y},G)],$ then $[\mu_k(\bff{x},F)]=[\mu_k(\bff{y},G)]$ for every $k\in [1,n].$
    \end{lemma}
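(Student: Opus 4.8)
The plan is to show the result by tracking how the equivalence relation interacts with the mutation construction step by step, since mutation of an LP seed is defined only up to units at several stages. First I would unwind what it means for $[(\bff{x},F)]=[(\bff{y},G)]$: there are units $r_i,r_i'\in R^\times$ with $x_i=r_iy_i$ and $F_i=r_i'G_i$ for all $i\in[1,n]$. I then need to produce units $s_i,s_i'\in R^\times$ witnessing $[\mu_k(\bff{x},F)]=[\mu_k(\bff{y},G)]$. The cleanest route is to verify, in order, that (a) the exchange Laurent polynomials satisfy $\hat F_j = (\text{unit})\cdot \hat G_j$, then (b) the new mutated cluster variables $x_k'$ and $y_k'$ differ by a unit, then (c) the new exchange polynomials $F_i'$ and $G_i'$ differ by a unit; combining these gives the equivalence of $\mu_k(\bff{x},F)$ and $\mu_k(\bff{y},G)$.

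For step (a), one inspects the definition of $\hat F_j$: the exponents $a_k$ are determined by a divisibility condition in $R[x_1,\dots,x_{k-1},x^{-1},x_{k+1},\dots,x_n]$, and divisibility is insensitive to multiplying $F_j$ or the $F_k$ by units and to rescaling the variables $x_i\mapsto r_iy_i$ (which merely induces an automorphism of the relevant Laurent polynomial ring). Hence the maximal exponents $a_k$ for the $F$-seed and the $G$-seed agree, so $\hat F_j$ and $\hat G_j$ have the same monomial denominators and differ only by the unit coming from $F_j=r_j'G_j$ and the variable rescalings. For step (b), since $x_k' = \hat F_k / x_k$ and $y_k' = \hat G_k / y_k$, and $\hat F_k = (\text{unit})\,\hat G_k$ with $x_k = r_k y_k$, we immediately get $x_k' = (\text{unit})\, y_k'$, which pins down the unit $s_k$ relating the two new cluster variables; for $i\ne k$ the cluster variable is unchanged so $s_i = r_i$.

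For step (c), I would go through the three sub-steps of Definition \ref{def:LPmutation}(2). The substitution $N_k = (\hat F_k|_{x_i\leftarrow 0})/x_k'$ is, up to a unit, the same as its $G$-analogue by step (a) and step (b); hence $G_i := F_i|_{x_k\leftarrow N_k}$ equals a unit times the $G$-seed's version, because $F_i = r_i'\tilde G_i$ and substitution commutes with the variable rescalings. Removing common factors between $G_i$ and $\hat F_k|_{x_i\leftarrow 0}$ is again a well-defined operation up to units — and this is precisely why the statement is only about equivalence classes — so the $H_i$ for the two seeds agree up to a unit. Finally, the Laurent monomial $M$ making $F_i' = MH_i$ lie in $R[\bff{x}']$ and be divisible by no variable is uniquely determined (its exponents are forced by the requirement of minimal support), so $F_i'$ and $G_i'$ differ only by the accumulated unit, giving the required $s_i'$.

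The main obstacle, I expect, is bookkeeping the units carefully through the ``remove all common factors'' operation in sub-step (b) of the mutation, since common factors are only defined up to units and one must check that the ambiguity there does not obstruct — indeed is exactly accounted for by — passing to equivalence classes. A secondary subtlety is making sure the variable-rescaling automorphism $x_i\mapsto r_iy_i$ genuinely commutes with each operation (substitution, factoring out monomials, the divisibility test defining $\hat F$); this is routine but must be stated cleanly, perhaps by first reducing to the case where all $r_i = 1$ (pure rescaling of the $F_i$ and $G_i$) and then separately to the case where all $r_i' = 1$, handling the two kinds of unit ambiguity one at a time.
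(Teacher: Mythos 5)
Your argument is correct and is essentially the proof of the cited source: the paper itself does not prove this lemma but quotes it from Lam--Pylyavskyy \cite[Lemma 3.1]{LP16}, where the same unit-chasing through the definitions of $\hat F_j$, $x_k'$, $H_i$ and $M$ is carried out. The only step worth making explicit is that the rescaling turns the substitution $x_k\leftarrow F_k/x$ into $x_k\leftarrow c\,F_k/x$ for a unit $c$, which leaves the maximal power of $F_k$ dividing the result unchanged because the terms of the substituted polynomial are separated by their degree in $x^{-1}$ and $c$ only rescales each such term by a unit.
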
 Let $(\bff{x},F)$ be an LP seed. By the above lemma, it is reasonable to define $\mu_k([(\bff{x},F)]):=[\mu_k(\bff{x},F)].$
	We say that $(\bff{y},\bff{G})$ is \defit{mutation-equivalent} to $(\bff{x},\bff{F})$ if there is a sequence of mutations $\mu_{k_m}\circ \cdots \circ \mu_{k_1}$ such that $\mu_{k_m}\circ \cdots\circ \mu_{k_1} ([(\bff{x},\bff{F})])=[(\bff{y},\bff{G})].$ Denote by $\mathcal{M}(\bff{x},\bff{F})$ the mutation equivalence class of $(\bff{x},\bff{F})$ and by $\mathcal{X}=\mathcal{X}(\bff{x},\bff{F})$ the set of all cluster variables appearing in $\mathcal{M}(\bff{x},\bff{F}).$
	
	\begin{definition}
		Let $(\bff{x},\bff{F})$ be an LP seed, and $\cal{X}=\cal{X}(\bff{x},\bff{F})$. 
  \begin{enumerate}
      \item The \defit{Laurent phenomenon algebra} (in short, \defit{LP algebra}) associated to $(\bff{x},\bff{F})$ is the $R$-algebra \[\cal{A}(\bff{x},\bff{F})=R[x\mid x\in \cal{X}].\]
      \item The \defit{upper LP algebra} associated to $(\bff{x},\bff{F})$ is the $R$-algebra  \[\cal{U}(\bff{x},\bff{F})=\bigcap_{(\bff{y},\bff{G})\in \cal{X}}{L_{\bff{y}}}.\] 
  \end{enumerate}The elements $x\in \mathcal{X}$ are called \defit{cluster variables}.
	\end{definition}
	The following theorem shows that LP algebras have the Laurent phenomenon.
	\begin{theorem}[{\cite[Theorem 5.1]{LP16}}]
		Let $(\bff{x},\bff{F})$ be an LP seed. Then \[\cal{A}(\bff{x},\bff{F})\subseteq \cal{U}(\bff{x},\bff{F}).\]
	\end{theorem}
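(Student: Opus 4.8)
The plan is to follow Fomin and Zelevinsky's proof of the Laurent phenomenon, adapted to LP mutation as carried out in \cite{LP16}. First I would reduce to a single initial seed: it suffices to show, for one fixed LP seed $(\bff{x},\bff{F})$, that every cluster variable of $\cal{A}(\bff{x},\bff{F})$ lies in $L_\bff{x}=R[x_1^{\pm1},\ldots,x_n^{\pm1}]$. Indeed, since LP mutation is an involution and mutation-equivalence is symmetric, every seed $(\bff{y},\bff{G})$ mutation-equivalent to $(\bff{x},\bff{F})$ may be taken as the initial one, so proving $\cal{A}(\bff{x},\bff{F})\subseteq L_\bff{y}$ for all such $\bff{y}$ gives $\cal{A}(\bff{x},\bff{F})\subseteq \bigcap_{\bff{y}}L_\bff{y}=\cal{U}(\bff{x},\bff{F})$. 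Thus the goal becomes: for every finite mutation sequence $\mu_{k_m}\circ\cdots\circ\mu_{k_1}$, the cluster variables of $\mu_{k_m}\circ\cdots\circ\mu_{k_1}([(\bff{x},\bff{F})])$ are Laurent polynomials in $x_1,\ldots,x_n$.

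I would establish this by an LP-analogue of the caterpillar lemma, by induction on the length $m$ of the mutation path, arranged as the spine of a tree with the current exchange Laurent polynomials $\bff{\hat{F}}$ recorded at each vertex. When two consecutive mutations are in distinct directions, one attempts to commute them to shorten the path; when they genuinely interact, one invokes a local lemma about three consecutive seeds. The induction only closes if its hypothesis is sharpened to a \emph{localized} form: the cluster variable produced after mutating in direction $k$ and then a different direction $\ell$ should lie not merely in the current Laurent ring but in $R[x_1,\ldots,x_k^{\pm1},\ldots,x_\ell^{\pm1},\ldots,x_n]$, i.e.\ its denominator is supported only on the variables literally exchanged. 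Propagating this strengthened statement along the spine is the heart of the argument.

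The crucial case compares $(\bff{x},\bff{F})\xrightarrow{\mu_i}(\bff{x}',\bff{F}')\xrightarrow{\mu_j}(\bff{x}'',\bff{F}'')$ with $i\ne j$ against the seed obtained by first mutating in direction $j$, and one must verify that the new cluster variable produced by the second mutation, rewritten in the initial cluster $\bff{x}$, has denominator a monomial in $x_i$ and $x_j$ only. This is exactly where the defining property of the exchange Laurent polynomials enters: in $\hat{F}_j=F_j/\bigl(\prod_{k\ne j}x_k^{a_k}\bigr)$ the exponent $a_k$ is \emph{maximal} with $F_k^{a_k}$ dividing $F_j|_{x_k\leftarrow F_k/x}$ in the relevant Laurent ring, and this maximality is tuned precisely so that, after the substitution and cancellation of common factors prescribed in Definition~\ref{def:LPmutation}, every expression that ought to be polynomial indeed is, while the coprimality imposed there forces any surviving denominator to involve only the exchanged variables. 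Unwinding these substitutions using the involutivity of $\mu_i$ and $\mu_j$ then yields the localized claim.

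The genuinely hard point is not any single identity but this denominator-tracking: showing that the chain of substitutions, cancellations, and re-homogenizations by Laurent monomials never introduces a forbidden variable into the denominator. Controlling it requires repeated use of unique factorization in $R[\bff{x}]$ together with the maximality in the definition of $\hat{F}$, and it is where Lam and Pylyavskyy's argument — like Fomin and Zelevinsky's — does its real work; since the statement is \cite[Theorem 5.1]{LP16}, one may in the end simply cite it.
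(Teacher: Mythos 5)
The paper gives no proof of this statement at all—it is quoted verbatim as \cite[Theorem 5.1]{LP16}—and your proposal, after sketching the caterpillar-style induction and denominator-tracking that Lam and Pylyavskyy actually carry out, likewise falls back on citing that theorem. Your outline is a fair summary of the cited argument, so the two ``proofs'' are essentially the same: an appeal to \cite{LP16}.
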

	Recall that a matrix is \defit{primitive} if the great common divisor of each column is 1. 
	\begin{proposition}[{\cite[Proposition 4.4]{LP16}}]\label{prop:LP}
		Let $(\bff{x},B)$ be a classical seed. Assume that $B$ is full rank and primitive. Then the cluster algebra $\cal{A}(\bff{x},B)$ is an LP algebra, and for every seed of $\cal{A}(\bff{x},B)$, cluster algebra seed mutation agrees with LP algebra seed mutations.
	\end{proposition}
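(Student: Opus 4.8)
The plan is to exhibit a concrete LP seed whose mutation class reproduces the cluster variables of $(\bff{x},B)$, and then to check, step by step, that LP mutation and classical mutation stay synchronized. I may assume there are no frozen variables (if there are, treat them as extra elements of the ground ring), so that $B$ is an $n\times n$ skew-symmetrizable matrix and $\bff{x}=\{x_1,\dots,x_n\}$. For each $k$ I would put $F_k:=\prod_{b_{jk}>0}x_j^{b_{jk}}+\prod_{b_{jk}<0}x_j^{-b_{jk}}$, the usual exchange binomial in direction $k$, and take as candidate the pair $(\bff{x},\bff{F})$ with $\bff{F}=\{F_1,\dots,F_n\}$. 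First I would check that this is a genuine LP seed: $x_k\notin F_k$ because $b_{kk}=0$; no variable divides $F_k$ because its two monomials have disjoint supports ($x_j$ can occur in at most one of them, as $b_{jk}$ has a definite sign); and $F_k$ is irreducible in $R[\bff{x}]$ because the exponent difference of its two monomials equals the $k$-th column $B(k)$, which is primitive by hypothesis, and a binomial with coprime monomial support and primitive exponent difference is irreducible — a unimodular change of variables of the ambient Laurent polynomial ring carries it to a monomial times $x_1+1$, and its content in $R[\bff{x}]$ is a unit since both coefficients equal $1$.

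Next I would identify the exchange Laurent polynomials and show that $\hat{F}_k=F_k$ for every $k$, so that no Laurent correction is needed. For $j\neq k$ one checks that the maximal power $a_k$ of $F_k$ dividing $F_j|_{x_k\leftarrow F_k/x}$ is $0$: if $x_k\notin F_j$ this reduces to $F_k\nmid F_j$, which holds because full rank prevents two columns of $B$ from being equal or opposite, so that $F_1,\dots,F_n$ are pairwise non-associate irreducibles; if $x_k\in F_j$, writing $F_j=x_k^{|b_{kj}|}P+Q$ with $P,Q$ coprime monomials free of $x_k$, the substitution produces (after clearing the power of $x$) the element $F_k^{|b_{kj}|}P+x^{|b_{kj}|}Q$, and no positive power of the non-monomial irreducible $F_k$ divides this because $F_k$ cannot divide the Laurent monomial $x^{|b_{kj}|}Q$ (a Newton polytope argument, as in Lemma \ref{lemma:polytope}). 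Since $\hat{F}_k=F_k$, the LP exchange relation $x_k x_k'=\hat{F}_k=F_k$ coincides with the classical one, so a single LP mutation $\mu_k(\bff{x},\bff{F})$ yields exactly the cluster $\bff{x}_k$ of $\mu_k(\bff{x},B)$.

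The heart of the argument — and the step I expect to be the main obstacle — is to show that the LP-mutated exchange polynomials $F_i'$ agree, up to a unit of $R$ (hence up to the equivalence relation on LP seeds), with the exchange binomials attached to $B_k=\mu_k(B)$. I would organize this by the sign of $b_{ki}$ (using skew-symmetrizability, so $b_{ik}$ and $b_{ki}$ have opposite signs): compute $\hat{F}_k|_{x_i\leftarrow 0}$, which is a single monomial since the term of $F_k$ carrying $x_i$ is killed; form $N_k$ and substitute it into $F_i$; remove the factor common to $G_i$ and $\hat{F}_k|_{x_i\leftarrow 0}$ as prescribed in step (b) of Definition \ref{def:LPmutation}; and reabsorb the Laurent monomial $M$. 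The delicate point is that the common factors that get removed account exactly for the $[\,\cdot\,]_+$ truncations appearing in the matrix mutation formula $b'_{jl}=b_{jl}+[b_{kl}]_+b_{jk}+b_{kl}[-b_{jk}]_+$, so that $F_i'$ is a unit times the $i$-th exchange binomial of $B_k$; carefully tracking monomial exponents through the substitution $x_k\leftarrow N_k$ and through this cancellation is the real work, and it is exactly where full rank (for the identities $\hat{F}=F$ and for distinctness of the binomials) and primitivity (for irreducibility) are used. Once one step is matched, an induction on the length of a mutation sequence finishes the proof, because the hypotheses persist: matrix mutation preserves the rank \cite[Lemma 3.2]{BFZ05}, and an elementary divisibility argument (a prime dividing every entry of a mutated column already divides every entry of the original one) shows it also preserves primitivity. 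Therefore every LP seed in $\mathcal{M}(\bff{x},\bff{F})$ is equivalent to the binomial LP seed of the correspondingly mutated matrix, whence $\mathcal{X}(\bff{x},\bff{F})=\mathcal{X}(\bff{x},B)$, $\cal{A}(\bff{x},\bff{F})=\cal{A}(\bff{x},B)$ as $R$-algebras, and the two seed-mutation rules coincide at every seed.
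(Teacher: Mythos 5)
The paper does not actually prove this proposition: it is imported verbatim as \cite[Proposition 4.4]{LP16}, so there is no in-paper argument to compare yours against. Your outline is, in substance, the proof from \cite{LP16}: take the exchange binomials as the $F_k$, use primitivity to get irreducibility, use full rank to get $\hat{F}_k=F_k$ and pairwise non-associateness, match one mutation step, and induct. The peripheral verifications you give are correct and complete: the binomial with primitive exponent difference and disjoint monomial supports is an irreducible element not divisible by any variable; the computation $F_j|_{x_k\leftarrow F_k/x}\rightsquigarrow F_k^{|b_{kj}|}P+x^{|b_{kj}|}Q$ does show $a_k=0$ since a non-monomial irreducible cannot divide a Laurent monomial; and your divisibility argument that matrix mutation preserves primitivity (a prime dividing the mutated column $l$ divides $b'_{kl}=-b_{kl}$ and hence every correction term, so it divides the original column) is sound, as is the appeal to \cite[Lemma 3.2]{BFZ05} for rank.

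The gap is that the one step carrying all the content of the proposition --- that the LP recipe of Definition \ref{def:LPmutation} (substitute $N_k$, strip the common factors with $\hat{F}_k|_{x_i\leftarrow 0}$, renormalize by a Laurent monomial $M$) returns exactly the exchange binomial of the mutated matrix $\mu_k(B)$, up to a unit of $R$ --- is announced but not carried out. You correctly identify it as ``the real work,'' but a proof has to do that work: one must actually compute $\hat{F}_k|_{x_i\leftarrow 0}$ (a single monomial), perform the substitution $x_k\leftarrow N_k$ in $F_i$ split according to the sign of $b_{ki}$, and verify that the cancellation in step (b) produces precisely the $[\,\cdot\,]_+$ truncations in $b'_{jl}=b_{jl}+[b_{kl}]_+b_{jk}+b_{kl}[-b_{jk}]_+$, together with the check that the resulting $H_i$ is again irreducible so that the mutated pair is a bona fide LP seed. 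Without this computation the claim that ``cluster seed mutation agrees with LP seed mutation'' is asserted rather than proved. As written, your text is a correct and well-organized plan whose execution coincides with the original argument of \cite{LP16}; either carry out that central computation or, as the paper does, cite \cite[Proposition 4.4]{LP16} for it.
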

	
	\begin{example}\label{ex:A3}
		Let $(\bff{x},\bff{F})$ be the LP seed giving in Example \ref{ex:A3_0}. Denote by $\cal{A}_{LP}$ the LP algebra with initial seed $(\bff{x},\bff{F})$, and by $\cal{A}_C$ the cluster algebra with the same initial seed. It is well-known that the only cluster variables of $\cal{A}_C$ are \[\left\{x_1,\,x_2,\,x_3,\,\frac{1+x_2}{x_1},\,\frac{1+x_2}{x_3},\,\frac{x_1+x_3}{x_2},\,\frac{x_1+x_3+x_2x_3}{x_1x_2},\,\frac{x_1+x_3+x_1x_2}{x_2x_3},\frac{x_1+x_3+x_1x_2+x_2x_3}{x_1x_2x_3}\right\}.\]
		However, the cluster variables given by LP mutations are the following 
		\[\left\{x_1,x_2,x_3,\frac{1+x_2}{x_1x_3},\frac{x_1+x_3}{x_2},\frac{x_1+x_3+x_2x_3}{x_1x_2},\frac{x_1+x_3+x_1x_2}{x_2x_3}\right\}.\]
		As was shown in \cite[Example 1.12]{P24}, the element $\frac{1+x_2}{x_1x_3}\in \cal{A}_{LP}$ is not contained in the cluster algebra $\cal{A}_C$, hence the classical seed mutations and LP mutations give rise to two different algebras.

	\end{example}
	
	The following proposition shows some similarities between cluster algebras and LP algebras.
	
	\begin{proposition}\label{prop:prop}
		Let $(\bff{x},\bff{F})$ be an LP seed, and $\cal{A}=\cal{A}(\bff{x},\bff{F})$ \textup(resp., $\cal{U}=\cal{U}(\bff{x},\bff{F})$\textup) be the LP algebra (resp., upper LP algebra) associated to $(\bff{x},\bff{F})$. \begin{enumerate}[label=\textup(\normalfont{\arabic*\textup)}]
			\item $\cal{A}^\times=\cal{U}^\times=R^\times$.
			\item $\cal{A}$ and $\cal{U}$ are FF-domains.
			\item Every cluster variable is a strong atom of $\cal{A}$ and $\cal{U}$.
			\item The following statements are equivalent.
			\begin{enumerate}[label=\textup(\normalfont{\alph*}\textup)]
				\item $\cal{A}$ \textup(resp. $\cal{U}$\textup) is factorial;
				\item every cluster variable is prime in $\cal{A}$ \textup(resp., $\cal{U}$\textup);
				\item the cluster variables $x_1,\ldots,x_n$ are prime in $\cal{A}$ \textup(resp., $\cal{U}$\textup).
			\end{enumerate}
		\end{enumerate} 
	\end{proposition}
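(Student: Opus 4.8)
The plan is to mirror, step by step, the arguments already developed for cluster algebras in \cite{P24} and for generalized cluster algebras in this paper, since the four assertions of Proposition~\ref{prop:prop} are the exact LP-analogues of Proposition~\ref{prop:unit} and Proposition~\ref{prop:FF}. The common engine for all four parts is the Laurent phenomenon: by the theorem of Lam--Pylyavskyy quoted above, $\cal{A}(\bff{x},\bff{F})\subseteq \cal{U}(\bff{x},\bff{F})\subseteq L_{\bff{y}}$ for every cluster $\bff{y}$, and each $L_{\bff{y}}$ is a factorial Laurent polynomial ring whose units are $\{r\,\bff{y}^{\bff{a}}\mid r\in R^\times,\ \bff{a}\in\ZZ^n\}$.

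For (1): an element $u\in\cal{U}^\times$ is a unit in every $L_{\bff{y}}$, hence for each cluster $\bff{y}$ it equals a Laurent monomial $r_{\bff{y}}\,\bff{y}^{\bff{a}_{\bff{y}}}$ times a unit of $R$. Comparing the expressions obtained from $\bff{x}$ and from each one-step mutation $\mu_k(\bff{x},\bff{F})=(\bff{x}_k,\cdot)$, and using that no variable divides an exchange polynomial and $F_k$ is non-constant (so $x_k'=\hat F_k/x_k$ is a genuine non-monomial Laurent polynomial in the other variables), one forces every exponent to vanish; this is exactly the computation carried out in the proof of Proposition~\ref{prop:unit}(1) and in \cite{GLS13}, and it gives $\cal{A}^\times=\cal{U}^\times=R^\times$. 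For (2): repeat verbatim the proof of Proposition~\ref{prop:FF}. The only structural input used there is that $\cal{A}\subseteq\cal{U}$ is sandwiched between $R[\bff{x}]$-type rings and the factorial rings $L_{\bff{x}_i}$; here one takes the starfish-type product $P=\prod_{i=0}^{n}L_{\bff{x}_i}$, observes that a divisor of $a$ in $\cal{A}$ (or $\cal{U}$) gives a divisor of $(a,\dots,a)$ in the factorial ring $P$, so $(a,\dots,a)$ has finitely many non-associated divisors; an infinite family of non-associated divisors of $a$ would then have to become associated in each $L_{\bff{x}_i}$ simultaneously, and by (1) this forces them to be associated in $\cal{A}$, a contradiction. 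Hence $\cal{A},\cal{U}$ are FF-domains, and in particular atomic.

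For (3): following \cite[Corollary 2.5]{P24}, fix a cluster variable $y$; it lies in some cluster $\bff{y}$, where it is a variable, hence prime in $L_{\bff{y}}$. If $y^N=a_1\cdots a_s$ in $\cal{A}$ (or $\cal{U}$), view this in $L_{\bff{y}}$: since $y$ is prime there, each $a_j$ is, up to a unit of $L_{\bff{y}}$, a power of $y$ times a Laurent monomial in the remaining variables; but $a_j\in\cal{A}$, and by (1) its only units available are in $R^\times$, so in fact $a_j$ is associated in $\cal{A}$ to a power $y^{e_j}$ with $\sum e_j=N$. Discarding units and trivial factors, the factorization of $y^N$ into atoms is forced to be $y\cdots y$, so $y$ is a strong atom. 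For (4): this is the LP-analogue of Proposition~\ref{prop:unit}(4) and of \cite[Corollary 1.23]{GELS19}. The implications (a)$\Rightarrow$(b)$\Rightarrow$(c) are formal (in a factorial atomic domain every atom is prime, and cluster variables are atoms by (3); (c) is a special case of (b)). For (c)$\Rightarrow$(a), if $x_1,\dots,x_n$ are prime then by a mutation/induction argument every cluster variable is prime: each new cluster variable $x_k'$ satisfies $x_kx_k'=\hat F_k\cdot(\text{monomial})$ with $\hat F_k$ a polynomial coprime to the remaining variables, so $x_k'$ inherits primality once one knows the ambient localization $L_{\bff{x}}$ is factorial and the old variables are prime; then $\cal{A}=R[x\mid x\in\cal{X}]$ is generated by primes together with the factorial $R[\bff{x}]$, forcing $\cal{A}$ (and the intermediate $\cal{U}$) to be factorial by Nagata-type localization arguments exactly as in \cite{GELS19}.

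The step I expect to be the main obstacle is the mutation bookkeeping hidden in (1) and in the (c)$\Rightarrow$(a) direction of (4): unlike the cluster and generalized cluster cases, LP mutation replaces $F_k$ by the exchange \emph{Laurent} polynomial $\hat F_k$ (obtained by dividing out the maximal monomials governed by the divisibility conditions $F_k^{a_k}\mid F_i|_{x_k\leftarrow F_k/x}$), and the mutated exchange polynomials $F_i'=MH_i$ are only defined up to units and involve the "common-factor removal" step of Definition~\ref{def:LPmutation}. Consequently one must check carefully that $x_k'=\hat F_k/x_k$ still fails to be a Laurent monomial (which uses that $F_k$ is a non-constant irreducible element and no $x_j$ divides $F_k$), and that the primality-propagation argument is not disturbed by these unit ambiguities. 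All of this is, however, routine given \cite{LP16,DL22} and the templates in \cite{GELS19,P24}; I would cite those sources for the detailed verifications and only spell out the parts where the Laurent-polynomial twist of LP mutation genuinely changes the computation.
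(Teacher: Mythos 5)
Your proposal matches the paper's proof, which is itself just a pointer to the same templates: \cite[Theorem 2.2]{GLS13} for the units, the arguments of Proposition~\ref{prop:FF} and Proposition~\ref{prop:unit} for the FF-property and strong atoms, and \cite[Corollary 1.23]{GELS19} for the factoriality criterion. The only point to watch is your parenthetical ``$F_k$ is non-constant'': over $R=\ZZ$ an exchange polynomial can be a constant irreducible element, but that case is disposed of just as easily since such an $F_k$ is a non-unit of $R$ and so $x_k'=\hat F_k/x_k$ still cannot be absorbed into a unit of $L_{\bff{x}}$.
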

	\begin{proof}
		For (1), refer to \cite[Theorem 2.2]{GLS13}. The proof of (2) and (3) follows the same strategy as in Proposition \ref{prop:FF} and in Proposition \ref{prop:unit} respectively. For (4), refer to \cite[Corollary 1.23]{GELS19}
	\end{proof}
	
	\begin{example}[Markov Seed]\label{ex:markov}
		Let $(\bff{x},\bff{F})$ be the LP seed given by $\bff{F}=\{x_2^2+x_3^2,x_1^2+x_3^2,x_2^2+x_1^2\}$. By \cite[Lemma 2.7]{LP16}, $\bff{F}=\hat{\bff{F}}$. If $(\bff{y},\bff{G})$ is a seed mutation-equivalent to $(\bff{x},\bff{F})$, then $\bff{G}=\{y_2^2+y_3^2,y_1^2+y_3^2,y_2^2+y_1^2\}$ and hence $\hat{\bff{G}}=\bff{G}$. This implies that the (upper) LP algebra associated to $(\bff{x},\bff{F})$ is the Markov (upper) cluster algebra. In particular, $\cal{A}_{LP}(\bff{x},\bff{F})$ is not factorial. In addition, this example shows that the conditions in Proposition \ref{prop:LP} are not necessary conditions.
	\end{example}
	
	An LP seed $(\bff{x},\bff{F})$ is \defit{sign-skew symmetric} if $x_i\in F_j$ if and only if $x_j\in F_i$. Note that if a seed is sign-skew symmetric, then any mutations of it is sign-skew symmetric as well. 
	
	\begin{theorem}[{\cite[Corollary 4.14]{DL22}}]\label{thm:LPstarfish}
		Let $(\bff{x},\bff{F})$ be a sign-skew symmetric LP seed, and $\cal{U}=\cal{U}(\bff{x},\bff{F})$ be the upper LP algebra associated to it. If $G_k=\hat{G}_k$ for every $k\in [1,n]$ and for every $(\bff{y},\bff{G})\in \fk{X}(\bff{x},\bff{F})$, then $\cal{U}=\bigcap_{i=0}^nL_{\bff{x}_i}$.
	\end{theorem}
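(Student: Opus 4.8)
The plan is to follow the structure of the proof of the classical Starfish Lemma from \cite{BFZ05}, adapted to the LP setting as was done in \cite{DL22}. The statement asserts that, under the hypothesis that $G_k = \hat{G}_k$ for every direction $k$ and every seed $(\bff{y},\bff{G})$ mutation-equivalent to $(\bff{x},\bff{F})$, the upper LP algebra $\cal{U}$ — which by definition is the intersection of $L_{\bff{y}}$ over \emph{all} seeds $(\bff{y},\bff{G}) \in \fk{X}(\bff{x},\bff{F})$ — already coincides with the much smaller intersection $\bigcap_{i=0}^n L_{\bff{x}_i}$ taken only over the initial cluster and its $n$ immediate neighbours. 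One inclusion is trivial: $\cal{U} \subseteq \bigcap_{i=0}^n L_{\bff{x}_i}$ since the latter intersection is over a subset of the clusters defining $\cal{U}$. So the content is the reverse inclusion $\bigcap_{i=0}^n L_{\bff{x}_i} \subseteq \cal{U}$.

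For the hard direction, the key is a local-move statement analogous to Theorem \ref{thm:star7}: if $(\bff{x},\bff{F})$ and its mutation $(\bff{x}',\bff{F}')$ in direction $k$ both satisfy the ``$G = \hat G$'' condition, then $\bigcap_{i=0}^n L_{\bff{x}_i} = \bigcap_{i=0}^n L_{\bff{x}'_i}$, i.e.\ the upper bound $S_{\bff{x}} := \bigcap_{i=0}^n L_{\bff{x}_i}$ is mutation-invariant. Granting this, one argues as follows. Fix $f \in \bigcap_{i=0}^n L_{\bff{x}_i}$. For an arbitrary seed $(\bff{y},\bff{G})$ reached by a mutation sequence $\mu_{k_m}\circ\cdots\circ\mu_{k_1}$ from $(\bff{x},\bff{F})$, the hypothesis (which is assumed for \emph{all} mutation-equivalent seeds) guarantees that the local-move invariance applies at every step of the sequence, so $S_{\bff{x}} = S_{\bff{y}}$, and in particular $f \in L_{\bff{y}}$. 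Since $(\bff{y},\bff{G})$ was arbitrary, $f \in \bigcap_{(\bff{y},\bff{G})} L_{\bff{y}} = \cal{U}$, which is the desired inclusion. This reduction mirrors exactly the passage from Theorem \ref{thm:star7} to its Corollary in Section \ref{section:3}.

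The proof of the local-move invariance is where the real work lies, and it is essentially the argument of \cite[Corollary 4.14]{DL22}. First I would establish the LP analogue of Proposition \ref{prop:star5}: under the sign-skew-symmetry hypothesis together with $G = \hat G$, the upper bound $S_{\bff{x}}$ can be rewritten as $\bigcap_{i} R[x_1, x_1', x_2^{\pm1},\ldots,x_{i-1}^{\pm1}, x_i, x_i', x_{i+1}^{\pm1},\ldots,x_n^{\pm1}]$ — an intersection of rings each of which ``sees'' the mutation in only two directions. This step uses the coprimality/primitivity-type consequences of $G = \hat G$ (namely that the $\hat G_k$ are genuine polynomials with no monomial factors, so the exchange Laurent polynomials coincide with the exchange polynomials), which is precisely what makes the intersection decompose this way. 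Then I would prove the LP analogue of Lemma \ref{lemma:4}: that $R[x_1, x_1', x_2, x_2', x_3^{\pm1},\ldots,x_n^{\pm1}] = R[x_1, x_1', x_2, x_2'', x_3^{\pm1},\ldots,x_n^{\pm1}]$, where $x_2''$ is the variable obtained by mutating in direction $2$ after having mutated in direction $1$. As in the generalized cluster algebra case, one freezes $x_3,\ldots,x_n$ to reduce to rank $2$, then manipulates the LP exchange relations: the subtlety here is that LP mutation involves the substitution $x_k \leftarrow N_k$ and the removal of common factors between $G_i$ and $\hat F_k|_{x_i \leftarrow 0}$, so the explicit rational-function identities relating the three exchange polynomials $F_1, F_1', F_2''$ are governed by Definition \ref{def:LPmutation} rather than by a clean binomial mutation rule. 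Combining these two lemmas as in Theorem \ref{thm:star7} yields $S_{\bff{x}} = S_{\bff{x}'}$.

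The main obstacle I expect is the rank-$2$ exchange-relation bookkeeping in the LP analogue of Lemma \ref{lemma:4}: tracking exactly how $F_1'$ and $F_2''$ arise from $F_1, F_2$ through the substitution and common-factor-cancellation steps of LP mutation, and verifying the resulting polynomial identities (the LP counterparts of the identities $r_2 r_3 = q_2 q_3 r_1^b$ and $h_k = g_k q_3 r_1^{r\beta_{12}}/r_2$ in Lemma \ref{lemma:4}) that make the two polynomial rings coincide. The hypothesis $G_k = \hat G_k$ everywhere is crucial precisely because it prevents the denominators introduced by passing from $F$ to $\hat F$ from creating spurious Laurent monomials that would break the equality; keeping careful control of this throughout is the delicate point. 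Since \cite{DL22} carries out this computation, I would cite their Corollary 4.14 for the details and restrict myself to explaining how it is applied to obtain the mutation-invariance and hence the stated equality $\cal{U} = \bigcap_{i=0}^n L_{\bff{x}_i}$.
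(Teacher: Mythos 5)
The paper offers no proof of this statement: it is quoted directly from \cite[Corollary 4.14]{DL22}, with only a remark that the sign-skew-symmetry hypothesis is implicit in that reference. Your outline --- the trivial inclusion $\cal{U}\subseteq\bigcap_{i=0}^nL_{\bff{x}_i}$, mutation-invariance of the upper bound via LP analogues of Proposition \ref{prop:star5} and Lemma \ref{lemma:4} (using that $G_k=\hat G_k$ holds at every seed), and deferral to \cite{DL22} for the rank-two exchange-relation computations --- is exactly the strategy of that reference and is consistent with how the paper treats the result.
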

	\begin{remark}
		The proof of the previous theorem in \cite[Corollary 4.14]{DL22}, which relies on \cite[Lemma 4.6]{DL22}, implicitly assumes a skew-sign symmetric seed, even though this assumption is not explicitly mentioned by the authors.
	\end{remark}
	LP algebras that are Krull domains have a finitely generated free abelian class group.
	\begin{theorem}
		Let $(\bff{x}, \bff{F})$ be an LP seed, and $A$ be either the LP algebra $\cal{A}(\bff{x},\bff{F})$ or the upper LP algebra $\cal{U}(\bff{x}, \bff{F})$. Assume that $A$ is a Krull domain, and let $r\in \NN_0$ denote the number of height-1 prime ideals that contain one of the variables $x_{1},\ldots,x_{n}$. Then the class group $\cal{C}(A)$ of $A$ is free abelian of rank $r-n$. In particular, each class contains exactly $|R|$ height-1 prime ideals.
	\end{theorem}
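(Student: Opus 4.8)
The plan is to reduce this statement to Theorem \ref{thm:classgroupgeneralcase} (the GELS realization-type criterion), exactly in the spirit of the proof of Theorem \ref{thm:classgroup1} but now tracking the fact that the exchange polynomials of an LP seed are, by definition, \emph{irreducible elements} of $R[\bff{x}]$. First I would invoke the Laurent phenomenon for LP algebras to get $A \subseteq L_\bff{x}$ and, more precisely, $A[x_1^{-1},\dots,x_n^{-1}] = R[x_1^{\pm1},\dots,x_n^{\pm1}]$; since the latter is a factorial Laurent polynomial ring over $R$, the hypotheses of Theorem \ref{thm:classgroupgeneralcase} are met with $D = R$. This immediately yields $\cal{C}(A) \cong \ZZ^r / \langle \bff{a}_1,\dots,\bff{a}_n\rangle$, where $\bff{p}_1,\dots,\bff{p}_r$ are the height-1 primes containing some $x_i$ and $x_i A = \vprod_{l=1}^r \bff{p}_l^{a_{il}}$, together with the statement that each class contains exactly $|R|$ height-1 primes (here using that $n\ge 1$ and, if $n=1$, a cardinality argument on the primes, which for LP algebras can be supplied as in the cluster-algebra case).

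The crux is then to show that the relation lattice $\langle \bff{a}_1,\dots,\bff{a}_n\rangle$ is a direct summand of $\ZZ^r$ of rank exactly $n$, so that the quotient is free of rank $r-n$. For this I would argue, as in the analogous cluster-algebra statement \cite[Theorem A]{GELS19} and \cite[Theorem 3.4]{P24}, that for each $i$ the cluster variable $x_i$ lies in a height-1 prime $\bff{q}_i$ that contains \emph{no other} cluster variable $x_j$, $j\ne i$: indeed $x_j$ is a unit in $L_{\bff{x}_i}$ for $j\ne i$, and since $A[x_i^{-1}] \supseteq$ the localization where all $x_j$ ($j\ne i$) are inverted, one sees that some irreducible factor of $\hat F_i$ in $L_{\bff{x}_i}$ cuts out a height-1 prime $\bff{q}_i$ of $A$ containing $x_i$ but none of the other $x_j$. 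Crucially, because $F_i$ is an \emph{irreducible} element of $R[\bff{x}]$ and $x_j \nmid F_i$ for all $j$, the Laurent polynomial $\hat F_i = F_i$ up to a unit (the exponents $a_k$ in its definition are forced by divisibility, and irreducibility of $F_i$ in $R[\bff{x}]$ controls these), so $\hat F_i$ is itself irreducible in $L_{\bff{x}_i}$ and $\bff{q}_i$ is unique among the primes over $x_i$. After reindexing so that $\bff{p}_i = \bff{q}_i$ for $i\in[1,n]$, the exponent matrix $(a_{il})$ has the block form $[\,I_n \mid \ast\,]$ — in fact $a_{ii}=1$ and $a_{ij}=0$ for $j\in[1,n]$, $j\ne i$ — hence the first $n$ coordinates of $\bff{a}_1,\dots,\bff{a}_n$ already generate $\ZZ^n$, so $\langle \bff{a}_1,\dots,\bff{a}_n\rangle$ is a rank-$n$ direct summand and $\cal{C}(A) \cong \ZZ^{r-n}$.

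I expect the main obstacle to be establishing rigorously that $\hat F_i$ is, up to a unit, an irreducible element of $R[\bff{x}]$ (equivalently, prime in $L_{\bff{x}_i}$), and that it is the \emph{only} irreducible factor of $x_i x_i'$ giving a height-1 prime of $A$ over $x_i$. In the cluster-algebra setting one uses that exchange binomials have no repeated or monomial factors; for LP seeds one must instead use the hypotheses baked into the definition of an LP seed — $F_i$ irreducible in $R[\bff{x}]$, $x_j \nmid F_i$, $F_i$ independent of $x_i$ — and the explicit recipe defining $\hat F_i$, to conclude that passing from $F_i$ to $\hat F_i$ only multiplies by a monomial in variables $\ne x_i$, which is a unit in $L_{\bff{x}_i}$. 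A subtlety worth flagging is whether one needs a structural result guaranteeing that every height-1 prime of $A$ over $x_i$ becomes height-1 in some $L_{\bff{y}}$ (the LP analogue of Proposition \ref{thm:height1primes} / Corollary \ref{cor:boh}); if such a statement is available for LP algebras — and the excerpt's Theorem \ref{thm:LPstarfish} plus the techniques of \cite[Theorem 3.6]{P24} suggest it is — then the argument goes through verbatim. Finally, the "$|R|$ height-1 primes per class" clause is inherited directly from Theorem \ref{thm:classgroupgeneralcase} once one checks the auxiliary hypothesis ($n\ge 2$, or $n=1$ with enough primes in $R$), which holds automatically when $R$ is an algebraically closed field and otherwise follows from the same counting argument as in the cluster-algebra case.
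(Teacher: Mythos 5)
Your proposal takes the route the paper intends: the paper's own proof is just a pointer to \cite[Theorem 3.4]{P24}, and what you write is essentially a reconstruction of that argument adapted to LP seeds, so the overall strategy (reduce to Theorem \ref{thm:classgroupgeneralcase} with $D=R$, then show the exponent matrix contains an identity block, so that $\langle\bff{a}_1,\dots,\bff{a}_n\rangle$ is a rank-$n$ direct summand) is correct and does yield the theorem. One claim should be corrected, though. Your assertion that $\fk{q}_i$ is ``unique among the primes over $x_i$'' is false in general: without a starfish-type hypothesis ($\cal{U}=\bigcap_k L_{\bff{x}_k}$, cf.\ Theorem \ref{thm:LPstarfish}), a height-1 prime of $A$ containing $x_i$ need not survive as a height-1 prime of $L_{\bff{x}_i}$, and if $\fk{q}_i$ really were the only prime over $x_i$ you would get $r=n$ and a trivial class group, contradicting the very statement you are proving (that case is the separate factoriality theorem at the end of Section \ref{section:LP}). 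Fortunately uniqueness is never used: all you need is that $\fk{q}_i:=F_iL_{\bff{x}_i}\cap A$ is \emph{some} height-1 prime with $v_{\fk{q}_i}(x_i)=1$ and $v_{\fk{q}_i}(x_j)=0$ for $j\ne i$ (the latter because $x_j$ is a unit in $L_{\bff{x}_i}$, the former because $\hat F_i$ is prime there and $x_i'$ is a unit), which already gives the block form $[\,I_n\mid \ast\,]$ after reindexing; the remaining primes over the $x_i$ merely populate the $\ast$ block, whose entries are irrelevant. For the same reason, the LP analogue of Proposition \ref{thm:height1primes} that you flag as a possible obstacle is not actually needed for this theorem. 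Finally, the reason $\hat F_i$ is a unit multiple of $F_i$ in $L_{\bff{x}_i}$ is simpler than your parenthetical suggests: whatever the exponents $a_k$ in its definition turn out to be, the denominator is a monomial in the variables $x_j$ with $j\ne i$, all of which are units in $L_{\bff{x}_i}$; primality of $F_i$ in $L_{\bff{x}_i}$ then follows since $R[\bff{x}]$ is a UFD, $F_i$ is an irreducible element not involving $x_i$, and no variable divides it.
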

	\begin{proof}
		The proof follows the same line of \cite[Theorem 3.4]{P24}.
	\end{proof}
	\begin{theorem}
		Let $\cal{U}$ be an upper LP algebra. If $\cal{U}=\bigcap_{k=0}^n L_{\bff{x}_k}$ for some cluster $\bff{x}$, then $\cal{U}$ is factorial.
	\end{theorem}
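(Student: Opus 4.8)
The plan is to mimic the structure of the proof of the factoriality criterion for generalized upper cluster algebras that appears earlier (the theorem whose statement reads ``$\cal{U}$ is factorial if and only if the exchange polynomials $f_i=y_iy_i'$ are irreducible''), adapted to the LP setting. The key point is that in the LP framework the exchange polynomials $F_1,\ldots,F_n$ are \emph{by definition} irreducible elements of $R[\bff{x}]$, so the obstruction that produces torsion (or even non-trivial free rank) in the generalized cluster algebra case — exchange polynomials with repeated or multiple irreducible factors — simply cannot occur. Thus one expects the class group to vanish outright.

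First I would invoke the hypothesis $\cal{U}=\bigcap_{k=0}^n L_{\bff{x}_k}$ to put $\cal{U}$ in a form where Theorem \ref{thm:classgroupgeneralcase} applies: we have $\cal{U}[\bff{x}^{-1}]=L_\bff{x}=R[x_1^{\pm1},\ldots,x_n^{\pm1}]$, which is a factorial Laurent polynomial ring over $R$, so the class group is $\ZZ^r/\langle \bff{a}_1,\ldots,\bff{a}_n\rangle$ where $\fk{p}_1,\ldots,\fk{p}_r$ are the height-$1$ primes of $\cal{U}$ containing one of $x_1,\ldots,x_n$ and $x_i\cal{U}={\vprod}_{j=1}^r \fk{p}_j^{a_{ij}}$. (Here I am also using, as in the generalized cluster algebra case, that $\cal{U}$ is a Krull domain — this should be argued exactly as for $S_\bff{x}$, since it is a finite intersection of the Laurent polynomial Krull domains $L_{\bff{x}_k}$ inside a common quotient field; if the paper wants this stated explicitly it belongs at the start of the proof.) Next I would establish the LP analogue of Corollary \ref{cor:boh}: for each $i\in[1,n]$, the height-$1$ primes of $\cal{U}$ containing $x_i$ are exactly the ideals $r L_{\bff{x}_i}\cap\cal{U}$ as $r$ ranges over the distinct irreducible factors of the exchange Laurent polynomial $\hat F_i$ (equivalently of $F_i$, since $F_i$ and $\hat F_i$ differ by a monomial and no variable divides $F_i$). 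This requires the LP analogue of Proposition \ref{thm:height1primes} — that every height-$1$ prime of $\cal{U}$ becomes height-$1$ in some $L_{\bff{x}_k}$ — whose proof goes through verbatim as in \cite[Theorem 3.6]{P24} once one has the displayed intersection description; and it uses that $x_i$ is a unit in $L_{\bff{x}_j}$ for $j\ne i$ together with the relation $x_ix_i'=\hat F_i$, forcing any such prime to become height-$1$ precisely in $L_{\bff{x}_i}$.

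Now I would apply the key structural fact: in an LP seed $F_i$ is an irreducible element of $R[\bff{x}]$, hence $\hat F_i$ (which equals $F_i$ divided by a monomial not dividing $F_i$) is, up to a unit, a single irreducible element of the factorial ring $L_{\bff{x}_i}$. Therefore there is exactly one height-$1$ prime of $\cal{U}$ containing $x_i$, namely $\fk{p}_i:=F_i L_{\bff{x}_i}\cap\cal{U}$, and it appears with multiplicity one in the divisorial factorization of $x_i$ (since $x_i'=\hat F_i/x_i$ already lies in $L_{\bff{x}_i}$, the valuation of $x_i$ at $\fk{p}_i$ is $1$, and $x_i$ is a unit at every $\fk{p}_j$ with $j\ne i$, while by definition no variable divides $F_j$ so $x_i$ is also a unit at any prime arising from $F_j$). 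Consequently $r=n$, the $\fk{p}_i$ are pairwise distinct, and $\bff{a}_i=e_i$ is the $i$-th standard basis vector. Hence $\langle\bff{a}_1,\ldots,\bff{a}_n\rangle=\ZZ^n$ and $\cal{C}(\cal{U})\cong\ZZ^n/\ZZ^n=0$, so $\cal{U}$ is factorial.

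The main obstacle I anticipate is not in the final linear-algebra step, which is immediate once the geometry is set up, but in rigorously transporting Proposition \ref{thm:height1primes} and Corollary \ref{cor:boh} to the LP world — in particular verifying that the pairwise-distinctness of the $\fk{p}_i$ and the correct multiplicity computation are not spoiled by the way different exchange polynomials $F_j$ for $j\ne i$ might involve $x_i$. This is handled by the LP axiom that $x_j\nmid F_i$ for all $i,j$: it guarantees $x_i$ is a unit in each $L_{\bff{x}_j}$ with $j\ne i$ and that no irreducible factor of any $F_j$ is associate to $x_i$ in $L_{\bff{x}_j}$, so all the relevant primes are genuinely distinct and the divisor of $x_i$ is supported only at $\fk{p}_i$. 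Once this bookkeeping is carried out carefully the proof closes.
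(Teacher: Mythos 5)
Your argument is essentially correct, but it is a genuinely different (and heavier) route than the one the paper takes. The paper does not touch class groups at all: it invokes Proposition \ref{prop:prop}(4), which reduces factoriality of $\cal{U}$ to primality of the initial cluster variables $x_1,\dots,x_n$ in $\cal{U}$; it then observes that divisibility by $x_k$ in $\cal{U}=\bigcap_{i=0}^n L_{\bff{x}_i}$ is detected entirely in $L_{\bff{x}_k}$ (because $x_k$ is a unit in every other factor of the intersection, so $a/x_k\in L_{\bff{x}_k}$ already forces $a/x_k\in\cal{U}$), and finally notes that $x_k$ is associate in $L_{\bff{x}_k}$ to $\hat F_k$ via the exchange relation $x_kx_k'=\hat F_k$, hence prime there by the irreducibility of $F_k$. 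That argument is a few lines, needs no identification of $\cal{U}$ as a Krull domain, and needs no inventory of height-$1$ primes. Your route instead passes through Theorem \ref{thm:classgroupgeneralcase} and requires you to first establish the LP analogues of Proposition \ref{thm:height1primes} and Corollary \ref{cor:boh}; you correctly flag this as the delicate point, and the transport does go through (the key inputs --- $x_i$ a unit in $L_{\bff{x}_j}$ for $j\ne i$, $\hat F_i$ associate to the irreducible $F_i$ in $L_{\bff{x}_i}$, and $\cal{U}[\bff{x}^{-1}]=L_{\bff{x}}$ --- are all as you say), so your proof closes. What your approach buys is more information: it locates all height-$1$ primes over the cluster variables and computes $\cal{C}(\cal{U})=0$ explicitly, essentially reproving the paper's LP class-group theorem (rank $r-n$ with $r=n$ here) in the process. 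What it costs is precisely that extra infrastructure, which the paper's direct primality argument makes unnecessary for the factoriality statement alone. One small caution if you were to write your version out in full: when $R=\ZZ$ an exchange polynomial $F_i$ may be an irreducible \emph{element} without being an irreducible \emph{polynomial} (e.g.\ a prime constant), so the step ``$F_i$ remains irreducible, hence prime, in the factorial ring $L_{\bff{x}_i}$'' should be phrased via irreducible elements and the fact that no variable divides $F_i$, exactly as the paper's remark on this distinction suggests.
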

	\begin{proof}
		By Proposition \ref{prop:prop}, it is enough to show that $x_k$ is prime in $\cal{U}$ for every $k\in [1,n]$. We claim that it suffices to show that $x_k$ is prime in $L_{\bff{x}_k}$. Assume that $x_k$ is prime in $L_{\bff{x}_k}$, and suppose by contradiction that there exist $a,b\in \cal{U}$ such that $x_k\mid_{U} ab$, but $x_k$ does not divide $a$ and $b$ in $\cal{U}$. Since $x_k$ is a unit in $L_{\bff{x}_i}$ for every $i\in [1,n]\setminus\{k\}$, then $x_k$ does not divide $a,b$ in $L_{\bff{x}_k}$. Therefore we obtain a contradiction, because clearly $x_k\mid_{L_{\bff{x}_k}} ab.$	
		Now, the primeness of $x_k$ follows from the exchange relation and the irreducibility of $F_k$.
	\end{proof}
	
		\begin{remark}
		A necessary condition for a (upper) cluster algebra to be factorial is that exchange polynomials must be pairwise distinct and irreducible elements. From one hand exchange Laurent polynomials of an LP seed are irreducible by construction. Moreover, the authors did not find examples of an LP seed with two equal exchange Laurent polynomials. 
		\end{remark}

\section*{Acknowledgments}
The author would like to thank the anonymous referee for their careful reading and insightful comments. In particular, their suggestion to include more illustrative examples has greatly improved the clarity and presentation of the manuscript.

	\bibliographystyle{alpha}
	\bibliography{cluster.bib}
\end{document}